\newcommand{\ty}{\tilde{y}}
\newcommand{\ta}{\tilde{a}}
\newcommand{\WidestPart}{\ensuremath{p_j^{e_j-1}\left(p_j-1-\Legendre{p}{p_j}\right)}}%
\newcommand{\FixedSize}[1]{\makebox[\widthof{\WidestPart}][l]{\ensuremath{#1}}}%
\DeclareMathOperator{\avg}{avg}
\renewcommand{\theequation}{(\arabic{section}.\arabic{equation})}
\title{\Large Murmurations of Hecke $L$-Functions of Imaginary Quadratic Fields}
\author{Zeyu Wang}
\date{March 2025}
\begin{document}
\maketitle


\begin{abstract}
We calculate the murmuration density for the family of Hecke $L$-functions of imaginary quadratic fields associated to non-trivial characters. This density exhibits a universality property like Zubrilina's density for the murmurations of holomorphic modular forms. We show all murmuration functions obtained by averaging over the family with a compactly supported smooth weight function has asymptotics compatible with the 1-level density conjecture of Katz and Sarnak. The novelty of the murmurations of this family of $L$-functions is its pronounced almost periodic feature, which allows one to describe this murmuration without averaging over primes, and which is non-existent or previously unnoticed for other families.
\end{abstract}

\section{Introduction}\label{chapter:introduction}
\subsection{Murmurations of Elliptic Curves and Other Families}\label{section:intro_1}
Recently, He, Lee, Oliver, and Pozdnyakov (see \cite{HLOP}) used machine learning techniques to discover an oscillation pattern in the averages of Frobenius traces of elliptic curves of fixed rank with conductor in a bounded interval (see Figure \ref{fig:HLOP}). This phenomenon is termed ``murmurations'' due to its visual similarity to bird flight patterns (see Figure \ref{fig:birds}). 

\begin{figure}[H]
    \centering
    \begin{subfigure}{0.48
    \textwidth}
        \includegraphics[width=\textwidth]{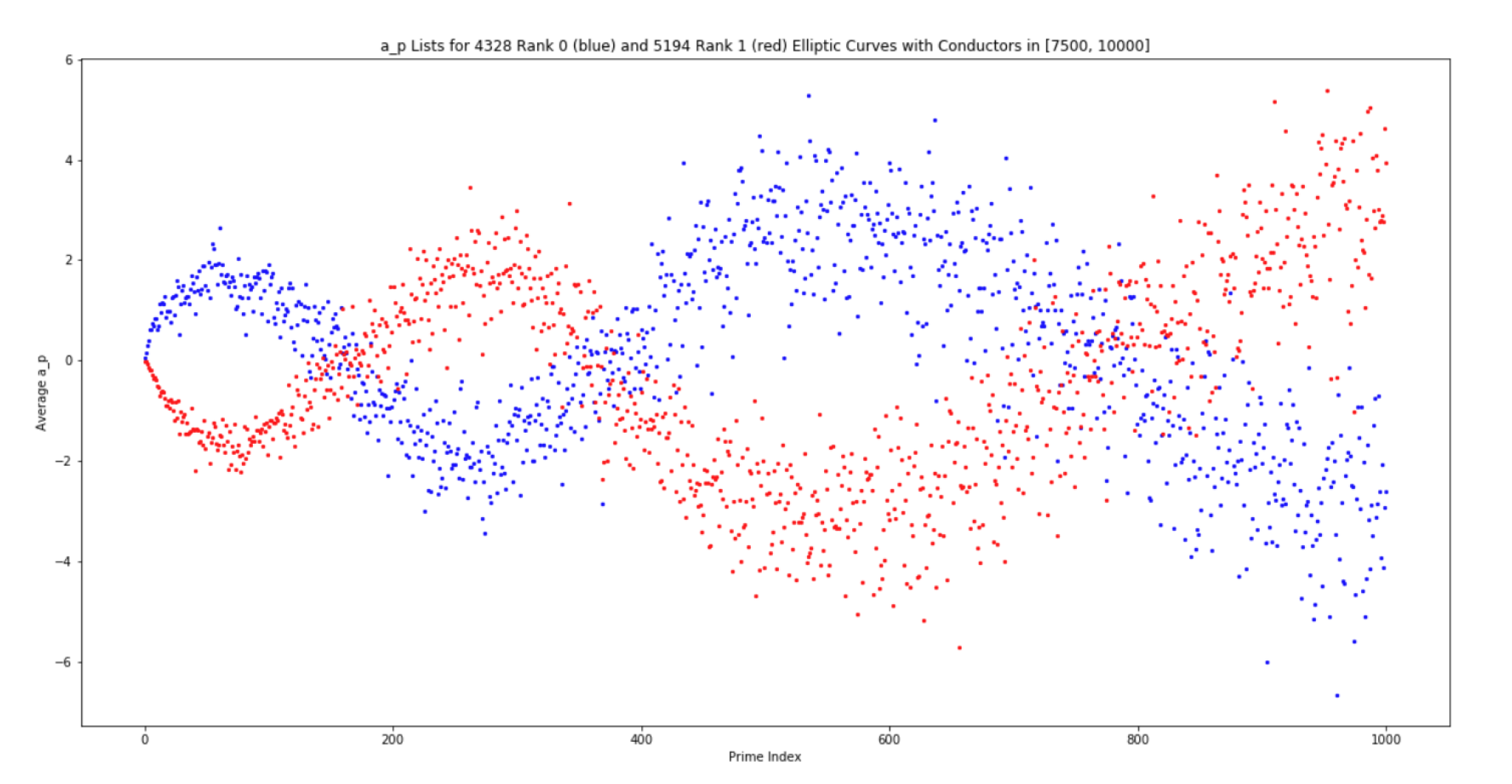}
        \caption{Frobenius trace averages of elliptic curves of fixed rank with conductor in a particular bounded interval.}
        \label{fig:HLOP}
    \end{subfigure}\hfill
    \begin{subfigure}{0.48\textwidth}
        \includegraphics[width=\textwidth]{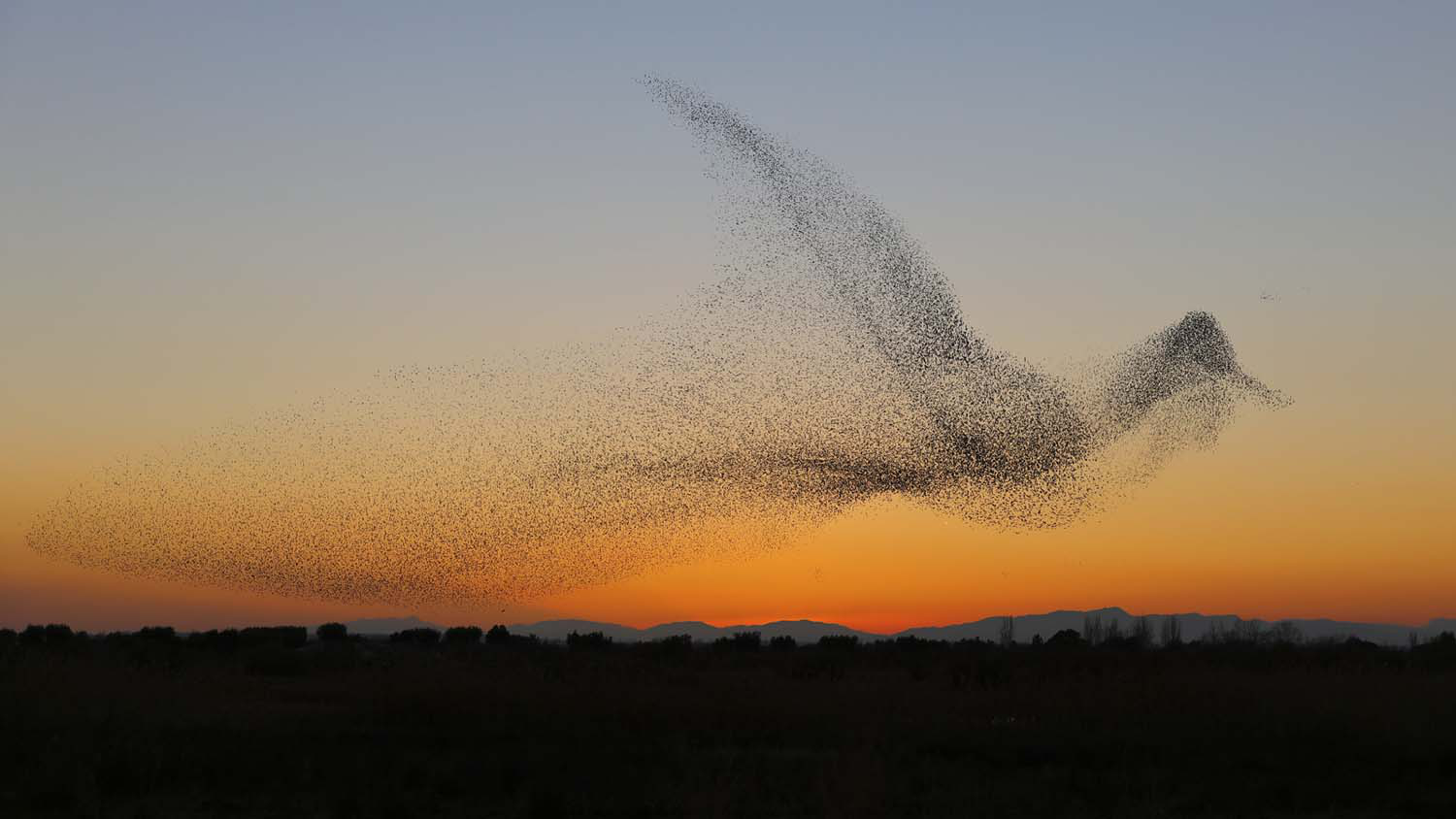}
        \caption{Birds.}
        \label{fig:birds}
    \end{subfigure}
    \caption{Oscillatory pattern observed in \cite{HLOP} and the murmurations of starlings.}
    \label{fig:enter-label}
\end{figure}

Sutherland and the authors (see \cite{HLOPS}) then discovered this phenomenon in more families of arithmetic $L$-functions, for example, those associated to weight-$k$ holomorphic cusp forms for $\Gamma_0(N)$ with conductor in $[N,cN]$ and a fixed root number $\varepsilon(f)$. The average of $a_f(p)$ over this family for a single prime $p\sim X$ converges to a continuous function of $p/N$ (see Figure \ref{fig:newforms}), and this has been proved by Zubrilina in \cite{Zubrilina}. Since then, murmuration phenomena in several other families of $L$-functions have been computed, such as that of Dirichlet $L$-functions in \cite{LOP_dirichlet}, that of modular forms in the weight aspect in \cite{BBLLD_modular_forms}, and that of weight-0 level-1 Maass forms in \cite{BLLDSHZ_Maass}.

\begin{figure}[H]
    \centering
    \includegraphics[width=\textwidth]{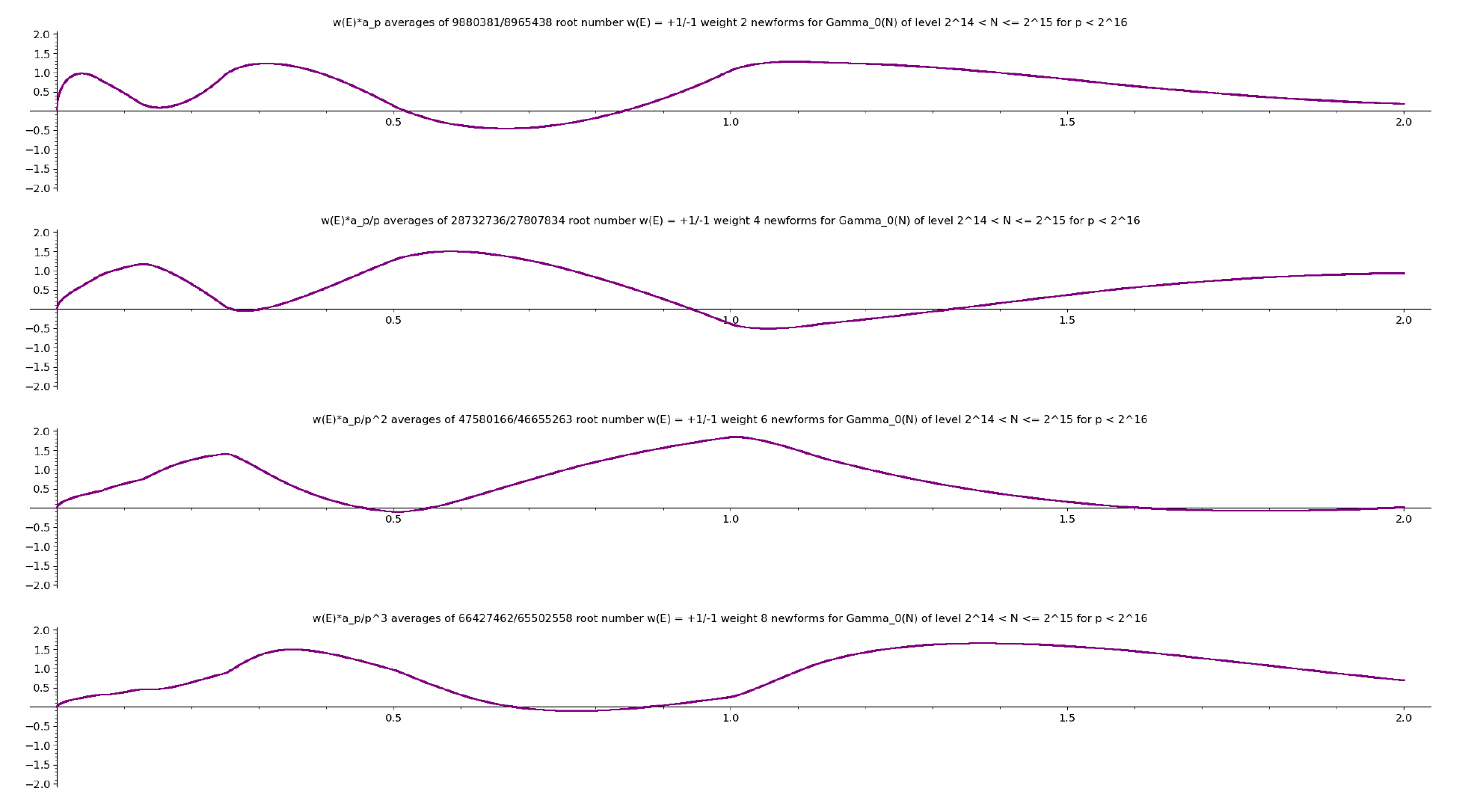}
    \caption{Frobenius trace averages of newforms of fixed weight with level in a particular bounded interval.}
    \label{fig:newforms}
\end{figure}

In this paper, we compute a similar murmuration phenomenon for another natural family of arithmetic $L$-functions, namely those associated to non-trivial Hecke characters on imaginary quadratic fields. The overall treatment and presentations of the main results will be most similar to those in \cite{Zubrilina}, but we will highlight several crucial differences.

\subsection{General Formulation of Murmurations}
Here we formulate the murmuration phenomenon in general, following Sarnak's letter \cite{SarnakLetter} to Sutherland and Zubrilina.

Let $\msF$ be a family of self-dual arithmetic $L$-functions ordered by conductor. For $f\in\msF$, let $N_f$ be the conductor of $f$, $\varepsilon(f)$ be the root number of $f$, and $a_f(p)$ be its Frobenius trace at $p$, normalized so that Ramanujan's conjecture states that $\abs{a_f(p)}\ll\sqrt{p}$. Note that the self-dual condition implies that $\varepsilon(f)=\pm1$ and $a_f(p)\in\RR$. We are interested in the weighted average of the $a_f(p)$ among those $f\in\msF$ with conductor $N_f$ multiplicatively close to $N$ (a ``vertical average''). Precisely, for any compactly supported smooth weight function $\Phi:(0,\infty)\to[0,\infty)$, we look for a function $M_\Phi(\xi)$ such that \begin{equation}
    G_\Phi(p,N):=\frac{\sum_{f\in\msF}a_f(p)\varepsilon(f)\Phi\left(\frac{N_f}{N}\right)}{\sum_{f\in\msF}\Phi\left(\frac{N_f}{N}\right)}=M_\Phi\left(\frac{p}{N}\right)+R_\Phi(p,N)
\end{equation} and such that the error term satisfies is negligible after averaging over primes in short intervals, i.e. \begin{equation}
    \frac{\sum_{p\in[P,P+H]}R_\Phi(p,N)}{\sum_{p\in[P,P+H]}1}=o(1)
\end{equation} where $H=o(N)$ is some parameter. We then say $M_\Phi(\xi)$ is the \textbf{murmuration function for $\msF$ with weight $\Phi$}. 

In other words, if we double sum over $f\in\msF$ and $p$, then the weighted average of $a_f(p)$ converges to $M_\Phi(p/N)$ as $p,N\to\infty$ with $\xi=p/N$ fixed. After switching the order of averaging, these double averages are related via explicit formulae to the 1-level densities of the zeros of $f\in\msF$, which are conjectured by Katz-Sarnak in \cite{Katz-Sarnak} to depend on the symmetry type of the family $\msF$. Based on the 1-level density conjecture, one expects that \begin{align}
    \Exp_{p\sim N^a}[G_\Phi(p,N)]&\to\begin{cases}
        0 &\text{if }a<1\\
        -\frac{1}{2} &\text{if }a>1
    \end{cases} &&(\text{if $\msF$ has $\Sp(\infty)$ symmetry})\label{eqn:Sp_1_level_density}\\
    \Exp_{p\sim N^a}[G_\Phi(p,N)]&\to\begin{cases}
        0 &\text{if }a<1\\
        \frac{1}{2} &\text{if }a>1
    \end{cases} &&(\text{if $\msF$ has $\O(\infty)$ symmetry}) \label{eqn:O_1_level_density}
\end{align} for any reasonable weight function $\Phi$ (see \cite{SarnakLetter}, p.5, Equation 7'). Thus we expect \begin{equation}\label{eqn:1_level_density}
    M_\Phi(0)=0\AND \lim_{\xi\to\infty}M_\Phi(\xi)=\begin{cases}-\frac{1}{2} &\text{if $\msF$ has $\Sp(\infty)$ symmetry,}\\\frac{1}{2} &\text{if $\msF$ has $\O(\infty)$ symmetry.}\end{cases}
\end{equation} Therefore, the murmuration function $M_\Phi(\xi)$ on $\RR_{\geq0}$ interpolates the phase transition of the 1-level density in the critical range $p\sim N$, corresponding to the discontinuity at $a=1$ of \ref{eqn:Sp_1_level_density} and \ref{eqn:O_1_level_density}.

The murmuration function depends on the choice of the weight function $\Phi$, and the more universal object is the ``murmuration density.'' In fact, if as $X,Y,p\to\infty$ with $Y=o(X)$ we have \begin{equation}\label{eqn:murmuration_density_defn}
    G(p,X,Y):=\frac{\sum_{\substack{f\in\msF\\N_f\in[X,X+Y]}}a_f(p)\varepsilon(f)}{\sum_{\substack{f\in\msF\\N_f\in[X,X+Y]}}1}=M\left(\frac{p}{X}\right)+R(p,X,Y)
\end{equation} with $R(p,X,Y)$ negligible after averaging over primes in intervals of length $H=o(X)$, then from $M$ we can recover $M_\Phi$ for any $\Phi$ by \begin{equation}\label{eqn:murmuration_function_and_density_relation}
    M_\Phi(\xi)=\frac{\int_0^\infty M(\xi/u)\Phi(u)u^\delta\frac{du}{u}}{\int_0^\infty\Phi(u)u^\delta\frac{du}{u}}.
\end{equation} In this case, we call $M$ the \textbf{murmuration density for $\msF$}, and this is the object of central interest. Here $\delta$ is the \textbf{conductor density} of the family $\msF$, which we assume satisfies \begin{equation}
    \#\{f\in\msF:N_f\leq N\}\asymp N^\delta.
\end{equation} For example, the conductor density is $\delta=2$ for the family of interest in \cite{Zubrilina}, and is $\delta=1$ for the families considered in \cite{LOP_dirichlet}, \cite{BBLLD_modular_forms}, and \cite{BLLDSHZ_Maass}. 

It is a natural question how much one needs to average over $p$ to obtain the murmuration density in \ref{eqn:murmuration_density_defn}, i.e. how short one may take $H=o(X)$ such that $$\Exp_{p\in[P,P+H]}R(p,X,Y)=o(1).$$ It is hypothesized in \cite{SarnakLetter} that one may take any $H=X^\gamma$ such that $\gamma+\delta>1$, where $\delta$ is again the conductor density. For example, in \cite{Zubrilina} one has $\delta=2$ and $\gamma=0$; that is, the family is sufficiently large such that no averaging over $p$ is required at all to obtain the murmuration density. 

In this paper, we compute the murmuration density for the family $\msF$ of $L$-functions associated to non-trivial Hecke characters on the class groups of imaginary quadratic fields $\QQ(\sqrt{-D})$ with $D>3,D\equiv3\Mod{4}$ and square-free. It is conjectured in \cite{Serre} that these dihedral forms make up the majority of holomorphic modular forms of weight 1. We also show that the murmuration function $M_\Phi$ for any compactly supported smooth weight function $\Phi$ has the asymptotic behavior in accordance with the 1-level density conjecture, i.e. $M_\Phi(\xi)\to-\frac{1}{2}$ as $\xi\to\infty$.

Interestingly, although the conductor density for our family $\msF$ is $\delta=\frac{3}{2}$, we still need to average a little over $p$ in order to recover the murmuration density as defined by \ref{eqn:murmuration_density_defn} in \cite{SarnakLetter}. However, without the averaging over $p$ we obtain an \textit{almost periodic} murmuration density, which is arguably more interesting and revealing in its own right.

\subsection{Properties of the Family $\msF$}
Here we collect some basic properties of the family $\msF$ (as well as the Hecke $L$-functions associated to trivial characters). 

Let $K=\QQ(\sqrt{-D})$ be an imaginary quadratic field of discriminant $-D\equiv1\Mod{4},D>3$, let $\psi$ be a (possibly trivial) character on $\Cl(K)$, and let $f(s)=L(s,\psi)$ be the corresponding Hecke $L$-function: \begin{equation}
    L(s,\psi)=\sum_{\mfa}\psi(\mfa)(N\mfa)^{-s}=\prod_\mfp\left(1-\psi(\mfp)(N\mfp)^{-s}\right)^{-1},
\end{equation} where $\mfa$ runs over all the integral ideals of $\mcO_K$ and $\mfp$ runs over all the prime ideals. Writing $f$ as a Dirichlet series $f(s)=\sum_{n=1}^\infty a_f(n)n^{-s}$, then the Frobenius traces are given by \begin{equation}\label{eqn:Frob_traces}
    a_f(p)=\sum_{\mfp,N\mfp=p}\psi(\mfp)=\begin{cases}
    0 &\Legendre{-D}{p}=-1\Leftrightarrow p\text{ inert},\\
    \psi(\mfp)+\psi(\mfpbarr) &\Legendre{-D}{p}=1\Leftrightarrow p\text{ split},p\mcO_K=\mfp\mfpbarr,\\
    \psi(\mfp) &\Legendre{-D}{p}=0\Leftrightarrow p\text{ ramified},p\mcO_K=\mfp^2.
\end{cases}
\end{equation} The completed function $$\Lambda(s,\psi)=\left(\frac{\sqrt{D}}{2\pi}\right)^s\Gamma(s)L(s,\psi)$$ is entire, with only possibly a simple pole at $s=1$ when $\psi=\ONE$ is trivial with residue $$\res_{s=1}\Lambda(s,\ONE)=\frac{\sqrt{D}}{2\pi}L(1,\chi_{-D})=\frac{1}{2}h(-D),$$ where $\chi_{-D}=\Legendre{-D}{\cdot}$ is the Kronecker symbol and $h(-D)=\#\Cl(K)$ is the class number. Moreover, these $L$-functions satisfy the functional equation $$\Lambda(s,\psi)=\Lambda(1-s,\psi),$$ so members $f\in\msF$ have root number $\varepsilon(f)=1$ always, which differs from the family considered in \cite{Zubrilina}.

The class number formula along with the bound $\abs{D}^{-\epsilon}\ll L(1,\chi_D)\ll\abs{D}^{\epsilon}$ implies that the family $\msF$ has conductor density $\delta=\frac{3}{2}$.\footnote{Throughout this paper, implied constants might depend on $\epsilon$ and we omit this from notation. The bound $L(1,\chi_D)\gg\abs{D}^{-\epsilon}$ is Siegel's Theorem, and $L(1,\chi_{D})\ll\log\abs{D}$ follows from P\'olya-Vinogradov.} This is a second distinction from \cite{Zubrilina}, where the conductor density is $\delta=2$.

Finally, the family $\msF$ is expected to have a symplectic $\Sp(\infty)$ symmetry (see \cite{Fouvry-Iwaniec}). Thus from \ref{eqn:1_level_density} we expect \begin{equation}\label{eqn:asymps_of_murmuration_function_F_only}
    M_\Phi(0)=0\AND\lim_{\xi\to\infty}M_\Phi(\xi)=-\frac{1}{2}
\end{equation} for the family $\msF$ for any reasonable weight function $\Phi$. This is a third distinction from \cite{Zubrilina}, where the family has an $\O(\infty)$ symmetry. 

\counterwithout{thm}{section}
\subsection{Results: Murmuration Density for the Family $\msF$}
Recall that $\msF$ is the family of $L$-functions associated to non-trivial Hecke characters on class groups of imaginary quadratic fields $\QQ(\sqrt{-D})$ with $D\equiv3\Mod{4},D>3$ and square-free, and $a_f(p)$ is the Frobenius trace of $f\in\msF$ at $p$ (with $\abs{a_f(p)}\leq2$, so $a_f(p)\sqrt{p}$ is the normalized trace).
\begin{thm}\label{thm:thm1}
Let $p,X,Y$ be parameters going to $\infty$ with $Y\sim X^{1-\delta_Y}$ and $p\ll X^{1+\delta_p}$ prime and $\delta_Y,\delta_p>0$, and assume that $\xi=p/X$ is bounded away from squares of half-integers as $p,X,Y\to\infty$, in the sense that there exists $\delta_\xi>0$ such that $\xi-y^2/4\geq\delta_\xi$ for all $y\in\NN$. The normalized Frobenius trace average over $f\in\msF$ with conductor $N_f\in[X,X+Y]$ is 
\begin{align}
    G(p,X,Y)&=\frac{\sum_{\substack{f\in\msF\\N_f\in[X,X+Y]}}a_f(p)\sqrt{p}}{\sum_{\substack{f\in\msF\\N_f\in[X,X+Y]}}1}\label{eqn:G(p,X,Y)_definition}\\
    &=c(p)\sum_{1\leq y<2\sqrt{\xi}}\delta_y(p)M_y(\xi)+M_-(\xi)+O_{\delta_\xi}\left(X^{-\delta_0+\epsilon}\right),\label{eqn:almost_periodic_murmuration}
\end{align}
where \begin{align}
    \xi&=p/X,\\
    \delta_0&=\min\left\{\frac{13-2\sqrt{37}}{42}-\frac{6}{7}\delta_Y-\frac{12}{7}\delta_p,\delta_Y-\delta_p\right\},\label{eqn:delta_0_definition}\\
    M_y(\xi)&=\frac{11\zeta(2)}{4A}\sqrt{\frac{\xi}{4\xi-y^2}}\,\vartheta(y),\label{eqn:M_y_definition}\\
    M_-(\xi)&=-\frac{11\pi}{12A}\sqrt{\xi},\label{eqn:M_-_definition}\\
    A&=\prod_p\left(1+\frac{p}{(p+1)^2(p-1)}\right),\label{eqn:A_definition}\\
    c(p)&=\frac{p+1}{3p}\prod_{\ell>2,\Legendre{p}{\ell}=1}\left(1-2\ell^{-2}-\frac{2\ell^{-3}}{1-\ell^{-2}}\right),\label{eqn:c(p)_definition}\\
    \delta_y(p)&=\begin{cases}
        \ONE\left\{\Legendre{p}{q}=1\right\} &\text{if $y=q^k$ where $q$ is an odd prime,}\\
        \ONE\{p\equiv3\Mod{4}\} &\text{if $y=2$},\\
        \ONE\{p\equiv5\Mod{8}\} &\text{if $y=4$},\\
        \ONE\{p\equiv1\Mod{8}\} &\text{if $y=2^\nu$ and $\nu\geq3$,}
    \end{cases}\label{eqn:delta_y(p)_definition}\\
    &\text{(extended multiplicatively in $y$)}\nonumber\\
    \vartheta(y)&=2^{\omega(y)+\min(v_2(y),2)}\prod_{q\mid y\,\text{odd}}\left(1+\frac{2q^2+q-1}{q^4-3q^2-2q+2}\right).\label{eqn:vartheta(y)_definition}
\end{align} In particular, for any $\delta_p<\frac{13-2\sqrt{37}}{108}\approx0.0077$, one can find some $\delta_Y>0$ such that $\delta_0>0$.\footnote{Unless otherwise specified, all infinite products are over (subsets of) the primes.}
\end{thm}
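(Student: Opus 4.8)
\emph{Proof outline.} The plan is to carry out the character sum over $\Cl(K)$ first, which reduces the problem to the evaluation of a class-number-weighted count over a quadratic progression --- the role played in \cite{Zubrilina} by the elliptic term of the Eichler--Selberg trace formula. Every $f\in\msF$ attached to $K=\QQ(\sqrt{-D})$ has conductor $N_f=D$ and root number $\varepsilon(f)=1$; summing \ref{eqn:Frob_traces} over all characters $\psi$ of $\Cl(K)$ and using orthogonality gives $\sum_{\psi}a_f(p)=h(-D)\,r_p(D)$, where $r_p(D)$ is the number of principal prime ideals of $\mcO_K$ of norm $p$, so that subtracting off the trivial-character term $a_{\zeta_K}(p)=1+\chi_{-D}(p)$ yields
\[
\sum_{\substack{f\in\msF\\ N_f=D}}a_f(p)=h(-D)\,r_p(D)-1-\chi_{-D}(p).
\]
A principal prime ideal of norm $p$ corresponds, up to units, to an element $\tfrac{u+v\sqrt{-D}}{2}\in\mcO_K$ of norm $p$, and since $D\equiv3\pmod4$ forces $u\equiv v\pmod2$ automatically, $r_p(D)=\tfrac12\#\{(u,v)\in\ZZ^2:u^2+Dv^2=4p\}$; for $p$ large there is no solution with $v=0$, and for $D\in[X,X+Y]$ the constraint $Dv^2<4p$ forces $|v|<2\sqrt\xi$ (using that $\xi$ stays bounded away from every $y^2/4$). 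This pins down the range $1\le y<2\sqrt\xi$ and explains the singularities of $M_y(\xi)$ at $\xi=y^2/4$ in \ref{eqn:M_y_definition}.

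Substituting this identity into the numerator of $G(p,X,Y)$, multiplying by $\sqrt p$, and summing over squarefree $D\equiv3\pmod4$ in $[X,X+Y]$ splits it into three pieces. The character-sum piece $-\sqrt p\sum_{D}\chi_{-D}(p)$ is $O(p^{1+\epsilon})$ by P\'olya--Vinogradov (the relevant Jacobi symbol is a real character in $D$), hence $O(X^{\delta_p+\delta_Y-1/2+\epsilon})$ after division by the denominator $\asymp X^{1/2}Y$, and negligible. The linear piece $-\sqrt p\cdot\#\{D\in[X,X+Y]\ \text{admissible}\}$ shares the factor $\#\{D\ \text{admissible}\}$ with the leading term of the denominator $\sum_D(h(-D)-1)$, which by the class number formula $h(-D)=\tfrac{\sqrt D}{\pi}L(1,\chi_{-D})$ equals $\tfrac{\sqrt X}{\pi}\cdot\tfrac{12A}{11}\cdot\#\{D\ \text{admissible}\}$ to leading order --- here one uses that the average of $L(1,\chi_{-D})$ over admissible $D$ works out to $\tfrac{12A}{11}$, the ``$11$'' being the Euler factor $1+\tfrac29$ at the prime $2$. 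Hence the linear piece contributes exactly $M_-(\xi)=-\tfrac{11\pi}{12A}\sqrt\xi$.

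The main piece is $\tfrac{\sqrt p}{2}\sum_D h(-D)\,\#\{(u,v):u^2+Dv^2=4p\}$; interchanging the $D$- and $(u,v)$-sums and using the symmetries $u\mapsto-u$, $v\mapsto-v$, it becomes $2\sqrt p$ times a sum over $1\le y<2\sqrt\xi$ and over positive $u$ with $y^2\mid4p-u^2$ and $D_{u,y}:=(4p-u^2)/y^2$ admissible in $[X,X+Y]$, weighted by $h(-D_{u,y})$. Inserting $h(-D)=\tfrac{\sqrt D}{\pi}L(1,\chi_{-D})$ and expanding $L(1,\chi_{-D})=\sum_n\chi_{-D}(n)/n$, the diagonal terms $n=m^2$ --- where $\chi_{-D}(m^2)=\ONE\{\gcd(m,2D)=1\}$ --- give the main term. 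Performing the ensuing $u$-sum, the squarefree and $D\equiv3\pmod4$ sieve on $D_{u,y}$ (whose loss occurs precisely at primes $\ell$ with $\Legendre{p}{\ell}=1$, since $\ell^2\mid4p-u^2$ requires $4p$ to be a square modulo $\ell$), and the remaining local computations at the primes dividing $2y$ and at $p$, produces $\vartheta(y)$ as in \ref{eqn:vartheta(y)_definition}, the congruence switch $\delta_y(p)$ of \ref{eqn:delta_y(p)_definition} (recording whether $4p$ is a square modulo $y^2$, with the usual $2$-adic subtleties for $y=2,4,2^\nu$), the factor $c(p)$ of \ref{eqn:c(p)_definition}, and, after dividing by the denominator, the shape $c(p)\sum_{1\le y<2\sqrt\xi}\delta_y(p)M_y(\xi)$ with $M_y$ as stated.

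The hard part will be the error analysis, which is where $\delta_0$ is produced; all estimates must be uniform in $p\ll X^{1+\delta_p}$, $Y\sim X^{1-\delta_Y}$ and in $y$ (the degradation near $y=2\sqrt\xi$ being absorbed by the hypothesis $\xi-y^2/4\ge\delta_\xi$). The sources are: (i) truncating $L(1,\chi_{-D})$ at $n\le T$ together with the tail of the diagonal, which forces $T$ to be a power of $X$; (ii) the off-diagonal terms, those with $n>1$ not a perfect square, where $\sum_u\chi_{-D_{u,y}}(n)$ is an incomplete sum of a real character evaluated at a quadratic argument over a $u$-interval of length $\asymp Y/\sqrt X$, handled by completing the sum together with the Weil bound and then averaging over $n$ and the sieve level via the large sieve for quadratic characters; (iii) truncating at level $Z$ the M\"obius detection of the squarefree and $D\equiv3\pmod4$ conditions on $D_{u,y}$, plus the lattice-count discretization; and (iv) the variation of $\xi=p/X$ across $D\in[X,X+Y]$, of size $\asymp\xi Y/X\le X^{\delta_p-\delta_Y}$, which by Lipschitz continuity of $M_y$ and $M_-$ away from $\xi=y^2/4$ caps $\delta_0$ at $\delta_Y-\delta_p$. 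Optimizing $T$ and $Z$ against (i)--(iii) amounts to a quadratic equation equalizing the two dominant error exponents, whose relevant root gives $\tfrac{13-2\sqrt{37}}{42}-\tfrac67\delta_Y-\tfrac{12}{7}\delta_p$; intersecting with the cap from (iv) yields $\delta_0$ as in \ref{eqn:delta_0_definition}. The final ``in particular'' is then pure algebra: $\delta_0>0$ requires both $\delta_Y>\delta_p$ and $\delta_Y<\tfrac76\cdot\tfrac{13-2\sqrt{37}}{42}-2\delta_p=\tfrac{13-2\sqrt{37}}{36}-2\delta_p$, and such a $\delta_Y$ exists exactly when $3\delta_p<\tfrac{13-2\sqrt{37}}{36}$, i.e.\ $\delta_p<\tfrac{13-2\sqrt{37}}{108}\approx0.0077$.
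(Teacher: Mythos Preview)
Your outline follows the paper's approach closely: orthogonality on $\Cl(K)$ to reduce to a class-number-weighted count of solutions to $x^2+Dy^2=4p$, then the class number formula with truncated $L$-series and a squarefree sieve, with $\vartheta(y),\delta_y(p),c(p)$ emerging from local density computations and the range $1\le y<2\sqrt\xi$ from the archimedean constraint.

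One point needs correction. In the numerator the non-square $n$ do \emph{not} wash out as off-diagonal error. In $G_{\denom}$ one is summing $\Legendre{D}{n}$ over $D$, which for $n\neq\square$ is a non-principal character and averages to zero; but in $G^+_{\num}$ one is summing $\Legendre{x^2-4p}{n}$ over $x$ in a short interval, and already for $n=\ell$ prime the complete sum over $x\bmod\ell$ equals $-1$ (Lemma~\ref{lem:sum_of_(x^2-a/p)_over_residue_class}), not $0$. The paper therefore does not split diagonal/off-diagonal but computes the full periodic sum $C_{8na^2,p,a}$ over $x\pmod{8na^2}$ exactly (Proposition~\ref{prop:sum_of_Legendre}) and then sums the resulting Euler product over \emph{all} $n$ and $a$ in Section~\ref{section:eval_of_C}. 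In particular the correction $-\tfrac{2\ell^{-3}}{1-\ell^{-2}}$ in the local factor of $c(p)$ at each $\ell$ with $\Legendre{p}{\ell}=1$ comes from the $\ell\mid n$ terms (both odd and even powers), so a pure $n=m^2$ diagonal would produce the wrong constant.

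On the error side, the paper's truncation of $L(1,\chi_{-D})$ goes through Bombieri's zero-density theorem (Lemma~\ref{lem:density_theorem}) rather than the large sieve, the short $x$-sum is controlled by an equidistribution lemma in the style of \cite{Sarnak}, and the large-$a$ tail of the squarefree sieve is handled by a bound of Friedlander--Iwaniec; optimizing over four parameters $T,U,V,\sigma$ (not just two) is what ultimately produces the surd $\sqrt{37}$ in $\delta_0$. Your derivation of the second branch $\delta_Y-\delta_p$ and of the final threshold $\delta_p<\tfrac{13-2\sqrt{37}}{108}$ is correct.
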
 

Throughout this paper, we assume the behavior of $p,X,Y$ are as in Theorem \ref{thm:thm1} as they tend to $\infty$.

The main term in \ref{eqn:almost_periodic_murmuration} has dependence on the arithmetic of the prime $p$, and thus is not a murmuration density under the terminologies of \cite{SarnakLetter}. However, this dependence on $p$ is very explicit. We define a function $f:S\to\RR$ where $S\subseteq\NN$ to be \textbf{almost periodic} if for all $\epsilon>0$ there is some modulus $Q$ such that $\abs{f(n)-f(m)}<\epsilon$ whenever $n,m\in S$ and $n-m\in Q\ZZ$. For any given $y$, $c(p)\delta_y(p)$ is an almost periodic function and the main term of \ref{eqn:almost_periodic_murmuration} is a finite sum of smooth functions of $\xi$ multiplied by almost periodic functions in $p$. 

This almost periodic structure of the murmurations for our family $\msF$ is either non-existent or unnoticed in previous families whose murmurations have been computed in \cite{Zubrilina}, \cite{LOP_dirichlet}, \cite{BBLLD_modular_forms}, and \cite{BLLDSHZ_Maass}. One might wonder whether this feature exists for other families of $L$-functions and to what extent. With this formulation one might hope to understand the murmuration phenomenon more precisely without averaging over primes.

To exhibit the connection between murmurations and the 1-level density philosophy of \cite{Katz-Sarnak} in the sense of \ref{eqn:asymps_of_murmuration_function_F_only}, we do need to average over $p$ to extract the murmuration density as defined by \cite{SarnakLetter}. The almost periodic dependence on $p$ is very mild, and thus in principle we expected very little averaging to be required. Precisely, if we assume primes are \textbf{equidistributed in intervals of length $H=o(X)$} in the sense that \begin{equation}\label{eqn:equidistribution_assumption}
    \sum_{\substack{n\in[X,X+H]\\x\equiv a\Mod{q}}}\Lambda(n)=\frac{H}{\varphi(q)}(1+o(1))
\end{equation} for all $a,q$ with $(a,q)=1$, where $\Lambda$ is the von Mangoldt function,\footnote{For example, under the Riemann Hypothesis for Dirichlet $L$-functions, one can take $H=X^{1/2+\epsilon}.$} then averaging over primes in intervals of length $H$ is enough to extract the murmuration density.

\begin{thm}\label{thm:thm2}
Let $P\sim X^{1+\delta_p}$ where $\delta_p\geq0$ be a prime, and choose $\delta_Y$ such that $\delta_0>0$ in Theorem \ref{thm:thm1}. Assume that $\Xi=P/X$ is bounded away from squares of half-integers by at least $\delta_\Xi$ as $P,X,Y\to\infty$, in the sense as in Theorem \ref{thm:thm1}. Let $H=o(P)$ be some parameter such that primes are equidistributed in intervals of length $H$. Then the average of $G(p,X,Y)$ over primes $p\in[P,P+H]$ is \begin{equation}
    G_{\avg}(P,X,Y):=\frac{\sum_{p\in[P,P+H]}G(p,X,Y)}{\sum_{p\in[P,P+H]}1}=\cbarr\sum_{1\leq y<2\sqrt{\Xi}}\Mbarr_y(\Xi)+M_-(\Xi)+o_{\delta_\Xi}(1),
\end{equation} where \begin{align}
    \Xi&=P/N,\\
    \cbarr&=\frac{1}{3}\prod_{\ell>2}\left(1-\ell^{-2}-\frac{\ell^{-3}}{1-\ell^{-2}}\right),\label{eqn:cbarr_definition}\\
    \Mbarr_y(\Xi)&=\frac{11\zeta(2)}{4A}\sqrt{\frac{\Xi}{4\Xi-y^2}}\,\kappa(y),\label{eqn:Mbarr_y_definition}\\
    \kappa(y)&=2^{\ONE\{2\mid y\}}\prod_{q\mid y\text{ odd}}\left(1+\frac{q^2}{q^4-2q^2-q+1}\right).\label{eqn:kappa(y)_definition}
\end{align} and $M_-$ is given in \ref{eqn:M_-_definition} as in Theorem \ref{thm:thm1}. 
\end{thm}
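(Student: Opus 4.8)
The plan is to average the asymptotic expansion \ref{eqn:almost_periodic_murmuration} of Theorem \ref{thm:thm1} over the primes $p\in[P,P+H]$ term by term. Since $\delta_Y$ is chosen so that $\delta_0>0$, the error term $O_{\delta_\Xi}(X^{-\delta_0+\epsilon})$ still contributes $o(1)$ after averaging. Because $H=o(P)$, every $p\in[P,P+H]$ has $\xi=p/X$ close to $\Xi$, so by the hypothesis that $\Xi$ avoids squares of half-integers by at least $\delta_\Xi$ the index set $\{y\in\NN:1\le y<2\sqrt\xi\}$ equals the fixed set $\{1,\dots,\lfloor2\sqrt\Xi\rfloor\}$ for all such $p$, and $M_y(\xi)=M_y(\Xi)+o(1)$, $M_-(\xi)=M_-(\Xi)+o(1)$ (the accumulation over the $O(\sqrt\Xi)$ relevant $y$ being controlled by $\vartheta(y)\ll y^\epsilon$, and trivial when $\delta_p=0$ so that $\Xi$ is bounded). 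The averaged $M_-$ term thus produces $M_-(\Xi)$, and by \ref{eqn:M_y_definition} and \ref{eqn:Mbarr_y_definition} it only remains to prove, for each fixed $y$ in this range,
\[
\frac{\sum_{p\in[P,P+H]}c(p)\,\delta_y(p)}{\sum_{p\in[P,P+H]}1}\ \longrightarrow\ \cbarr\,\frac{\kappa(y)}{\vartheta(y)},
\]
since then $c(p)\delta_y(p)M_y(\xi)$ averages to $\cbarr\tfrac{\kappa(y)}{\vartheta(y)}M_y(\Xi)+o(1)=\cbarr\,\Mbarr_y(\Xi)+o(1)$.

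The heart of the argument is computing that prime average from the equidistribution hypothesis \ref{eqn:equidistribution_assumption}. I would truncate the Euler product in \ref{eqn:c(p)_definition} at a level $L=L(P)\to\infty$ chosen to grow slowly: for every $p$, $\prod_{\ell>L,\,\Legendre{p}{\ell}=1}\bigl(1-2\ell^{-2}-\tfrac{2\ell^{-3}}{1-\ell^{-2}}\bigr)=1+O\!\bigl(\sum_{\ell>L}\ell^{-2}\bigr)=1+o(1)$ and $\tfrac{p+1}{3p}=\tfrac13+o(1)$, both uniformly in $p$. The truncated quantity depends only on $p$ modulo $q_L:=8\prod_{\ell\le L}\ell$, hence is a finite linear combination of indicators $\ONE\{p\equiv a\Mod{q_L}\}$ with $(a,q_L)=1$; applying \ref{eqn:equidistribution_assumption} with this fixed modulus (legitimate once $L$ is fixed, then letting $L\to\infty$ by a diagonal argument) shows the prime average over $[P,P+H]$ equals the average over reduced residue classes mod $q_L$, up to $o(1)$. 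Letting $L\to\infty$, and using that $\delta_y$ is multiplicative while $c(p)$ is an Euler product, the limiting mean factors over primes: an odd prime $q\mid y$ contributes $\tfrac12\bigl(1-2q^{-2}-\tfrac{2q^{-3}}{1-q^{-2}}\bigr)$ — the $\tfrac12$ being the density of primes $p$ with $\Legendre{p}{q}=1$, which also forces the $\ell=q$ Euler factor to be present — the $2$-part contributes $2^{-\min(v_2(y),2)}$, the density of the congruence condition defining $\delta_{2^{v_2(y)}}$, and every prime $\ell>2$ with $\ell\nmid y$ contributes the averaged factor $\tfrac12+\tfrac12\bigl(1-2\ell^{-2}-\tfrac{2\ell^{-3}}{1-\ell^{-2}}\bigr)=1-\ell^{-2}-\tfrac{\ell^{-3}}{1-\ell^{-2}}$. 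Collecting these together with the leading $\tfrac13$ and comparing with \ref{eqn:cbarr_definition},
\[
\frac{\sum_{p\in[P,P+H]}c(p)\delta_y(p)}{\sum_{p\in[P,P+H]}1}\ \longrightarrow\ \cbarr\cdot 2^{-\min(v_2(y),2)}\prod_{q\mid y\text{ odd}}\frac{\tfrac12\bigl(1-2q^{-2}-\tfrac{2q^{-3}}{1-q^{-2}}\bigr)}{1-q^{-2}-\tfrac{q^{-3}}{1-q^{-2}}}.
\]

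Finally one identifies this with $\cbarr\,\kappa(y)/\vartheta(y)$, a purely algebraic check. Clearing denominators, the factor attached to an odd prime $q\mid y$ equals $\tfrac12\cdot\dfrac{q^4-3q^2-2q+2}{q^4-2q^2-q+1}$; on the other hand $1+\tfrac{q^2}{q^4-2q^2-q+1}=\tfrac{q^4-q^2-q+1}{q^4-2q^2-q+1}$ and $1+\tfrac{2q^2+q-1}{q^4-3q^2-2q+2}=\tfrac{q^4-q^2-q+1}{q^4-3q^2-2q+2}$, so the ratio of the $q$-factors of $\kappa$ in \ref{eqn:kappa(y)_definition} and of $\vartheta$ in \ref{eqn:vartheta(y)_definition} is $\dfrac{q^4-3q^2-2q+2}{q^4-2q^2-q+1}$, i.e.\ twice the $q$-factor above. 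The powers of $2$ also agree, since $2^{-\min(v_2(y),2)}\prod_{q\mid y\text{ odd}}\tfrac12=2^{\ONE\{2\mid y\}-\omega(y)-\min(v_2(y),2)}=2^{\ONE\{2\mid y\}}/2^{\omega(y)+\min(v_2(y),2)}$, which is the ratio of the $2$-power prefactors of $\kappa$ and $\vartheta$. This proves the displayed limit; multiplying by $M_y(\Xi)=\frac{11\zeta(2)}{4A}\sqrt{\Xi/(4\Xi-y^2)}\,\vartheta(y)$ yields $\cbarr\,\Mbarr_y(\Xi)$, and summing over $1\le y<2\sqrt\Xi$ and adding $M_-(\Xi)$ completes the proof. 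I expect the main obstacle to lie in the second paragraph: \ref{eqn:equidistribution_assumption} controls only a single fixed modulus, whereas $c(p)\delta_y(p)$ depends on $p$ modulo every prime, so the truncation level $L$ must be chosen to grow slowly enough that $q_L$ stays within the reach of \ref{eqn:equidistribution_assumption} while the discarded tail still tends to $0$; a secondary point is to keep the accumulated $o(1)$'s over the $O(\sqrt\Xi)$ values of $y$ under control, which constrains how large $H$ may be taken when $\delta_p>0$.
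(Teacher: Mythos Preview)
Your proposal is correct and follows essentially the same approach as the paper's own proof: truncate the Euler product defining $c(p)$ at a slowly growing level, apply the equidistribution hypothesis \ref{eqn:equidistribution_assumption} to the resulting periodic function of $p$, and then identify the limiting average $\Exp_{p\in[P,P+H]}c(p)\delta_y(p)$ with $\cbarr\cdot\kappa(y)/\vartheta(y)$. The paper packages this last identification as $\vartheta(y)\eta(y)=\kappa(y)$ for an intermediate multiplicative function $\eta(y)$, whereas you unwind the local factors directly; your explicit algebraic check of the $q$-factors and the $2$-power bookkeeping is more detailed than what the paper writes down, but the content is identical. Your caveats about the growing modulus $q_L$ and the accumulated $o(1)$'s over $O(\sqrt{\Xi})$ values of $y$ are exactly the soft spots in the paper's argument as well.
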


The main term \begin{equation}\label{eqn:murmuration_density}
    M(\Xi)=\cbarr\sum_{1\leq y<2\sqrt{\Xi}}\Mbarr_y(\Xi)+M_-(\Xi)
\end{equation} in Theorem \ref{thm:thm2} is the murmuration density for the family $\msF$. Given any weight function $\Phi$, we can use \ref{eqn:murmuration_function_and_density_relation} to integrate $M(\Xi)$ and obtain the murmuration function $M_\Phi(\Xi)$: \begin{equation}\label{eqn:integrating_murmuration_density}
    M_\Phi(\Xi)=\frac{\int_0^\infty M(\Xi/u)\Phi(u)u^{3/2}\frac{du}{u}}{\int_0^\infty\Phi(u)u^{3/2}\frac{du}{u}}.
\end{equation} 

\begin{thm}\label{thm:thm3}
Let $\Phi:(0,\infty)\to[0,\infty]$ be any compactly supported smooth weight function. Then the murmuration function $M_\Phi(\Xi)$ obtained by integrating the murmuration density \ref{eqn:murmuration_density} via \ref{eqn:integrating_murmuration_density} satisfies $$M_\Phi(0)=0,\AND\lim_{\Xi\to\infty}M_\Phi(\Xi)=-\frac{1}{2}.$$
\end{thm}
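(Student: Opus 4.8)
The plan is to analyze the two pieces of $M_\Phi$ coming from the two pieces of the murmuration density $M(\Xi)=\cbarr\sum_{1\le y<2\sqrt\Xi}\Mbarr_y(\Xi)+M_-(\Xi)$ separately, using \ref{eqn:integrating_murmuration_density}. The claim $M_\Phi(0)=0$ should be essentially immediate: as $\Xi\to 0$ the argument $\Xi/u$ in the numerator integrand is small (uniformly on the compact support of $\Phi$), the sum $\sum_{1\le y<2\sqrt{\Xi/u}}$ is eventually empty, and $M_-(\Xi/u)=-\tfrac{11\pi}{12A}\sqrt{\Xi/u}\to 0$; so the numerator of \ref{eqn:integrating_murmuration_density} tends to $0$ while the denominator is a fixed positive constant, giving $M_\Phi(0)=0$. (One must just check the interchange of limit and integral, which follows from dominated convergence since $\sqrt{\Xi/u}$ is bounded on $\mathrm{supp}\,\Phi$ for $\Xi$ bounded.)

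The substance is the limit as $\Xi\to\infty$, which I would attack by first finding the asymptotics of $M(\Xi)$ itself as $\Xi\to\infty$ and then showing the integral transform \ref{eqn:integrating_murmuration_density} preserves that limit. For the $M_-$ contribution, $M_-(\Xi/u)=-\tfrac{11\pi}{12A}\sqrt\Xi\,u^{-1/2}$ pulls out a factor $\sqrt\Xi$, which is unbounded; this must be cancelled by the $y$-sum. So the real work is to show that $\cbarr\sum_{1\le y<2\sqrt\Xi}\Mbarr_y(\Xi)=\tfrac{11\pi}{12A}\sqrt\Xi+C+o(1)$ for an explicit constant $C$, and more precisely to understand $\sum_{1\le y<2\sqrt{\Xi/u}}\Mbarr_y(\Xi/u)$ as a function of $\Xi/u$ well enough to integrate against $\Phi(u)u^{1/2}\,du/u$. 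Writing $\Mbarr_y(\Xi)=\tfrac{11\zeta(2)}{4A}\sqrt{\tfrac{\Xi}{4\Xi-y^2}}\,\kappa(y)$, the key is that $\sum_{1\le y< 2\sqrt{\Xi}}\tfrac{\kappa(y)}{\sqrt{4\Xi-y^2}}$ behaves, after multiplication by $\sqrt\Xi$, like an approximating Riemann sum for an integral of the shape $\int_0^1 \tfrac{1}{\sqrt{1-t^2}}\,d(\text{average of }\kappa)$, whose divergence at $t=1$ (i.e. $y$ near $2\sqrt\Xi$) produces exactly the $\sqrt\Xi$ growth. Concretely I would split $\sum_y \kappa(y)f(y/(2\sqrt\Xi))$ using the multiplicativity of $\kappa$: the Dirichlet series $\sum_y \kappa(y)y^{-s}$ factors as an Euler product, its value/residue near $s=1$ gives the mean value $\tfrac1Y\sum_{y\le Y}\kappa(y)\to \mathfrak{c}_\kappa$, and partial summation against the smooth (but integrably singular at the endpoint) weight $\sqrt{\tfrac{\Xi}{4\Xi-y^2}}$ converts this into $\tfrac{11\zeta(2)}{4A}\mathfrak{c}_\kappa\sqrt\Xi\int_0^1\tfrac{dt}{\sqrt{1-t^2}}\cdot(\text{const})+O(1)$; matching the leading coefficient against $\tfrac{11\pi}{12A}$ (so that $\cbarr\cdot\tfrac{11\zeta(2)}{4A}\mathfrak{c}_\kappa\cdot(\pi/2)\cdot(\text{Jacobian})=\tfrac{11\pi}{12A}$) shows the $\sqrt\Xi$ terms cancel in $M(\Xi)$, leaving a finite limit which one then identifies with $-\tfrac12$.

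The main obstacle I anticipate is the endpoint analysis $y\to 2\sqrt\Xi$: the summand $\Mbarr_y(\Xi)$ blows up like $(4\Xi-y^2)^{-1/2}$ there, so the asymptotic evaluation of $\sum_{1\le y<2\sqrt\Xi}\Mbarr_y(\Xi)$ is not a routine smooth Euler–Maclaurin estimate — one needs to combine the arithmetic mean-value of $\kappa$ (an Euler-product computation) with a careful treatment of the integrable singularity, e.g. by a dyadic decomposition in $2\sqrt\Xi-y$ or by applying the Mellin/contour method to $\sum_y\kappa(y)y^{-s}$ directly. A secondary technical point is justifying the interchange of the $y$-summation (whose length $2\sqrt{\Xi/u}$ depends on $u$) with the $u$-integration in \ref{eqn:integrating_murmuration_density}, and checking that the $O(1)$ error in $M(\Xi)=-\tfrac12+o(1)$ really is $o(1)$ and not merely bounded; this requires the second-order term in the mean value of $\kappa$ (the next Euler factor in the expansion of $\sum\kappa(y)y^{-s}$ near $s=1$) to be controlled, which is where the precise shape of $\kappa(y)$ and the constants $A$, $\cbarr$, $\zeta(2)$ in \ref{eqn:kappa(y)_definition}, \ref{eqn:cbarr_definition}, \ref{eqn:A_definition} must conspire to give exactly $-\tfrac12$, matching the predicted $\Sp(\infty)$ symmetry in \ref{eqn:asymps_of_murmuration_function_F_only}.
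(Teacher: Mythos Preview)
Your strategy has a genuine gap: the murmuration density $M(\Xi)$ does \emph{not} converge to $-\tfrac12$ as $\Xi\to\infty$, so the plan ``show $M(\Xi)=-\tfrac12+o(1)$, then pass the limit through the integral'' cannot work. Indeed $M(\Xi)$ has a pole at every $\Xi=y^2/4$ (coming from $\Mbarr_y(\Xi)\asymp(4\Xi-y^2)^{-1/2}$), and these accumulate at infinity. This is not merely the ``main obstacle'' you flagged --- it is fatal to the pointwise approach. The smoothing by $\Phi$ is essential, not a post-hoc interchange-of-limits technicality. Concretely, your proposed partial summation of $\sum_{y<2\sqrt\Xi}\kappa(y)\sqrt{\Xi/(4\Xi-y^2)}$ against the mean value of $\kappa$ fails at the endpoint: the derivative of the weight is $\asymp(4\Xi-y^2)^{-3/2}$, which is not integrable, so the error term in Abel summation diverges. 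A dyadic decomposition near $y=2\sqrt\Xi$ will recover that the sum is $\tfrac{11\pi}{12A\cbarr}\sqrt\Xi+O(1)$ on average, but the $O(1)$ is genuinely oscillatory and bounded, not $o(1)$.

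The paper's proof handles this by applying Poisson summation to $\sum_y$ \emph{before} attempting any asymptotics. Writing $\kappa(y)=\sum_{d\mid y}Q(d)$ and Poisson-summing the inner sum over $y\equiv 0\ (d)$ produces three pieces: the zero frequency contributes exactly the $\sqrt\Xi$ term that cancels $M_-(\Xi)$; a constant term evaluates via the Euler products for $\zeta(2),\cbarr,A,Q$ to exactly $-\tfrac12$; and the nonzero frequencies give a sum of Bessel functions $\sqrt\Xi\,J_0(4\pi m\sqrt\Xi/d)$. These Bessel terms are individually only $O(1)$ (since $J_0(x)\asymp x^{-1/2}$), which is why $M(\Xi)+\tfrac12$ is bounded but not small. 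Only after integrating against $\Phi$ does one gain: the paper writes the $\Phi$-average via Mellin inversion as a contour integral involving $\tilde\Phi(\tfrac32-s)$, and the \emph{rapid decay} of $\tilde\Phi$ in vertical strips (from smoothness of $\Phi$) is what allows the contour shift to $\Re s=-\tfrac12$, giving $M_\Phi(\Xi)+\tfrac12\ll\Xi^{-1/2}$. Your Mellin-on-$\sum\kappa(y)y^{-s}$ idea is close in spirit, but without the extra factor $\tilde\Phi$ the integrand lacks the decay needed to move the contour far enough left.
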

We prove Theorem \ref{thm:thm3} in Section \ref{section:asymptotics}.

Numerically, we compute the murmuration function $M_\Phi(\Xi)$ for $\Phi=\chi_{[1,2]}$,\footnote{While the characteristic function $\Phi$ here is not smooth and technically not an appropriate weight function, it is much easier to compute numerics with.} and our analytical computations fit the numerics nicely (see Figure \ref{fig:murmuration_functions_versus_pred}). Moreover, the murmuration function numerically converges to $-\frac{1}{2}$ (see Figure \ref{fig:murmuration_functions_asymp_numerics}), as expected.

A more complete presentation of numerical data is included in Section \ref{B:numerics}.

\begin{figure}[H]
    \centering
    \includegraphics[width=\textwidth]{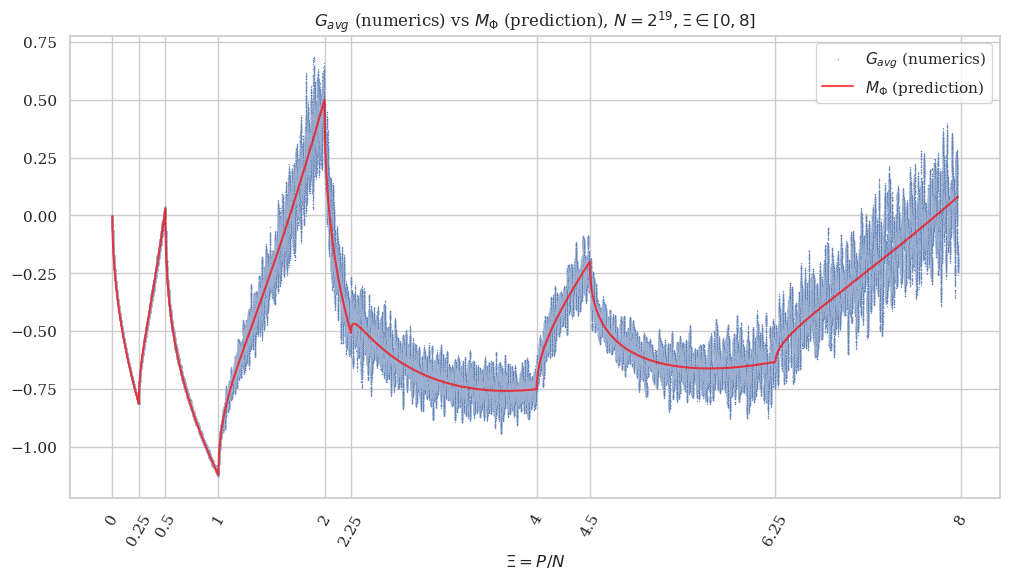}
    \caption{$G_{\text{avg}}(P,N)$ (from numerics, with rolling average over primes in intervals of size $H=P^{0.55}$) vs. murmuration function $M_\Phi(P/N)$ for $\Phi=\chi_{[1,2]}$ (integrated from the density $M(\Xi)$ in \ref{eqn:murmuration_density}), shown for $N=2^{19},\Xi\in[0,8]$.}
    \label{fig:murmuration_functions_versus_pred}
\end{figure}

\begin{figure}[H]
    \centering
    \includegraphics[width=\textwidth]{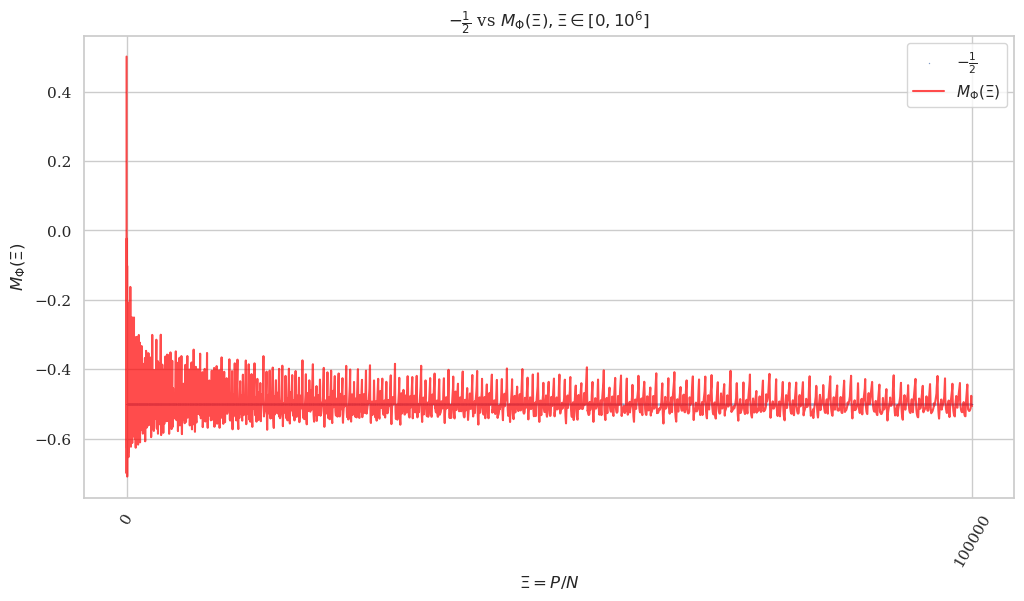}
    \caption{Asymptotic behavior of the murmuration function $M_\Phi(\Xi)$ for $\Phi=\chi_{[1,2]}$.}
    \label{fig:murmuration_functions_asymp_numerics}
\end{figure}

\newtheorem*{ack}{Acknowledgements}
\begin{ack}
I am very grateful to my undergraduate thesis adviser Peter Sarnak for proposing this topic and for his guidance throughout. I also thank Michael Cheng, Alexey Pozdnyakov, Maksym Radziwi\l\l, Kannan Soundararajan, and Liyang Yang for helpful conversations and remarks. I also benefitted greatly from a recent workshop in murmurations, and I am thankful to the organizers Yang-Hui He, Abhiram Kidambi, Kyu-Hwan Lee, and Thomas Oliver, as well as the Simon's Center for Geometry and Physics at Stony Brook University.
\end{ack}
\section{Easy Evaluations}
In this section we simplify \begin{equation}\label{eqn:G(p,X,Y)_sect_2_rewrite}
    G(p,X,Y)=\frac{\sum_{\substack{f\in\msF\\N_f\in[X,X+Y]}}a_f(p)\sqrt{p}}{\sum_{\substack{f\in\msF\\N_f\in[X,X+Y]}}1}
\end{equation} using orthogonality of characters. Computing $G(p,X,Y)$ then becomes a problem in summing class numbers in short intervals. We also evaluate the easier parts of $G(p,X,Y)$.

\subsection{Orthogonality of Characters}
The denominator of $\ref{eqn:G(p,X,Y)_sect_2_rewrite}$ is easily rewritten as $$G_{\denom}(p,X,Y):=\#\{f\in\msF:N_f\in[X,X+Y]\}=\sum_{D\in\mcD}(h(-D)-1),$$ where \begin{equation}\label{eqn:mcD_definition}
    \mcD=\{D\in[X,X+Y]:D\equiv3\Mod{4},D\ \square\text{-free}\}.
\end{equation} The numerator, on the other hand, can be rewritten using orthogonality of characters. Specifically, \begin{align*}
    G_{\num}(p,X,Y)&:=\sum_{\substack{f\in\msF\\N_f\in[X,X+Y]}}a_f(p)\sqrt{p}=\sqrt{p}\sum_{D\in\mcD}\sum_{\substack{\psi:\Cl(K)\to S^1\\\psi\neq\ONE}}a_f(p),
\end{align*} where $K=\QQ(\sqrt{-D})$ and $f=L(s,\psi)$. By \ref{eqn:Frob_traces}, if $\Legendre{-D}{p}=-1$ then the inner sum vanishes, and otherwise it becomes a sum over the nontrivial characters of $\psi(\mfp)+\psi(\mfpbarr)$ if $p\mcO_K=\mfp\mfpbarr$ splits, or of $\psi(\mfp)$ if $p\mcO_K=\mfp^2$ ramifies. By orthogonality of characters, $$\sum_{\substack{\psi\in(\Cl(K))^*\\\psi\neq\ONE}}\psi(\mfp)=\ONE_\mfp h(-D)-1,$$ where $\ONE_\mfp$ is the indicator function for $\mfp$ being a principal ideal. If $p\mcO_K=\mfp^2$ ramifies and $\mfp$ is principal, then $D=-p$ necessarily, and this contributes $\sqrt{p}(h(-p)-1)\ll p^{1+\epsilon}$ to the main term. Therefore, \begin{align*}
    G_{\num}(p,X,Y)&=2\sqrt{p}\sum_{D\in\mcD_p}(\ONE_{\mfp}h(-D)-1)+O\left(p^{1+\epsilon}\right)
\end{align*} where $p\mcO_K=\mfp\mfpbarr$ and \begin{equation}\label{eqn:mcD_p_definition}
    \mcD_p=\left\{D\in\mcD:\Legendre{-D}{p}=1\right\}.
\end{equation}

Now we define \begin{equation}\label{eqn:nu_Dp_definition}
    \nu(D,p):=\#\{x^2+Dy^2=4p:x,y>0\}.
\end{equation}
\counterwithin{thm}{section}
\setcounter{thm}{0}
\begin{prop}
Assume $\Legendre{-D}{p}=1$. Then $\nu(D,p)=\ONE_\mfp$, i.e. it is 1 when $p\mcO_K=(\alpha)(\overline{\alpha})$ splits into principal ideals, and 0 otherwise. 
\end{prop}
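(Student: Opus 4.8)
The plan is to translate principality of $\mfp$ into the existence of an element of $\mcO_K$ of norm $p$ with suitable integer coordinates, and then to count such elements modulo the obvious symmetries.

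First I would record the relevant structure. Since $\Legendre{-D}{p}=1$ the prime $p$ is unramified and splits, $p\mcO_K=\mfp\mfpbarr$ with $\mfp\neq\mfpbarr$, and in particular $p\nmid D$. As $-D\equiv1\Mod4$, we have $\mcO_K=\left\{\frac{x+y\sqrt{-D}}{2}:x,y\in\ZZ,\ x\equiv y\Mod2\right\}$ with $N\!\left(\frac{x+y\sqrt{-D}}{2}\right)=\frac{x^2+Dy^2}{4}$. The key elementary observation is that the congruence condition is automatic: if $x^2+Dy^2=4p$ with $x,y\in\ZZ$, then reducing mod $4$ and using $D\equiv3\Mod4$ shows that $x,y$ have the same parity (if $y$ is odd then $x^2\equiv-D\equiv1$, so $x$ is odd; if $y$ is even then $x$ is even), hence $\frac{x+y\sqrt{-D}}{2}\in\mcO_K$. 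Conversely every $\alpha\in\mcO_K$ with $N(\alpha)=p$ yields such a pair. This gives a bijection between $\{\alpha\in\mcO_K:N(\alpha)=p\}$ and $\{(x,y)\in\ZZ^2:x^2+Dy^2=4p\}$ intertwining $\alpha\mapsto-\alpha,\ \overline\alpha$ with $(x,y)\mapsto(-x,-y),\ (x,-y)$.

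Next I would dispose of degenerate solutions: $y=0$ gives $x^2=4p$, impossible, while $x=0$ gives $Dy^2=4p$, which (as $D$ is odd and squarefree) forces $y$ even and then $D=p$, contradicting $p\nmid D$. So in every solution $x\neq0$ and $y\neq0$, the four sign changes $(\pm x,\pm y)$ are distinct, and exactly one of them lies in the quadrant $x,y>0$ counted by $\nu(D,p)$.

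Finally the count splits into two cases. If $\mfp$ is not principal then, conjugation being a ring automorphism, neither is $\mfpbarr$, so no ideal of norm $p$ is principal, there is no $\alpha\in\mcO_K$ of norm $p$, hence no solution at all, and $\nu(D,p)=0=\ONE_\mfp$. If $\mfp=(\alpha)$ is principal, then since $D>3$ we have $\mcO_K^\times=\{\pm1\}$, so the generators of $\mfp$ and $\mfpbarr$ are exactly $\pm\alpha$ and $\pm\overline\alpha$; any $\beta$ of norm $p$ generates $\mfp$ or $\mfpbarr$ and hence lies in the set $\{\pm\alpha,\pm\overline\alpha\}$ of four distinct elements, which under the bijection are precisely the four sign changes of one solution, exactly one of which is counted by $\nu(D,p)$; thus $\nu(D,p)=1=\ONE_\mfp$. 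The only point needing any care — the ``main obstacle,'' such as it is — is ensuring this correspondence is genuinely four-to-one in both directions with no coincidences, which is exactly why the nondegeneracy step and the triviality of the unit group are needed; the rest is unwinding definitions.
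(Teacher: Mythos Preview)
Your proof is correct and follows essentially the same route as the paper's: both identify solutions of $x^2+Dy^2=4p$ with elements of $\mcO_K$ of norm $p$, and then use unique factorization of ideals together with $\mcO_K^\times=\{\pm1\}$ (since $D>3$) to match the four sign-variants of a solution with the generators $\{\pm\alpha,\pm\overline\alpha\}$ of $\mfp$ and $\mfpbarr$. You are simply more explicit than the paper about two points it glosses over, namely that the parity condition $x\equiv y\Mod2$ is automatic and that the degenerate cases $x=0$ or $y=0$ cannot occur.
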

\begin{proof}
If $\nu(D,p)>0$ and $x^2+Dy^2=4p$, then $\alpha=\frac{x+y\sqrt{D}}{2}\in\mcO_K$ satisfies $\alpha\alphabarr=p$, so $p\mcO_K$ splits into principal ideals. By unique factorization of ideals, $\nu(D,p)\leq1$.

On the other hand, assume $p\mcO_K=(\alpha)(\alphabarr)$ splits into principal ideals. Since $D>3$ and $\mcO_K^\times=\{\pm1\}$, by possibly changing signs and order, we may assume that $$\alpha=\frac{x+y\sqrt{D}}{2},\qquad\alphabarr=\frac{x-y\sqrt{D}}{2}$$ with $x,y>0$, and consequently $x^2+Dy^2=4p$. Since $\nu(D,p)\leq1$, it is exactly 1.
\end{proof}

Therefore, we can rewrite $G(p,X,Y)$ as \begin{equation}
    G(p,X,Y)=\frac{2\sqrt{p}\sum_{D\in\mcD_p}\left(\nu(D,p)h(-D)-1\right)+O\left(p^{1+\epsilon}\right)}{\sum_{D\in\mcD}(h(-D)-1)},
\end{equation} with $\mcD,\mcD_p,$ and $\nu(D,p)$ given by \ref{eqn:mcD_definition}, \ref{eqn:mcD_p_definition}, and \ref{eqn:nu_Dp_definition}, respectively. For convenience of notation, we further define \begin{align}
    G_{\num}^+(p,X,Y)&:=2\sqrt{p}\sum_{D\in\mcD_p}\nu(D,p)h(-D),\\
    G_{\num}^-(p,X,Y)&:=-2\sqrt{p}\sum_{D\in\mcD_p}1.
\end{align}

\subsection{Evaluation of $G_{\denom}$ and $G_{\num}^-$}\label{section:denom_and_num_minus}
We use Dirichlet's class number formula to express $h(-D)$ in terms of $L(1,\chi_{-D})$: $$G_{\denom}=\sum_{D\in\mcD}(h(-D)-1)=\sum_{D\in\mcD}\frac{\sqrt{D}}{\pi}L(1,\chi_{-D})+O(Y),$$ where recall $\chi_{-D}=\Legendre{-D}{\cdot}$ is the non-principal real Dirichlet character modulo $D$. Moreover, since $\mcD\subset[X,X+Y]$, we have $\sqrt{D}=\sqrt{X}+O\left(X^{-1/2}Y\right)$, and along with $L(1,\chi_{-D})\ll D^{\epsilon}$ we have \begin{equation*}\label{eqn:G_denom_after_smooth_approx}
    G_{\denom}=\frac{\sqrt{X}}{\pi}\sum_{D\in\mcD}L(1,\chi_{-D})+O\left(X^{-1/2+\epsilon}Y^2+Y\right).
\end{equation*} Next we truncate the $L$-function at some cutoff parameter $T$. By Lemma \ref{lem:density_theorem}, we have \begin{align*}
    G_{\denom}&=\frac{\sqrt{X}}{\pi}\sum_{D\in\mcD}\sum_{n\leq T}\frac{\chi_{-D}(n)}{n}+\Err_1\\
    &=\begin{multlined}[t][0.9\textwidth]
        \frac{\sqrt{X}}{\pi}\sum_{n\leq T}\frac{1}{n}\Legendre{-1}{n}\sum_{D\in[X,X+Y]}\mu^2(D)\frac{\chi_{4,1}(D)-\chi_{4,2}(D)}{2}\Legendre{D}{n}+\Err_1,
    \end{multlined}
\end{align*} where $\chi_{4,1},\chi_{4,2}$ are the principal and non-principal characters modulo 4, respectively, and \begin{equation}\label{eqn:section_2_error}
    \Err_1\ll\left(T^{-1}UX^{\frac{8(1-\sigma)}{3-2\sigma}+1}+X^{1/2}Y\left(T^{\sigma-1}+U^{-1}\right)+X^{-1/2}Y^2+Y\right)(TX)^\epsilon,
\end{equation} with $\frac{1}{2}\leq\sigma\leq1$ and $U\ll X$ arbitrary.

Now we apply Lemma \ref{lem:partial_sum_characters_over_squarefree_numbers} to evaluate the inner sum. We need to pick out the principal characters in $D$: this happens for $\chi_{4,1}(D)\Legendre{D}{n}$ when $n=m^2$, in which case the modulus is $2m$. Otherwise, the character is non-principal and its modulus is $O(T)$. Thus \begin{equation*}
    G_{\denom}=\frac{\sqrt{X}}{\pi}\left[\sum_{m\leq\sqrt{T}}\frac{Y}{2m^2}\frac{\eta(2m)}{\zeta(2)}+O\left(T^{1/5+\epsilon}X^{3/5+\epsilon}\right)\right]+\Err_1.
\end{equation*} Then by Lemma \ref{lem:sum_of_eta(2m)},  $$G_{\denom}=\frac{4A}{11\pi\zeta(2)}Y\sqrt{X}+O\left(T^{1/5+\epsilon}X^{11/10+\epsilon}+T^{-1/2}X^{1/2}Y\right)+\Err_1.$$ Setting\footnote{The second term in the error bound of \ref{eqn:G_denom} comes from the terms $T^{1/5}X^{11/10},T^{-1}UX^{\frac{8(1-\sigma)}{3-2\sigma}+1},$ and $U^{-1}X^{1/2}Y$ (up to $\epsilon$-powers), and $T,U,\sigma$ are selected so that they are equal. Note that $\frac{55+\sqrt{1801}}{84}+\frac{1}{7}\approx1.3028<1.5$, so this error term is power-saving.}  \begin{align*}
    \sigma&=\frac{181-\sqrt{1801}}{172}\approx0.8056,\\
    T&=X^{(-187+5\sqrt{1801})/84}Y^{5/7}\approx X^{0.300}Y^{0.714},\\
    U&=X^{(-13-\sqrt{1801})/84}Y^{6/7}\approx X^{-0.660}Y^{0.857},
\end{align*} and applying $Y=o(X)$ to simplify error bounds, we obtain \begin{equation}\label{eqn:G_denom}
    G_{\denom}=\frac{4A}{11\pi\zeta(2)}Y\sqrt{X}+O\left(\left(X^{-1/2}Y^2+X^{(55+\sqrt{1801})/84}Y^{1/7}\right)(XY)^\epsilon\right),
\end{equation} and consequently \begin{equation}\label{eqn:G_denom_inverse}
    G_{\denom}^{-1}=\frac{11\pi\zeta(2)}{4A}\frac{1}{Y\sqrt{X}}+O\left(\left(X^{-3/2}+X^{(-29+\sqrt{1801})/84}Y^{-13/7}\right)(XY)^\epsilon\right).
\end{equation}

Using the same methods, we find \begin{align}
    G_{\num}^-(p,X,Y)&=-2\sqrt{p}\sum_{D\in\mcD_p}1\nonumber\\
    &=-2\sqrt{p}\sum_{D\in[X,X+Y]}\mu^2(D)\frac{\chi_{4,1}(D)-\chi_{4,2}(D)}{2}\frac{\Legendre{-D}{p}+1}{2}+O\left(\sqrt{p}\sum_{\substack{D\in[X,X+Y],p\mid D}}1\right)\nonumber\\
    &=-\frac{\sqrt{p}}{2}\left(\frac{2Y}{3\zeta(2)}+O\left(p^{1/5+\epsilon}X^{3/5+\epsilon}\right)\right)+O\left(p^{1/2}+p^{-1/2}Y\right)\nonumber\\
    &=-\frac{1}{3\zeta(2)}Y\sqrt{p}+O\left(p^{7/10+\epsilon}X^{3/5+\epsilon}\right).\label{eqn:G_num_minus}
\end{align}

\section{Evaluation of $G_{\num}^+$: $y=1$ Terms}\label{section:contribution_y=1}
Now we evaluate $$G_{\num}^+(p,X,Y)=2\sqrt{p}\sum_{D\in\mcD_p}\nu(D,p)h(-D).$$ Recall that $\nu(D,p)=\#\{x^2+Dy^2=4p:x,y>0\}$ and is at most 1. By \textbf{``$\bm{y=y_0}$ terms''} we mean the terms in $G_{\num}^+$ because of a solution $x^2+Dy^2=4p$ with $y=y_0$; we denote the sum of such terms by $G_{\num,y_0}^+$. The goal of this section is to evaluate $G_{\num,1}^+$, and the next section will generalize the result to $G_{\num,y}^+$ for arbitrary $y$. In the $y=1$ term that we compute now, we can already see the almost periodic feature of the murmuration density due to the presence of $c(p)$ (defined in \ref{eqn:c(p)_definition} and below).

We will show:
\begin{thm}\label{thm:thm_y=1}
Assume $p,X,Y$ tend to $\infty$ under the assumptions of Theorem \ref{thm:thm1}; in particular, we assume $\xi=p/X$ is bounded away from $1/4$, i.e. there exists some $\delta_\xi>0$ such that $\xi-1/4\geq\delta_\xi$. We have \begin{equation}\label{eqn:thm_y=1}
    G_{\num,1}^+(p,X,Y)=\frac{c(p)}{\pi\sqrt{4\xi-1}}Y\sqrt{p}+O_{\delta_\xi}\left[\left(p^{5/7}X^{(7+\sqrt{37})/21}Y^{1/7}+\frac{Y^2}{X^{1/2}}\right)(pXY)^\epsilon\right],
\end{equation} where $$c(p)=\frac{p+1}{3p}\prod_{\ell>2,\Legendre{p}{\ell}=1}\left(1-2\ell^{-2}+\frac{2\ell^{-3}}{1-\ell^{-2}}\right)$$ as in \ref{eqn:c(p)_definition}.
\end{thm}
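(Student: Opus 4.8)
The plan is to make the $y=1$ terms completely explicit and thereby reduce the theorem to a first moment estimate for $L(1,\chi_{-D})$ as $D$ runs along the quadratic sequence $D=4p-x^2$. A discriminant $D\in\mcD_p$ contributes to $G_{\num,1}^+$ exactly when $x^2+D=4p$ for some integer $x>0$, i.e.\ $D=4p-x^2$; since $\nu(D,p)\le1$ (the proposition of Section 2), distinct positive $x$ give distinct contributing $D$, so this is a clean reindexing. The side conditions defining $\mcD_p$ translate transparently: $D\equiv3\pmod4\iff x$ odd; $\Legendre{-D}{p}=1$ is automatic since $-D\equiv x^2\pmod p$ with $p\nmid x$ (because $x<2\sqrt p<p$ once $p$ is large); $D\in[X,X+Y]\iff x\in I:=(\sqrt{4p-X-Y},\,\sqrt{4p-X}\,]$; and squarefreeness is carried by $\mu^2(4p-x^2)$. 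As $\xi=p/X$ is bounded away from $1/4$, we have $4p-X=X(4\xi-1)\gg_{\delta_\xi}X$, the interval $I$ contains $L=\tfrac{Y}{2\sqrt{X(4\xi-1)}}(1+o(1))$ integers, and every such $D$ satisfies $\sqrt D=\sqrt X+O(X^{-1/2}Y)$. Feeding in Dirichlet's class number formula $h(-D)=\tfrac{\sqrt D}{\pi}L(1,\chi_{-D})$ and absorbing the $\sqrt D\approx\sqrt X$ replacement (whose error is $O(X^{-1/2}Y^2(pXY)^\epsilon)$), the theorem reduces to evaluating
\[
\frac{2\sqrt{pX}}{\pi}\sum_{\substack{x\in I\\ x\ \mathrm{odd}}}\mu^2(4p-x^2)\,L\!\left(1,\chi_{-(4p-x^2)}\right).
\]

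I would then truncate each $L$-value, $L(1,\chi_{-D})=\sum_{n\le T}\chi_{-D}(n)/n+(\mathrm{tail})$. Since all terms are nonnegative, the $\mu^2$-weighted tail over $x\in I$ is dominated by the same sum over \emph{all} squarefree $D\equiv3\pmod4$ in $[X,X+Y]$, so Lemma \ref{lem:density_theorem} applies exactly as in the evaluation of $G_{\denom}$ (only the optimal $T$ will differ, owing to the extra factor $\sqrt p$). Swapping the order of summation, detecting squarefreeness via $\mu^2(m)=\sum_{d^2\mid m}\mu(d)$, and truncating the $d$-sum at $d\le U$ (the tail is $\ll\sum_{d>U}(L/d^2+d^\epsilon)$ by counting roots of $x^2\equiv4p\pmod{d^2}$ in $I$) leaves
\[
\sum_{n\le T}\frac1n\sum_{d\le U}\mu(d)\sum_{\substack{x\in I,\ x\ \mathrm{odd}\\ x^2\equiv4p\ (d^2)}}\Legendre{x^2-4p}{n},
\]
in which, for fixed $n$ and $d$, the summand over $x$ is a fixed periodic weight of modulus $q\ll d^2n$.

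The heart of the argument is evaluating this inner sum over $x$ in an interval. Writing $n=ab^2$ with $a$ squarefree, $\Legendre{x^2-4p}{n}=\Legendre{x^2-4p}{a}\cdot\ONE\{(4p-x^2,b)=1\}$; since $a>1$ is squarefree and $x^2-4p$ is not a square modulo $a$, the factor $\Legendre{\cdot}{a}$ is a nonprincipal character, so that part contributes only $O(q^{1/2+\epsilon})$ by P\'olya--Vinogradov, and the main term is precisely the $a=1$ (i.e.\ $n=b^2$) part. For $n=b^2$ the sum equals $L$ times the density modulo $q$ of the $x$ that are odd, satisfy $x^2\equiv4p\ (d^2)$, and have $(4p-x^2,b)=1$; by CRT this density factors over primes, and carrying out the residual sums over $b$ and $d$ prime-by-prime produces the Euler product for $c(p)$ of \ref{eqn:c(p)_definition}: the prime $2$, together with the ``$x$ odd'' constraint, gives $\tfrac13$; the prime $p$ gives $\tfrac{p+1}{p}$, the residues $x\equiv0\pmod p$ forcing $\Legendre{x^2-4p}{p^a}=0$; and each odd $\ell\ne p$ gives the corresponding factor of $c(p)$ (which is $1$ when $\Legendre p\ell=-1$, since then $x^2\equiv4p\pmod\ell$ has no solution). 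Collecting everything, the main term is $\tfrac{2\sqrt{pX}}{\pi}\cdot L\cdot c(p)=\tfrac{c(p)}{\pi\sqrt{4\xi-1}}Y\sqrt p$, as claimed.

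The main obstacle is uniform control of the error terms, which is tighter here than for $G_{\denom}$. The interval $I$ has length only $L\asymp X^{1/2-\delta_Y}$, so the moduli $n\le T$ and $d^2\le U^2$ entering the character sums routinely exceed $L$, rendering P\'olya--Vinogradov lossy; one must therefore choose $T$, $U$ (and the zero-density exponent $\sigma$) to balance the $L$-truncation error (better for larger $T$), the squarefree-truncation error (better for larger $U$), and the accumulated character-sum error $\sum_{n\le T}\tfrac1n\sum_{d\le U}(d^2n)^{1/2+\epsilon}$ (worse for larger $T$ and $U$), all against a main term of size $\asymp Y\sqrt p$. The extra factor $\sqrt p\ll X^{(1+\delta_p)/2}$ out front is exactly what makes this optimization more delicate than the one in Section 2, and what produces both the exponent $(7+\sqrt{37})/21$ and the upper bound on $\delta_p$; the assumption that $\xi$ stays bounded away from $1/4$ keeps $L$ and all implied constants under control throughout.
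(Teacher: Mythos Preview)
Your overall setup---reindexing by $x$, the class number formula, truncation via Lemma~\ref{lem:density_theorem}, the square-free sieve---matches Section~3 of the paper. The genuine gap is in your treatment of the innermost sum $\sum_{x}\Legendre{x^2-4p}{n}$ over $x$ in an interval. First, this is \emph{not} a Dirichlet character sum in $x$: the argument $x^2-4p$ is a quadratic polynomial, so P\'olya--Vinogradov does not apply. One needs Weil-type bounds for incomplete sums of $\Legendre{f(x)}{n}$, which the paper imports as Proposition~\ref{prop:sum_of_Legendre} (citing Sarnak's Lemmas~2.1--2.2) and which produce an error $O\bigl(d^{\epsilon}n\log n/\sqrt{k(n)}\bigr)$ with $k(n)$ the square-free part of $n$. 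Second, and more seriously, your claim that only $n=b^2$ contributes to the main term is false. For \emph{every} $n$ the interval sum equals $\tfrac{|I|}{8nd^2}C_{8nd^2,p,d}$ plus an error, and the complete sums $C_{8n,p}$ are nonzero for non-square $n$ (for instance $C_{8\cdot2^e,p}=4(-2)^e$); all of these feed into $c(p)$. Concretely, the local factor at $2$ in the paper's evaluation of $c(p)$ is $\tfrac{1}{8}\bigl(4+\sum_{e\ge1}4(-2)^e/4^e\bigr)=\tfrac{1}{3}$, whereas keeping only even $e$ (your $n=b^2$ restriction) gives $\tfrac{2}{3}$; your Euler product would therefore produce the wrong constant, and similarly at odd primes $\ell$ with $\Legendre{p}{\ell}=1$.

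A smaller issue: your large-$d$ tail bound $\sum_{d>U}(L/d^2+d^{\epsilon})$ is not power-saving, since the sum runs up to $d\ll X^{1/2}$ and $\sum_{U<d\ll X^{1/2}}d^{\epsilon}\asymp X^{1/2+\epsilon}$; after the prefactor $\sqrt{pX}$ this already exceeds the main term $Y\sqrt p\asymp p^{1/2}X^{1-\delta_Y}$. The paper handles this ``large $a$'' range with the Friedlander--Iwaniec square sieve (see~\ref{eqn:large_a_goal2}), not a naive root count.
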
 The main term is only present for $\xi>1/4$; otherwise, $\nu(D,p)=0$ for all $D$ and $G^+_{\num,1}=0$ trivially. Also note that $\frac{5}{7}+\frac{7+\sqrt{37}}{21}+\frac{1}{7}\approx1.4801<1.5,$ so the first error term is power-saving.

\subsection{Overview of Computation}
Define \begin{equation}\label{eqn:A(p)_B(p)_definition}
    A(p):=\sqrt{4p-(X+Y)},\qquad B(p):=\sqrt{4p-X}.
\end{equation} We note that by our assumptions that $\xi$ is bounded away from $1/4$ and $Y=o(X)$, $A(p)$ is well defined when the parameters are large enough and $A(p),B(p)\asymp\sqrt{p}$. We quickly note that interval $[A(p),B(p)]$ has length \begin{align}
    \Delta(p)&:=B(p)-A(p)\nonumber\\
    &=\sqrt{4p-X}\left(1-\sqrt{1-\frac{Y}{4p-X}}\right)\nonumber\\
    &=\frac{Y}{2\sqrt{4p-X}}+O_{\delta_\xi}\left(p^{-3/2}Y^2\right)\label{eqn:Delta(p)_bound}\\
    &\asymp p^{-1/2}Y.\label{eqn:Delta(p)_asymp}
\end{align}

Now, if $x^2+D=4p$ and $D\in\mcD_p$, then $x\in[A(p),B(p)]$ and odd; on the other hand, if $x\in[A(p),B(p)]$ is odd, then $D=4p-x^2$ is in $\mcD_p$ if square-free. Therefore we define \begin{equation}\label{eqn:ONE_definition}
    \ONE(x):=\ONE\{2\nmid x\},
\end{equation} and the sum of $y=1$ terms is \begin{align*}
    G_{\num,1}^+(p,X,Y)&=2\sqrt{p}\sum_{A(p)\leq x\leq B(p)}\ONE(x)\mu^2(4p-x^2)h(x^2-4p)\\
    &=2\sqrt{p}\sum_{A(p)\leq x\leq B(p)}\ONE(x)\mu^2(4p-x^2)\frac{\sqrt{4p-x^2}}{\pi}\,L(1,\chi_{x^2-4p}),
\end{align*} by Dirichlet's class number formula. For $x\in[A(p),B(p)]$, we have $\sqrt{4p-x^2}=\sqrt{X}+O\left(X^{-1/2}Y\right)$; this along with \ref{eqn:Delta(p)_asymp} and $L(1,\chi_{-D})\ll D^\epsilon$ gives \begin{equation}\label{eqn:y=1_after_smooth_function_approx}
    G_{\num,1}^+=\frac{2\sqrt{pX}}{\pi}\sum_{A(p)\leq x\leq B(p)}\ONE(x)\mu^2(4p-x^2)L(1,\chi_{x^2-4p})+O\left(X^{-1/2+\epsilon}Y^2\right).
\end{equation} Now we introduce the square-free sieve $\mu^2(n)=\sum_{a^2\mid n}\mu(a)$ and truncate the $L$-function at a cutoff parameter $T$ using Lemma \ref{lem:density_theorem}: \begin{align}
    G_{\num,1}^+&=\begin{multlined}[t]
        \frac{2\sqrt{pX}}{\pi}\sum_{A(p)\leq x\leq B(p)}\ONE(x)\left[\sum_{a^2\mid(4p-x^2)}\mu(a)\right]\sum_{n\leq T}\frac{1}{n}\Legendre{x^2-4p}{n}+\Err_2
    \end{multlined}\nonumber\\
    &=\begin{multlined}[t][0.8\textwidth]
        \frac{2\sqrt{pX}}{\pi}\sum_{n\leq T}\frac{1}{n}\sum_{a\leq\sqrt{X+Y}}\mu(a)\sum_{\substack{A(p)\leq x\leq B(p)\\a^2\mid(4p-x^2)}}\ONE(x)\Legendre{x^2-4p}{n}+\Err_2,
    \end{multlined}\label{eqn:G_num,1_after_squarefree_sieve_and_PV}
\end{align} where \begin{equation}\label{eqn:Err_2}
    \Err_2\ll\left(T^{-1}Up^{1/2}X^{\frac{8(1-\sigma)}{3-2\sigma}+1}+X^{1/2}Y\left(T^{\sigma-1}+U^{-1}\right)+X^{-1/2}Y^2\right)(TUpXY)^\epsilon,
\end{equation} with $\frac{1}{2}\leq\sigma\leq1$ and $U\ll X$.

We claim that the main term in \ref{eqn:G_num,1_after_squarefree_sieve_and_PV} is \begin{equation}\label{eqn:main_term}
    \frac{2\sqrt{pX}}{\pi}\Delta(p)\sum_{n=1}^\infty\sum_{a=1}^\infty\frac{\mu(a)}{8n^2a^2}C_{8na^2,p,a}+\Err_3,
\end{equation} with error \begin{equation}\label{eqn:Err_3}
    \Err_3\ll\left(T^{1/2}Vp^{1/2}X^{1/2}+\frac{X^{1/2}Y}{T^{1/2}}+\frac{X^{1/2}Y}{V}+\frac{p^{7/6}X^{1/2}}{V^{2/3}}+\frac{p^{3/2}X^{1/2}}{V^2}+p^{5/6}X^{1/2}\right)(TVpXY)^\epsilon,
\end{equation} where $C_{8na^2,p,a}$ will soon be defined in \ref{eqn:C_definition} and $V$ is another cutoff parameter. Our plan is as follows:
\begin{itemize}
    \item Small $a$, small $n$: in Section \ref{section:small_a_small_n}, we break $A(p)\leq x\leq B(p)$ into intervals of length $8na^2$ to evaluate the inner most sum in \ref{eqn:G_num,1_after_squarefree_sieve_and_PV}. Fixing $n,a$, we expect some equidistribution of $\Legendre{x^2-4p}{n}$ with respect to $x$ satisfying $\ONE(x)=1$ and $a^2\mid(4p-x^2)$; thus we define \begin{equation}\label{eqn:C_definition}
        C_{8na^2,p,a}:=\sum_{\substack{0\leq x<8na^2\\a^2\mid(4p-x^2)}}\ONE(x)\Legendre{x^2-4p}{n},
    \end{equation} and we will show that \begin{multline}\label{eqn:small_a_small_n_goal}
        \frac{2\sqrt{pX}}{\pi}\sum_{n\leq T}\frac{1}{n}\sum_{a\leq V}\mu(a)\sum_{\substack{A(p)\leq x\leq B(p)\\a^2\mid(4p-x^2)}}\ONE(x)\Legendre{x^2-4p}{n}\\=\frac{2\sqrt{pX}}{\pi}\Delta(p)\sum_{n\leq T}\sum_{a\leq V}\frac{\mu(a)}{8n^2a^2}C_{8na^2,p,a}+O\left(T^{1/2+\epsilon}V^{1+\epsilon}\sqrt{pX}\right).
    \end{multline} This case gives the main term.
    \item Small $a$, large $n$: we note that exact evaluation in Section \ref{section:eval_of_C} will show that \begin{equation}\label{eqn:bound_on_C}
        C_{8na^2,p,a}\ll\frac{a^\epsilon n}{k(n)},
    \end{equation} where $k(n)$ is the square-free part of $n$. Using this and \ref{eqn:Delta(p)_asymp}, we have \begin{align}
        \frac{2\sqrt{pX}}{\pi}\Delta(p)\sum_{n>T}\sum_{a\leq V}\frac{\mu(a)}{8n^2a^2}C_{8na^2,p,a}&\ll X^{1/2}Y\sum_{n>T}\sum_{a\leq V}\frac{1}{n^2a^2}\frac{a^\epsilon n}{k(n)}\nonumber\\
        &\ll X^{1/2}Y\sum_{k=1}^\infty\sum_{r>\sqrt{T/k}}\frac{1}{k^2r^2}\nonumber\\
        &\ll T^{-1/2}X^{1/2}Y\label{eqn:small_a_large_n_goal},
    \end{align} where we wrote $n=kr^2$ with $k=k(n)$.
    
    \item Large $a$: similarly, we have \begin{align}
        \frac{2\sqrt{pX}}{\pi}\Delta(p)\sum_{n=1}^\infty\sum_{a>V}\frac{\mu(a)}{8n^2a^2}C_{8na^2,p,a}&\ll X^{1/2}Y\sum_{n=1}^\infty\sum_{a>V}\frac{1}{n^2a^2}\frac{a^\epsilon n}{k(n)}\nonumber\\
        &\ll V^{-1+\epsilon}X^{1/2}Y\sum_{k=1}^\infty\sum_{r=1}^\infty\frac{1}{k^2r^2}\nonumber\\
        &\ll V^{-1+\epsilon}X^{1/2}Y\label{eqn:large_a_goal1}.
    \end{align} On the other hand, the sieve methods in \cite{Friedlander-Iwaniec} (see p.388, Equation 4.1) gives that \begin{align}
        &\quad\ \frac{2\sqrt{pX}}{\pi}\sum_{n\leq T}\frac{1}{n}\sum_{V<a\leq\sqrt{X+Y}}\mu(a)\sum_{\substack{A(p)\leq x\leq B(p)\\a^2\mid(4p-x^2)}}\ONE(x)\Legendre{x^2-4p}{n}\nonumber\\
        &\ll T^\epsilon\sqrt{pX}\sum_{a>V}\#\{A(p)\leq x\leq B(p):x^2-4p\equiv0\Mod{a^2}\}\nonumber\\
        &\ll\sqrt{pX}\left(\frac{Mp^{1/6}}{V^{2/3}}+\frac{p^{1/3}}{V^{1/3}}+\frac{p^{1/2}}{M}+\frac{M^2}{V^2}+\frac{M^{4/3}}{p^{1/3}}\right)(Tp)^\epsilon &&(\text{with }M=B(p)\asymp p^{1/2})\nonumber\\
        &\ll\left(\frac{p^{7/6}X^{1/2}}{V^{2/3}}+\frac{p^{3/2}X^{1/2}}{V^2}+p^{5/6}X^{1/2}\right)(Tp)^\epsilon.\label{eqn:large_a_goal2}
    \end{align} 
\end{itemize} Putting the above together gives \ref{eqn:main_term} and \ref{eqn:Err_3}.

Along with $\Err_2$ and using the asymptotic formula $\Delta(p)=\frac{Y}{2\sqrt{4p-X}}+O_{\delta_\xi}\left(p^{-3/2}Y^2\right)$ from \ref{eqn:Delta(p)_asymp}, we arrive at \begin{equation}\label{eqn:G_num_1_only_before_double_sum}
    G_{\num,1}^+=\frac{Y\sqrt{p}}{\pi\sqrt{4\xi-1}}\sum_{n=1}^\infty\sum_{a=1}^\infty\frac{\mu(a)}{8n^2a^2}C_{8na^2,p,a}+\Err,
\end{equation} with \begin{align}
    \Err&\ll_{\delta_\xi}\Err_2+\Err_3+\sqrt{pX}\cdot p^{-3/2}Y^2\label{eqn:Err_cumulative_Theorem_3.1}\\
    &\ll\begin{multlined}[t]
        \Bigg(T^{1/2}Vp^{1/2}X^{1/2}+\frac{X^{1/2}Y}{T^{1/2}}+\frac{X^{1/2}Y}{V}+\frac{p^{7/6}X^{1/2}}{V^{2/3}}+\frac{p^{3/2}X^{1/2}}{V^2}+p^{5/6}X^{1/2}\\\frac{Up^{1/2}X^{\frac{8(1-\sigma)}{3-2\sigma}+1}}{T}+X^{1/2}Y\left(T^{\sigma-1}+U^{-1}\right)+\frac{Y^2}{X^{1/2}}+\frac{X^{1/2}Y^2}{p}\Bigg)(TUVpXY)^\epsilon
    \end{multlined}\nonumber\\
    &\ll\left(p^{5/7}X^{(7+\sqrt{37})/21}Y^{1/7}+\frac{Y^2}{X^{1/2}}\right)(pXY)^\epsilon\label{eqn:thm1_error}
\end{align} upon choosing\footnote{The main term in $\Err$ comes from the terms $T^{1/2}Vp^{1/2}X^{1/2},\frac{p^{7/6}X^{1/2}}{V^{2/3}},\frac{Up^{1/2}X^{\frac{8(1-\sigma)}{3-2\sigma}+1}}{T},$ and $\frac{X^{1/2}Y}{U}$ (up to $\epsilon$-powers), and $T,U,V,\sigma$ are selected so that they are equal.}  
\begin{align*}
    \sigma&=\frac{46-\sqrt{37}}{42}\approx0.9504,\\
    T&=p^{-13/14}X^{(-35+10\sqrt{37})/42}Y^{5/7}\approx p^{-0.929}X^{0.615}Y^{0.714},\\
    U&=p^{-5/7}X^{(7-2\sqrt{37})/42}Y^{6/7}\approx p^{-0.714}X^{-0.123}Y^{0.857},\\
    V&=p^{19/28}X^{(7-2\sqrt{37})/28}Y^{-3/14}\approx p^{0.679}X^{-0.184}Y^{-0.214}.
\end{align*} This is the error bound in Theorem \ref{thm:thm_y=1}. Finally, we compute in Section \ref{section:eval_of_C} that $$\sum_{n=1}^\infty\sum_{a=1}^\infty\frac{\mu(a)}{8n^2a^2}C_{8na^2,p,a}=c(p),$$ so \ref{eqn:G_num_1_only_before_double_sum} gives the correct main term in Theorem \ref{thm:thm_y=1}.


\subsection{Small $a$, Small $n$}\label{section:small_a_small_n}
The goal of this section is to prove \ref{eqn:small_a_small_n_goal}. We first consider the sum \begin{equation*}
    \sum_{\substack{A(p)\leq x\leq B(p)\\a^2\mid(4p-x^2)}}\ONE(x)\Legendre{x^2-4p}{n}.
\end{equation*} Note that $\ONE(x),\ONE\{a^2\mid(4p-x^2)\}$ and $\Legendre{x^2-4p}{n}$ only depend on $x$ modulo $8na^2$. This motivates the definition \begin{equation}\label{eqn:C_definition_2}
    C_{8na^2,p,a}:=\sum_{\substack{0\leq x<8na^2\\a^2\mid(4p-x^2)}}\ONE(x)\Legendre{x^2-4p}{n}=\begin{cases}
    C_{8n,p}R_{a,p} &\text{if $(n,a)=1$,}\\
    0 &\otherwise,
\end{cases} 
\end{equation} as in \ref{eqn:C_definition}, where \begin{equation}\label{eqn:C_8n,p_definition}
    C_{8n,p}:=\sum_{0\leq x<8n}\ONE(x)\Legendre{x^2-4p}{n}\AND R_{a,p}\equiv\#\{x\mod{a^2}:x^2\equiv4p\Mod{a^2}\}.
\end{equation}

\begin{lem}\label{lem:eval_of_Rap}
Assume that $a$ is odd. We have $$R_{a,p}=\begin{cases}
    2^k &a=q_1^{e_1}\cdots q_k^{e_k},\Legendre{p}{q_j}=1\ \forall j,\\
    0 &\otherwise.
\end{cases}$$ This implies $R_{a,p}\ll a^\epsilon$ uniformly in $p$.
\end{lem}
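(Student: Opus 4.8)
The plan is to evaluate $R_{a,p} = \#\{x \bmod a^2 : x^2 \equiv 4p \pmod{a^2}\}$ by reducing to the prime-power case via the Chinese Remainder Theorem and then counting square roots of $4p$ modulo odd prime powers. Since $a$ is odd, write $a = q_1^{e_1}\cdots q_k^{e_k}$; then $a^2 = q_1^{2e_1}\cdots q_k^{2e_k}$ and by CRT the number of solutions factors as $R_{a,p} = \prod_{j=1}^k \#\{x \bmod q_j^{2e_j} : x^2 \equiv 4p \pmod{q_j^{2e_j}}\}$. So it suffices to count solutions to $x^2 \equiv 4p \pmod{q^{2e}}$ for a single odd prime $q$. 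Note $4$ is a unit mod $q^{2e}$, so this is equivalent to counting square roots of $p$ (times a fixed unit), and in particular we may assume $q \nmid p$ — indeed if $q \mid p$ then, since $p$ is prime and $q$ is odd, $q \ne p$ would force $q \nmid 4p$, a contradiction unless $q = p$; the case $q = p$ would require $p^2 \mid 4p$, impossible for $p > 1$, so $p$ itself cannot divide $a$ and we always have $q \nmid p$ for any $q \mid a$.

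Next I would invoke the standard structure of square roots in $(\ZZ/q^m\ZZ)^\times$ for odd $q$: this group is cyclic, so the equation $y^2 \equiv c$ with $c$ a unit has exactly $2$ solutions if $c$ is a quadratic residue mod $q$ (equivalently $\Legendre{c}{q} = 1$) and $0$ solutions otherwise; crucially, by Hensel's lemma the count does not depend on the exponent $m \geq 1$. Applying this with $m = 2e_j$ and $c = 4p$, and using $\Legendre{4p}{q_j} = \Legendre{p}{q_j}$ since $\Legendre{4}{q_j} = 1$, each local factor is $2$ when $\Legendre{p}{q_j} = 1$ and $0$ otherwise. Multiplying over $j$ gives $R_{a,p} = 2^k$ if $\Legendre{p}{q_j} = 1$ for all $j$, and $R_{a,p} = 0$ if the Legendre symbol is $-1$ for some $j$; recall that $\Legendre{p}{q_j} \neq 0$ throughout by the remark above. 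This is exactly the claimed formula. Finally, since $k = \omega(a) \ll_\epsilon a^\epsilon$, we get $R_{a,p} = 2^{\omega(a)} \ll a^\epsilon$ uniformly in $p$.

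The only real subtlety is the Hensel-lifting step: one must check that a simple square root mod $q$ lifts uniquely to each $q^m$, which requires that the derivative $2y$ of $y^2 - c$ be a unit mod $q$ — true because $q$ is odd and $y$ is a unit (as $c$ is a unit). This is where oddness of $a$ is essential; the analogous count mod powers of $2$ is genuinely different, which is why the $y$ even and $2 \mid a$ cases are handled separately elsewhere. Everything else is bookkeeping with CRT and multiplicativity.
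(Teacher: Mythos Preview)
Your proof is correct and follows essentially the same route as the paper: CRT to reduce to prime-power moduli, the valuation obstruction when $q_j=p$, and Hensel lifting for the remaining odd primes. Two small wording quibbles: ``$p$ itself cannot divide $a$'' should read ``if $p\mid a$ then the local factor vanishes'' (nothing forbids $p\mid a$ a priori), and the bound you want at the end is $2^{\omega(a)}\ll a^\epsilon$, not $\omega(a)\ll a^\epsilon$; both are standard, but the latter does not by itself imply the former.
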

\begin{proof}
By the Chinese remainder theorem, we have $$R_{a,p}=R_{q_1^{e_1},p}\cdots R_{q_k^{e_k},p}.$$ Note $R_{p^e,p}=0$, since $x^2\equiv 4p\Mod{p^2}$ implies $v_p(x^2)=1$, which is absurd. Otherwise, since $q_j$ is odd, Hensel's Lemma implies $R_{q_j^{e_j},p}=R_{q_j,p}$ which is $2$ if $\Legendre{p}{q_j}=1$ and $0$ otherwise. It follows that $R_{a,p}\leq 2^{\omega(a)}\ll a^\epsilon$.
\end{proof}

We have the following equidistribution result.
\begin{prop}\label{prop:sum_of_Legendre}
We have $$\sum_{\substack{A\leq x\leq B\\a^2\mid(4p-x^2)}}\ONE(x)\Legendre{x^2-4p}{n}=\frac{B-A}{8na^2}C_{8na^2,p,a}+O\left(\frac{a^{\epsilon}n\log n}{\sqrt{k(n)}}\right),$$ where $k(n)$ is the square-free part of $n$.
\end{prop}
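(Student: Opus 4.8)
The plan is to exploit that the summand depends on $x$ only through its residue class modulo $8na^2$, together with a character-sum bound to control the error from incomplete residue intervals. Write $Q = 8na^2$. For each residue $r \bmod Q$ with $a^2 \mid (4p - r^2)$ and $r$ odd, the value $\legendre{x^2-4p}{n}$ is the constant $\legendre{r^2-4p}{n}$ for all $x \equiv r \pmod Q$. Thus $\sum_{A \le x \le B,\ a^2 \mid (4p-x^2)} \ONE(x)\legendre{x^2-4p}{n}$ equals $\sum_{r} \legendre{r^2-4p}{n} \cdot \#\{A \le x \le B : x \equiv r \pmod Q\}$, where $r$ ranges over the (at most $C_{Q,p,a}$-many, by the factorization in \ref{eqn:C_definition_2}) admissible residues. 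The box count is $\frac{B-A}{Q} + O(1)$ for each such $r$, so the main term is $\frac{B-A}{Q} \sum_r \legendre{r^2-4p}{n} = \frac{B-A}{8na^2} C_{8na^2,p,a}$ as claimed, and the error is $O$ of the number of admissible residues $r$ times $1$ — but that naive bound is too weak, so one must be more careful.

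The key point is that the crude count $\#\{r \bmod Q : a^2 \mid 4p - r^2,\ 2 \nmid r\}$ can be as large as $\asymp a^{\epsilon} n$ (one residue mod $a^2$ from $R_{a,p}$, times up to $8n$ choices mod $8n$), and multiplying by the $O(1)$ per-box error would give $O(a^\epsilon n)$, worse than the stated $O(a^\epsilon n (\log n)/\sqrt{k(n)})$. To recover the stronger bound one should not discard the cancellation in $\legendre{r^2-4p}{n}$ when performing the ``fractional part'' correction. The standard device: write $\#\{A \le x \le B : x \equiv r \pmod Q\} = \frac{B-A}{Q} + \psi\!\left(\frac{A-r}{Q}\right) - \psi\!\left(\frac{B-r}{Q}\right)$ where $\psi(t) = \{t\} - \tfrac12$ (or the sawtooth), expand $\psi$ into its Fourier/Fejér approximation, and bound the resulting exponential sums $\sum_r \legendre{r^2-4p}{n} e(hr/Q)$ by Gauss/Weil-type estimates for mixed character–additive sums modulo $n$ (after separating the mod-$a^2$ and mod-$8$ parts by CRT). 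Each such sum is $O(\sqrt{n} \cdot \text{(something } a^\epsilon\text{)})$ by Weil's bound for the relevant curve $y^2 = x^2 - 4p$, and the square-free part $k(n)$ enters because for $n = k(n) m^2$ the character $\legendre{\cdot}{n}$ only "sees" $k(n)$, collapsing the Gauss-sum gain to $\sqrt{k(n)}$ while contributing an extra factor of $m = \sqrt{n/k(n)}$ for the square part; summing the Fourier coefficients contributes the $\log n$.

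The main obstacle I expect is precisely this last step: getting the square-free-part-sensitive bound $\sqrt{k(n)}$ rather than $\sqrt{n}$, and making the mixed additive–multiplicative character sum estimate uniform in $p$ and clean in its dependence on $a$. This requires decomposing $n$ multiplicatively, handling prime powers $\ell^j \| n$ separately (where the character $\legendre{\cdot}{\ell^j}$ is trivial or a genuine quadratic character depending on the parity of $j$), invoking Weil's bound only on the square-free part and elementary evaluation on the square part, and then reassembling via CRT while tracking the modulus $8na^2$ carefully — including the mild complication from the $\ONE(x)$ factor which just restricts to odd residues mod $8$ and costs nothing. Everything else (the box count, the CRT splitting into $C_{8n,p} R_{a,p}$, the trivial bound $R_{a,p} \ll a^\epsilon$ from Lemma \ref{lem:eval_of_Rap}) is routine.
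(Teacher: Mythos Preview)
Your approach is correct and essentially matches the paper's: both reduce to bounding an incomplete sum of $\Legendre{x^2-4p}{n}$ over an interval shorter than the period $8na^2$, and both extract the $\sqrt{k(n)}$ saving by completing the sum, factoring over prime powers via CRT, and applying Weil's bound on the squarefree part while treating even prime-power factors elementarily. The only organizational difference is that the paper first splits $[A,B]$ into $\lfloor (B-A)/(8na^2)\rfloor$ full periods (each contributing exactly $C_{8na^2,p,a}$) plus one leftover interval of length $\le 8na^2$, and then bounds that single incomplete sum by invoking Lemmas~2.1--2.2 of \cite{Sarnak}; your sawtooth/Fourier decomposition handles the fractional parts in one stroke, which is equivalent and is in fact what those lemmas do under the hood.
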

\begin{proof}
We may assume $(a,n)=1$ since otherwise all terms vanish. We first show $$\sum_{\substack{A\leq x\leq B\\a^2\mid(4p-x^2)}}\ONE(x)\Legendre{x^2-4p}{n}\ll\frac{a^\epsilon n\log n}{\sqrt{k(n)}}$$ when $B-A\leq 8na^2$. This is achieved using the same techniques as in Lemma 2.1 of \cite{Sarnak}, p.336, with the error bound multiplied by $R_{a,p}$ and the $\log n$ term replaced by $\log(na^2)$ during the application of Lemma 2.2 of \cite{Sarnak}, pp.338-339. Note that $a^2\mid(4p-x^2)$ and $4p-x^2$ is odd when $\ONE(x)=1$, so $a$ is odd and Lemma \ref{lem:eval_of_Rap} applies. The general result follows from splitting $[A,B]$ into intervals of length $8na^2$ each, with at most one exceptional interval.
\end{proof}

Now \ref{eqn:small_a_small_n_goal} follows easily; the main term is obvious, and the error term satisfies \begin{align*}
    \Err&\ll\sqrt{pX}\sum_{n\leq T}\frac{1}{n}\sum_{a\leq V}\frac{a^\epsilon n\log n}{\sqrt{k(n)}}\\
    &\ll V^{1+\epsilon}\sqrt{pX}\sum_{r\leq\sqrt{T}}\sum_{k\leq T/r^2}\frac{(kr)^\epsilon}{\sqrt{k}}\\
    &\ll T^{1/2+\epsilon}V^{1+\epsilon}\sqrt{pX}.
\end{align*}

\subsection{Evaluation and Summation of $C_{8na^2,p,a}$}\label{section:eval_of_C}
The goal of this section is to show $$\sum_{n=1}^\infty\sum_{a=1}^\infty\frac{\mu(a)}{8n^2a^2}C_{8na^2,p,a}=\frac{p+1}{3p}\prod_{\ell>2,\Legendre{p}{\ell}=1}\left(1-2\ell^{-2}-\frac{2\ell^{-3}}{1-\ell^{-2}}\right)=c(p),$$ as in \ref{eqn:c(p)_definition}.

Recall from \ref{eqn:C_definition_2}, \ref{eqn:C_8n,p_definition}, and Lemma \ref{lem:eval_of_Rap} that $$C_{8na^2,p,a}=\begin{cases}
    C_{8n,p}R_{a,p} &\text{if $(n,a)=1$,}\\
    0 &\otherwise,
\end{cases}$$ where $$C_{8n,p}=\sum_{0\leq x<8n}\ONE(x)\Legendre{x^2-4p}{n},\qquad R_{a,p}=\begin{cases}
    2^k &a=q_1^{e_1}\cdots q_k^{e_k},\Legendre{p}{q_j}=1\ \forall j,\\
    0 &\otherwise.
\end{cases}$$ Let $n=2^ep^{e'}p_1^{e_1}\cdots p_k^{e_k}$ (with $e$ and $e'$ possibly zero), then $C_{8n,p}$ splits as a product: \begin{multline}\label{eqn:C_8n_p_product}
    C_{8n,p}={\underbrace{\vphantom{\sum_{x\mod{p^{e_0}}}\Legendre{x^2}{p}^{e_0}}\sum_{x\text{ mod }2^{e+3}\text{ odd}}\Legendre{x^2-4p}{2}^e}_{\text{4 if $e=0$, $4(-2)^e$ otherwise}}}
    \times{\underbrace{\sum_{x\mod p^{e'}}\Legendre{x^2-4p}{p}^{e'}}_{\text{1 if $e'=0$, $(p-1)p^{e'-1}$ otherwise}}}\\
    \times{\prod_{e_j\text{ even}}p_j^{e_j-1}\underbrace{\left[\sum_{x\mod{p_j}}\Legendre{x^2-4p}{p_j}^2\right]}_{p_j-1-\Legendre{p}{p_j}}}
    \times{\prod_{e_j\text{ odd}}p_j^{e_j-1}\underbrace{\left[\sum_{x\mod{p_j}}\Legendre{x^2-4p}{p_j}\right]}_{-1\ (\text{see Lemma \ref{lem:sum_of_(x^2-a/p)_over_residue_class}})}}.
\end{multline} We note that this along with Lemma \ref{lem:eval_of_Rap} gives \ref{eqn:bound_on_C}, i.e. $$C_{8na^2,n,p}\ll\frac{a^\epsilon n}{k(n)},$$ as promised. 

Now we compute \begin{align}
    \sum_{n=1}^\infty\sum_{a=1}^\infty\frac{\mu(a)}{8n^2a^2}C_{8na^2,p,a}&=\sum_{n=1}^\infty\frac{C_{8n,p}}{8n^2}\sum_{a\text{ odd},(n,a)=1}\frac{\mu(a)R_{a,p}}{a^2}\nonumber\\
    &=\sum_{n=1}^\infty\frac{C_{8n,p}}{8n^2}\prod_{\ell\nmid 2n,\Legendre{p}{\ell}=1}\left(1-2\ell^{-2}\right)\nonumber\\
    &=\prod_{\ell>2,\Legendre{p}{\ell}=1}\left(1-2\ell^{-2}\right)\cdot\sum_{n=1}^\infty\frac{C_{8n,p}}{8n^2}\prod_{\ell>2,\ell\mid n,\Legendre{p}{\ell}=1}\frac{1}{1-2\ell^{-2}}.\label{eqn:computation_c(p)_mid}
\end{align} The sum over $n$ is multiplicative, and the local factors are:
\begin{itemize}
    \item At 2 (along with the factor $1/8$): $$\frac{1}{8}\left(4+\sum_{e=1}^\infty\frac{4(-2)^e}{2^{2e}}\right)=\frac{1}{3}.$$ 
    \item At $p$: $$1+\sum_{e'=1}^\infty\frac{(p-1)p^{e'-1}}{p^{2e'}}=\frac{p+1}{p}.$$ 
    \item At $p_j>2,\Legendre{p}{p_j}=1$: $$1+\frac{1}{1-2p_j^{-2}}\left(\sum_{e_j\text{ odd}}\frac{-p_j^{e_j-1}}{p_j^{2e_j}}+\sum_{e_j\text{ even}}\frac{p_j^{e_j-1}(p_j-2)}{p_j^{2e_j}}\right)=1-\frac{2p_j^{-3}}{\left(1-2p_j^{-2}\right)\left(1-p_j^{-2}\right)}.$$
    \item At $p_j,\Legendre{p}{p_j}=-1$: $$1+\frac{1}{1-2p_j^{-2}}\left(\sum_{e_j\text{ odd}}\frac{-p_j^{e_j-1}}{p_j^{2e_j}}+\sum_{e_j\text{ even}}\frac{p_j^{e_j}}{p_j^{2e_j}}\right)=1.$$
\end{itemize} Putting these into \ref{eqn:computation_c(p)_mid} gives $$\sum_{n=1}^\infty\sum_{a=1}^\infty\frac{\mu(a)}{8n^2a^2}C_{8na^2,p,a}=\prod_{\ell>2,\Legendre{p}{\ell}=1}\left(1-2\ell^{-2}\right)\cdot\frac{p+1}{3p}\prod_{\ell>2,\Legendre{p}{\ell}=1}\left(1-\frac{2\ell^{-3}}{\left(1-2\ell^{-2}\right)\left(1-\ell^{-2}\right)}\right)=c(p),$$ as desired.
\section{Evaluation of $G_{\num,y}^+$}
In this section, we generalize the work for $y=1$ and evaluate $G_{\num,y}^+$ for arbitrary $y$. Qualitatively there is only one major difference: note that the norm equation $x^2+Dy^2=4p$ imposes arithmetic restrictions on $p$ modulo primes dividing $y$, e.g. $\Legendre{p}{q}=1$ for all odd prime $q\mid y$, and to account for this we introduce an indicator function $\delta_y(p)$ which is periodic with period $4\rad(y)$. Computationally, the strategy is the same, but the evaluation of the more term is more arduous.

Fixing some $y$, we will show:
\begin{thm}\label{thm:thm_y=y}
Assume $p,X,Y$ tend to $\infty$ under the assumptions of Theorem \ref{thm:thm1}; in particular, we assume $\xi=p/X$ is bounded away from $y^2/4$, i.e. there exists $\delta_\xi>0$ such that $\xi-y^2/4\geq\delta_\xi$, then \begin{equation}\label{eqn:thm_y=y}
    G^+_{\num,y}(p,X,Y)=\frac{\vartheta(y)\delta_y(p)c(p)}{\pi\sqrt{4\xi-y^2}}Y\sqrt{p}+O_{\delta_\xi}\left[y^2\left(p^{5/7}X^{(7+\sqrt{37})/21}Y^{1/7}+\frac{Y^2}{X^{1/2}}\right)(ypXY)^\epsilon\right],
\end{equation} where \begin{align}
    \delta_y(p)&=\begin{cases}
        \ONE\left\{\Legendre{p}{q}=1\right\} &\text{if $y=q^k$ where $q$ is an odd prime,}\\
        \ONE\{p\equiv3\Mod{4}\} &\text{if $y=2$},\\
        \ONE\{p\equiv5\Mod{8}\} &\text{if $y=4$},\\
        \ONE\{p\equiv1\Mod{8}\} &\text{if $y=2^\nu$ and $\nu\geq3$,}
    \end{cases}\\
    &\text{(extended multiplicatively in $y$)}\nonumber\\
    \vartheta(y)&=2^{\omega(y)+\min(v_2(y),2)}\prod_{q\mid y\text{ odd}}\left(1+\frac{2q^2+q-1}{q^4-3q^2-2q+2}\right),
\end{align} as in \ref{eqn:delta_y(p)_definition} and \ref{eqn:vartheta(y)_definition}.
\end{thm}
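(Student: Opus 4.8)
The plan is to run the argument of Section~\ref{section:contribution_y=1} essentially verbatim, carefully tracking the extra congruence restrictions created by fixing the value of $y$ in the norm equation $x^2+Dy^2=4p$. Since for a fixed $D\in\mcD_p$ this equation has at most one solution in positive integers (the proposition identifying $\nu(D,p)$ with $\ONE_\mfp$), the $D$ contributing to $G^+_{\num,y}$ are exactly those of the form $D_x:=(4p-x^2)/y^2$ for a positive integer $x$ with $D_x\in\mcD_p$, each occurring once. I set $A_y(p):=\sqrt{4p-(X+Y)y^2}$ and $B_y(p):=\sqrt{4p-Xy^2}$; because $\xi$ is bounded away from $y^2/4$ by $\delta_\xi$ and $Y=o(X)$, these are well defined for large parameters, $A_y(p),B_y(p)\asymp_{\delta_\xi}\sqrt p$, and $[A_y(p),B_y(p)]$ has length $\Delta_y(p)=\frac{Yy^2}{2\sqrt{4p-Xy^2}}+O_{\delta_\xi}(y^4p^{-3/2}Y^2)$ by the Taylor expansion used for~\ref{eqn:Delta(p)_bound}. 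Dirichlet's class number formula together with $\sqrt{D_x}=\sqrt X\,(1+O(Y/X))$ then gives
$$G^+_{\num,y}=\frac{2\sqrt{pX}}{\pi}\sum_{A_y(p)\le x\le B_y(p)}\ONE_y(x)\,\mu^2(D_x)\,L(1,\chi_{-D_x})+O_{\delta_\xi}(X^{-1/2+\epsilon}y^2Y^2),$$
where $\ONE_y(x)$ indicates the congruence conditions modulo $4y^2$ forcing $D_x$ to be an integer $\equiv3\pmod4$ (the conditions $D_x>3$, $D_x$ square-free and $\Legendre{-D_x}{p}=1$ being controlled by the range of $x$, by $\mu^2$, and automatically, respectively). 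The decisive new point is that $\ONE_y(x)=1$ for some $x$ only if the system $x^2\equiv4p\pmod{y^2}$, $D_x\equiv3\pmod4$ is solvable in $x$, which by the Chinese remainder theorem and Hensel's lemma happens exactly when $\delta_y(p)=1$: for an odd prime $q\mid y$ this reads $\Legendre{p}{q}=1$, while the $2$-part gives the cases $y=2,4,2^\nu$ ($\nu\ge3$) of~\ref{eqn:delta_y(p)_definition}, and the general case is multiplicative. Hence if $\delta_y(p)=0$ the sum is empty and the claimed main term, which carries the factor $\delta_y(p)$, is trivially correct.

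Assuming $\delta_y(p)=1$, I would then reproduce Sections~\ref{section:small_a_small_n}--\ref{section:eval_of_C}: insert the square-free sieve $\mu^2(D_x)=\sum_{a^2\mid D_x}\mu(a)$, truncate $L(1,\chi_{-D_x})$ at height $T$ via Lemma~\ref{lem:density_theorem}, and split the triple sum over $n\le T$, $a$, and $x\in[A_y(p),B_y(p)]$ into the ranges ``small $a$, small $n$'' (the main term), ``small $a$, large $n$'', and ``large $a$'' (bounded via Lemma~\ref{lem:eval_of_Rap} and the sieve estimate of~\cite{Friedlander-Iwaniec}), exactly as for $y=1$. The changes are essentially bookkeeping, with one point of care: the correct coefficient attached to $x$ is now the Kronecker symbol $\chi_{-D_x}(n)$, which equals $\Legendre{x^2-4p}{n}$ when $(n,y)=1$ but in general is only periodic in $x$ of period dividing $8ny^2$, and the condition $a^2\mid D_x$ has period $a^2y^2$; thus the common period is at most $8na^2y^2$, and the block decomposition behind the analogue of Proposition~\ref{prop:sum_of_Legendre} as well as the sieve bounds each acquire bounded powers of $y$. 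Carrying the same cutoff parameters $T,U,V,\sigma$ through the optimization yields the error term $O_{\delta_\xi}(y^2(p^{5/7}X^{(7+\sqrt{37})/21}Y^{1/7}+Y^2X^{-1/2})(ypXY)^\epsilon)$ of~\ref{eqn:thm_y=y}. After the factors $y^2$ coming from $\Delta_y(p)$ and from the period cancel, and using $\sqrt{4p-Xy^2}=\sqrt X\sqrt{4\xi-y^2}$, the main term becomes $\frac{Y\sqrt p}{\pi\sqrt{4\xi-y^2}}\sum_n\sum_a\frac{\mu(a)}{8n^2a^2}C^{(y)}_{n,a,p}$ with $C^{(y)}_{n,a,p}:=\sum_{0\le x<8na^2y^2,\ a^2\mid D_x}\ONE_y(x)\,\chi_{-D_x}(n)$, so it remains to prove
$$\sum_{n=1}^\infty\sum_{a=1}^\infty\frac{\mu(a)}{8n^2a^2}\,C^{(y)}_{n,a,p}=\vartheta(y)\,\delta_y(p)\,c(p).$$

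This last identity I would prove by the multiplicative factorization of~\ref{eqn:C_8n_p_product}. The Dirichlet series in $n$ and $a$ factors as an Euler product; at every prime $\ell\nmid2y$ the local factor is literally the one computed in Section~\ref{section:eval_of_C}, so those primes reassemble into $c(p)$ verbatim. The new content is at $\ell=2$ and at primes $\ell\mid y$. For an odd prime $q\mid y$ with $q\nmid n$, the extra constraint $x^2\equiv4p\pmod{q^{2v_q(y)}}$ multiplies the local count by $R_{q^{v_q(y)},p}=2$ (Hensel, using $\Legendre{p}{q}=1$), contributing a factor $2$ to $\vartheta(y)$; when moreover $q\mid n$, one must re-evaluate the restricted sum $\sum_x\Legendre{x^2-4p}{q}^{v_q(n)}$ over the residues satisfying $x^2\equiv4p\pmod{q^{2v_q(y)}}$ --- equivalently, handle $\chi_{-D_x}(q^{v_q(n)})$, which vanishes precisely when $q\mid D_x$ --- with a short case split on $v_q(n)$ versus $v_q(y)$ and on the parity of $v_q(n)$ as in~\ref{eqn:C_8n_p_product}, and summing the resulting local Dirichlet series produces the rational factor $1+\frac{2q^2+q-1}{q^4-3q^2-2q+2}$. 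At $\ell=2$ one likewise recomputes the $2$-adic local factor subject to the constraint imposed by $v_2(y)$, obtaining the power $2^{\min(v_2(y),2)}$ together with the extra factor $2$ counted by $\omega(y)$ when $2\mid y$, and reading off the residue class of $p$ confirms the cases $y=2$, $y=4$, $y=2^\nu$ with $\nu\ge3$. Multiplying all local factors gives $\vartheta(y)$, the accumulated conditions on $p$ assemble into $\delta_y(p)$, the remaining primes give $c(p)$, and~\ref{eqn:thm_y=y} follows.

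The step I expect to be the main obstacle is exactly this local analysis at primes $\ell\mid\gcd(y,n)$ and at $\ell=2$: there one is summing a quadratic residue symbol over a residue class cut out by a square congruence to a \emph{higher} modulus, so the inner sums no longer collapse as cleanly as in~\ref{eqn:C_8n_p_product}, and pinning down the $2$-adic factor exactly --- matching the power $2^{\min(v_2(y),2)}$ and the threefold split of $\delta_y(p)$ --- is the most delicate, though entirely elementary and finite, piece of the computation. A secondary nuisance is keeping the powers of $y$ under control in the error terms, especially in the large-$a$ sieve range; but since $y<2\sqrt\xi$ is forced and $\xi$ grows at most polynomially, these contribute only the benign factor $y^{O(1)}$ recorded in~\ref{eqn:thm_y=y}.
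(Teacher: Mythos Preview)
Your plan is correct and mirrors the paper's proof closely: the same generalized intervals $[A_y(p),B_y(p)]$, the same indicator $\ONE_y$, the same square-free sieve and $L$-function truncation, the same three-range error analysis with an extra factor of $y^{2+\epsilon}$, and the reduction to evaluating a double sum that equals $\vartheta(y)\delta_y(p)c(p)$ after the $y^2$ from $\Delta_y(p)$ cancels against the $y^2$ in the period.

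The one genuine difference is how you compute the arithmetic constant. The paper proves $c_y(p)=\frac{\vartheta(y)}{y^2}\delta_y(p)c(p)$ \emph{inductively}: it peels off one odd prime $q\mid y$ at a time, writing $\ty=y/q^{\nu}$ and comparing the local Euler factor of $c_y$ at $q$ (computed from $\sigma_I$) to that of $c_{\ty}$ at $q$ (computed from $\sigma_{\RomanII}$), then handles the 2-part separately. You instead propose a direct Euler-product evaluation, computing the full local factor at each $\ell\mid 2y$ in one shot. Both approaches are valid and elementary; the paper's inductive route has the advantage that it isolates exactly the ratio needed and avoids recomputing the $\ell\nmid 2y$ factors, while your direct approach is conceptually cleaner but requires you to keep careful track of the case $\ell\mid(a,y)$ (the paper records that $\sigma_I(q,\nu,\mu,f)=0$ whenever $\mu=v_q(a_y)>0$, which is the content of your ``$q\mid a$'' subcase and is easy to miss). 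Your identification of the 2-adic computation as the most delicate step matches the paper's experience exactly.
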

Again the main term is only present for $\xi>y^2/4$; otherwise, since solutions $x^2+Dy^2=4p$ must satisfy $y^2<\frac{4p}{D}\leq4\xi$, we have $G^+_{\num,y}=0$ trivially. 

\begin{proof}[Proof of Theorem \ref{thm:thm1}]
By the above, $G_{\num,y}^+=0$ if $y\geq2\sqrt{\xi}$. Therefore it follows Theorem \ref{thm:thm_y=y} and results from Section \ref{section:denom_and_num_minus} that \begin{align*}
    G(p,X,Y)&=\frac{\sum_{1\leq y<2\sqrt{\xi}}G_{\num,y}^++G_{\num}^-+O\left(p^{1+\epsilon}\right)}{G_{\denom}}\\
    &=c(p)\sum_{1\leq y<2\sqrt{\xi}}\frac{11\zeta(2)}{4A}\sqrt{\frac{\xi}{4\xi-y^2}}\,\vartheta(y)\delta_y(p)-\frac{11\pi}{12A}\sqrt{\xi}+\Err,
\end{align*} where \begin{align*}
    \Err&\ll_{\delta_\xi}\begin{multlined}[t]
        \frac{1}{Y\sqrt{X}}\left[\frac{p}{X}\left(p^{5/7}X^{(7+\sqrt{37})/21}Y^{1/7}+\frac{Y^2}{X^{1/2}}\right)(pXY)^\epsilon+p^{7/10+\epsilon}X^{3/5+\epsilon}+p^{1+\epsilon}\right]\\+Y\sqrt{p}\left[\left(X^{-3/2}+X^{(-29+\sqrt{1801})/84}Y^{-13/7}\right)(XY)^\epsilon\right]
    \end{multlined}\\
    &\ll\left(\frac{p^{12/7}}{X^{(49-2\sqrt{37})/42}Y^{6/7}}+\frac{pY}{X^2}\right)(pXY)^\epsilon.
\end{align*} 

Now recall $Y\sim X^{1-\delta_Y},p\ll X^{1+\delta_p}$. Plugging into the error bound above, we find that $\Err\ll X^{-\delta_0+\epsilon}$, with $$\delta_0=\min\left\{\frac{13-2\sqrt{37}}{42}-\frac{6}{7}\delta_Y-\frac{12}{7}\delta_p,\delta_Y-\delta_p\right\},$$ as given in \ref{eqn:delta_0_definition}.
\end{proof}

\subsection{Overview of Computation}
We first generalize the notations used in Section \ref{section:contribution_y=1}.

The archimedean restriction on $x$ comes from $D\in[X,X+Y]$, which for $y=1$ was given by \ref{eqn:A(p)_B(p)_definition}; likewise for general $y$ we define \begin{equation}\label{eqn:A_y_B_y_definition}
    A_y(p):=\sqrt{4p-y^2(X+Y)},\qquad B_y(p):=\sqrt{4p-y^2X}.
\end{equation} Again by our assumption that $\xi$ is bounded away from $y^2/4$ and $Y=o(X)$, $A_y(p)$ is well-defined when the parameters are large enough and $A_y(p),B_y(p)\asymp\sqrt{p}$; we also compute \begin{align}
    \Delta_y(p)&:=B_y(p)-A_y(p)\nonumber\\
    &=\sqrt{4p-y^2X}\left(1-\sqrt{1-\frac{y^2Y}{4p-y^2X}}\right)\nonumber\\
    &=\frac{y^2Y}{2\sqrt{4p-y^2X}}+O_{\delta_\xi}\left(y^4p^{-3/2}Y^2\right)\label{eqn:Delta_y(p)_bound}\\
    &\asymp y^2p^{-1/2}Y.\label{eqn:Delta_y(p)_asymp}
\end{align} The arithmetic restriction on $x$ and $p$ comes from $x^2+Dy^2=4p$ and $D\equiv3\Mod{4}$, which for the $y=1$ case was given simply by \ref{eqn:ONE_definition}; for general $y$ we define \begin{equation}\label{eqn:ONE_y_definition}
    \ONE_y(x,p):=\ONE\left\{y^2\mid(4p-x^2),(4p-x^2)/y^2\equiv3\Mod{4}\right\}.
\end{equation} Thus $$\text{$x^2+Dy^2=4p$ and $D\in\mcD_p$}\qquad\Longleftrightarrow\qquad\text{$x\in[A_y(p),B_y(p)],\ \ONE_y(x,p)=1$ and $\mu^2(D)=1.$}$$ We further define \begin{align}
    c_y(p)&:=\sum_{n=1}^\infty\sum_{a=1}^\infty\frac{\mu(a)}{8y^2n^2a^2}C^{(y)}_{8y^2na^2,p,a},\label{eqn:c_y(p)_definition}\\
    C_{8y^2na^2,p,a}^{(y)}&:=\sum_{\substack{0\leq x<8y^2na^2\\a^2\mid(4p-x^2)/y^2}}\ONE_y(x,p)\Legendre{(x^2-4p)/y^2}{n}\label{eqn:C^y_8y^2na^2,p,a_definition}\\
    &=\begin{cases}
        0 &\text{if $(n,a)>1$,}\\
        C^{(y)}_{a_y^2\cdot8y^2n,p}R_{a',p} &\text{if $(n,a)=1$, $a=a_ya'$ with $a_y=(y^\infty,a)$,}
    \end{cases}\label{eqn:C^y_8y^2na^2,p,a_definition_2}\\
    C^{(y)}_{a_y^2\cdot8y^2n,p}&:=\sum_{0\leq x<a_y^2\cdot 8y^2n}\ONE_y(x,p)\ONE\{a_y^2y^2\mid(4p-x^2)\}\Legendre{(x^2-4p)/y^2}{n}.\label{eqn:C^y_a_y^28y^2n,p_definition}
\end{align} 

\begingroup
\allowdisplaybreaks
Now, we proceed as before: \begin{align}
    G_{\num,y}^+&=2\sqrt{p}\sum_{A_y(p)\leq x\leq B_y(p)}\ONE_y(x,p)\mu^2\left((4p-x^2)/y^2\right)h\left(-(4p-x^2)/y^2\right)\nonumber\\
    &=2\sqrt{p}\sum_{A_y(p)\leq x\leq B_y(p)}\ONE_y(x,p)\mu^2\left((4p-x^2)/y^2\right)\frac{\sqrt{(4p-x^2)/y^2}}{\pi}L(1,\chi_{(x^2-4p)/y^2})\nonumber\\
    &=\frac{2\sqrt{pX}}{\pi}\sum_{A_y(p)\leq x\leq B_y(p)}\ONE_y(x,p)\mu^2\left((4p-x^2)/y^2\right)L(1,\chi_{(x^2-4p)/y^2})+\Err_{\ref{eqn:error_1}}\label{eqn:error_1}\\
    &=\begin{multlined}[t]
        \frac{2\sqrt{pX}}{\pi}\sum_{A_y(p)\leq x\leq B_y(p)}\ONE_y(x,p)\left[\sum_{a^2\mid(4p-x^2)/y^2}\mu(a)\right]\left[\sum_{n\leq T}\frac{1}{n}\Legendre{(x^2-4p)/y^2}{n}\right]+\Err_{\ref{eqn:error_2}}
    \end{multlined}\label{eqn:error_2}\\
    &=\frac{2\sqrt{pX}}{\pi}\sum_{n\leq T}\frac{1}{n}\sum_{a\leq\sqrt{X+Y}}\mu(a)\sum_{\substack{A_y(p)\leq x\leq B_y(p)\\a^2\mid(4p-x^2)/y^2}}\ONE_y(x,p)\Legendre{(x^2-4p)/y^2}{n}+\Err_{\ref{eqn:error_3}}\label{eqn:error_3}\\
    &=\frac{2\sqrt{pX}}{\pi}\Delta_y(p)\sum_{n=1}^\infty\sum_{a=1}^\infty\frac{\mu(a)}{8y^2n^2a^2}C_{8y^2na^2,p,a}^{(y)}+\Err_{\ref{eqn:error_4}}\label{eqn:error_4}\\
    &=\frac{y^2c_y(p)}{\pi\sqrt{4\xi-y^2}}Y\sqrt{p}+\Err_{\ref{eqn:error_5}}.\label{eqn:error_5}
\end{align} Here all errors are the same as in the previous section but with some additional dependence on $y$, which will be examined in detail in Section \ref{section:updated_error_bound}.
\endgroup

Then it suffices to show \begin{equation}\label{eqn:c_y(p)_evaluation}
    c_y(p)=\frac{\vartheta(y)}{y^2}\delta_y(p)c(p)
\end{equation} for each $y$. This is similar to the computation of $c(p)$ in Section \ref{section:eval_of_C} but requires more care. We dedicate Sections \ref{section:local_factor_of_C^y} and \ref{section:eval_of_cy(p)} to this computation.

\subsection{New Error Bounds}\label{section:updated_error_bound}
The error in \ref{eqn:error_1} comes from the approximation $\sqrt{(4p-x^2)/y^2}=\sqrt{X}+O\left(X^{-1/2}Y\right)$; since $(4p-x^2)/y^2\in[X,X+Y]$, this error term is identical to the $y=1$ case. During the propagation, this is multiplied by the interval length $\Delta_y(p)\asymp y^2p^{-1/2}Y$ as in \ref{eqn:Delta_y(p)_bound} (cf. $\Delta(p)\asymp p^{-1/2}Y$ as in \ref{eqn:Delta(p)_bound}), so we pick up a factor of $y^2$ compared with the error in \ref{eqn:y=1_after_smooth_function_approx}: $$\Err_{\ref{eqn:error_1}}\ll y^2X^{-1/2+\epsilon}Y^2.$$

The additional error in \ref{eqn:error_2} (and in \ref{eqn:error_3}, since the latter is obtained by only switching order of summation in the main term) comes from the truncation of the $L$-function and Lemma \ref{lem:density_theorem}. The bound in Lemma \ref{lem:density_theorem} has two parts, the first depending on the maximum discriminant in $\msD$, and the second depending on $\abs{\msD}$; thus the first is unchanged and the second is multiplied by a factor of $y^2$. For simplicity, we use $$\Err_{\ref{eqn:error_3}}=\Err_{\ref{eqn:error_2}}\ll y^2\Err_2,$$ where $\Err_2$ was given in \ref{eqn:Err_2}. 

The additional error in \ref{eqn:error_4} is a bit more involved. As in the $y=1$ case, we split the error into three regions. All errors are propagated through $\Delta_y(p)$ which gives a factor of $y^2$.
\begin{itemize}
    \item Small $a$, small $n$: the analog of Proposition \ref{prop:sum_of_Legendre} holds with the $\log n$ in the error bound replaced by $\log(y^2na^2)$, which amounts to an additional $y^\epsilon$ factor compared with the error in \ref{eqn:small_a_small_n_goal}: $$\Err\ll y^{2+\epsilon}T^{1/2+\epsilon}V^{1+\epsilon}\sqrt{pX}.$$
    \item Small $a$, large $n$: exact evaluation in Section \ref{section:local_factor_of_C^y} will show that $$C_{8y^2na^2,p,a}^{(y)}\ll\frac{a^\epsilon n}{k(n)},$$ in analogy to \ref{eqn:bound_on_C}. Thus there is no additional factor compared with the error in \ref{eqn:small_a_large_n_goal}: $$\Err\ll y^2T^{-1/2}X^{1/2}Y.$$
    \item Large $a$: for the first error, there is no additional factor compared with that in \ref{eqn:large_a_goal1}: $$\Err\ll y^2V^{-1+\epsilon}X^{1/2}Y.$$ There is also no additional factor compared with the error in \ref{eqn:large_a_goal2} since $B_y(p)\asymp p^{1/2}$ as well. Thus $$\Err\ll y^2\left(\frac{p^{7/6}X^{1/2}}{V^{2/3}}+\frac{p^{3/2}X^{1/2}}{V^2}+p^{5/6}X^{1/2}\right)(Tp)^\epsilon.$$
\end{itemize} Putting these together, we get $$\Err_{\ref{eqn:error_4}}\ll y^{2+\epsilon}\Err_3+\Err_{\ref{eqn:error_3}},$$ where $\Err_3$ was given in \ref{eqn:Err_3}. 

Finally, there is an additional error term from applying the asymptotic formula of $\Delta_y(p)$ from \ref{eqn:Delta_y(p)_bound} whose error term is $y^4$ times that of \ref{eqn:Delta(p)_asymp}, and this error is propagated through $c_y(p)\ll y^{-2+\epsilon}$ (which follows from exact evaluation in Sections \ref{section:local_factor_of_C^y} and \ref{section:eval_of_cy(p)} and $\vartheta(y)\ll y^\epsilon$), so this particular term is also multiplied by a factor of $y^{2+\epsilon}$. Thus, the total error is: \begin{align*}
    \Err_{\ref{eqn:error_5}}&\ll_{\delta_\xi}y^{2+\epsilon}\Err_{\ref{eqn:thm1_error}}\ll y^2\left(p^{5/7}X^{(7+\sqrt{37})/21}Y^{1/7}+\frac{Y^2}{X^{1/2}}\right)(ypXY)^\epsilon,
\end{align*} and this is the error bound in Theorem \ref{thm:thm_y=y}.

\subsection{Local Product for $C^{(y)}_{a_y^2\cdot8y^2n,p}$}\label{section:local_factor_of_C^y}
From now on, our goal is to prove \ref{eqn:c_y(p)_evaluation}. We follow the strategy in Section \ref{section:eval_of_C}, and we start with the local product for $C^{(y)}_{a_y^2\cdot 8y^2n,p}$.

Recall $a_y=(y^\infty,a)$ and $$C^{(y)}_{a_y^2\cdot8y^2n,p}=\sum_{0\leq x<a_y^2\cdot 8y^2n}\ONE\left\{a_y^2y^2\mid(4p-x^2),(4p-x^2)/y^2\equiv3\Mod{4}\right\}\Legendre{(x^2-4p)/y^2}{n}.$$ The length of the interval of summation in the definition of $C^{(y)}_{a_y^2\cdot 8y^2n,p}$ guarantees that it splits as a product. Precisely, if $$y=2^{\nu}q_1^{\nu_1}\cdots q_r^{\nu_r},\qquad a_y=2^\mu q_1^{\mu_1}\cdots q_r^{\mu_r},\qquad n=2^ep^{e'}q_1^{f_1}\cdots q_r^{f_r}p_1^{e_1}\cdots p_k^{e_k}$$ (since we will require $y<2\sqrt{\xi}=2\sqrt{p/X}$, we may assume that $p\nmid y$), then \begin{equation}\label{eqn:C^y_ay2_8y2n_p_product}
    C^{(y)}_{a_y^2\cdot 8y^2n,p}=\sigma(2)\sigma(p)\prod_{i=1}^r\sigma_{I}(q_i)\cdot\prod_{j=1}^k\sigma_{\RomanII}(p_j),
\end{equation} with \begin{align}
    \sigma(2)&=\sum_{x\mod 2^{e+2\mu+2\nu+3}}\ONE\left\{2^{2\mu+2\nu}\mid(4p-x^2),(4p-x^2)/y^2\equiv3\Mod{4}\right\}\Legendre{(x^2-4p)/2^{2\nu}}{2}^e,\label{eqn:sigma_2_definition}\\
    \sigma(p)&=\sum_{x\mod p^{e'}}\Legendre{x^2-4p}{p}^{e'},\label{eqn:sigma_p_definition}\\
    \sigma_I(q_i)&=\sum_{x\mod f_i+2\mu_i+2\nu_i}\ONE\left\{q_i^{2\mu_i+2\nu_i}\mid(4p-x^2)\right\}\Legendre{(x^2-4p)/q_i^{2\nu_i}}{q_i}^{f_i},\label{eqn:sigma_I_definition}\\
    \sigma_{\RomanII}(p_j)&=\sum_{x\mod p_j^{e_j}}\Legendre{x^2-4p}{p_j}^{e_j},\label{eqn:sigma_II_definition}
\end{align} so $\sigma_I$ is the local factor at the odd places dividing $y$ and $\sigma_{\RomanII}$ at those not dividing $y$. Note that each $$\sigma_{\bullet}(\star)=\sigma_{\bullet}(\star,v_\star(y),v_\star(a_y),v_\star(n))$$ is a function of not only the place $\star$ but also of the valuations of $y,a_y,n$ there; we simplify the notation here but will use the complete version later to avoid confusion.

As in \ref{eqn:C_8n_p_product}, we have
\begin{align*}
    \sigma(p,0,0,e')&=\begin{cases}
        \FixedSize{1} &\text{if $e'=0$,}\\
        \FixedSize{(p-1)p^{e'-1}} &\text{if $e'>0$},
    \end{cases}\\
    \sigma_{\RomanII}(p_j,0,0,e_j)&=\begin{cases}
        p_j^{e_j-1}\left(p_j-1-\Legendre{p}{p_j}\right) &\text{if $e_j$ even,}\\
        -p_j^{e_j-1} &\text{if $e_j$ odd}.
    \end{cases}
    \intertext{It's also not hard to compute that}
    \sigma(2,\nu,\mu,e)&=\begin{cases}
        \FixedSize{4} &\text{if $\nu=0$ and $e=0$,}\\
        \FixedSize{4(-2)^e} &\text{if $\nu=0$ and $e>0$,}\\
        \FixedSize{2^{e+3}} &\text{if $\nu=1$, $e\geq0$ even, and $p\equiv3\Mod{4}$,}\\
        \FixedSize{2^{e+4}} &\text{if $\nu=2$, $e\geq0$ even, and $p\equiv5\Mod{8}$,}\\
        \FixedSize{2^{e+4}} &\text{if $\nu\geq3$, $e\geq0$ even, and $p\equiv1\Mod{8}$,}\\
        0 &\otherwise.
    \end{cases}\\
    \sigma_{I}(q_i,\nu_i,\mu_i,f_i)&=\begin{cases}
        \FixedSize{2} &\text{if $\Legendre{p}{q_i}=1,f_i=0$,}\\
        \FixedSize{2(q_i-1)q_i^{f_i-1}} &\text{if $\Legendre{p}{q_i}=1,f_i>0$ even, and $\mu_i=0$,}\\
        \FixedSize{0} &\otherwise.
    \end{cases}
\end{align*} We note that $C^{(y)}_{a_y^2\cdot 8y^2n,p}=0$ if $\delta_y(p)=0$, i.e. $C^{(y)}_{a_y^2\cdot 8y^2n,p}=0$ unless $p$ is a square modulo every odd prime factor of $y$, and moreover $p$ is in a certain residue class modulo either 4 or 8 if $y$ is even.

\subsection{Evaluation of $c_y(p)$}\label{section:eval_of_cy(p)}
In this section we complete the proof of \ref{eqn:c_y(p)_evaluation}, i.e. $$c_y(p)=\frac{\vartheta(y)}{y^2}\delta_y(p)c(p).$$

First assume $y$ is not a power of 2. Then with the notation from the previous section, there is some odd prime $q_1\mid y$ with $\nu_1=v_{q_1}(y)$ and $f_1=v_{q_1}(n)$; let $\ty=y/q_1^{\nu_1}$ be the non-$q_1$ part of $y$. We compare $c_y(p)$ to $c_{\ty}(p)$. If $\Legendre{p}{q_1}\neq1$ then $\delta_y(p)=c_y(p)=0$. Otherwise, $\delta_y(p)=\delta_{\ty}(p)$; assuming both are nonzero, from \ref{eqn:C^y_ay2_8y2n_p_product} we have \begin{equation}\label{eqn:relation_between_C^y_and_C^ytilde}
    \frac{C^{(y)}_{8y^2n,p}}{\sigma_{I}(q_1,\nu_1,0,f_1)}=\frac{C^{(\ty)}_{8\ty^2n,p}}{\sigma_{\RomanII}(q_1,0,0,f_1)}\AND\frac{C^{(y)}_{a_y^2\cdot8y^2n,p}}{\sigma_I(q_1,\nu_1,0,f_1)}=\frac{C^{(\ty)}_{a_{\ty}^2\cdot8\ty^2n,p}}{\sigma_{\RomanII}(q_1,0,0,f_1)},
\end{equation} corresponding to the cases where $(y,a)=1$ and $(y,a)\neq1$, respectively. 

Now $$c_y(p)=\sum_{n=1}^\infty\sum_{a=1}^\infty\frac{\mu(a)}{8y^2n^2a^2}C_{8y^2na^2,p,a}^{(y)}$$ is defined as a sum over $n$ and $a$, and the summand vanishes unless $(n,a)=1$, $a$ odd and squarefree. We split this sum into $q_1\nmid a$ and $q_1\Vert a$ parts, and then use \ref{eqn:C^y_8y^2na^2,p,a_definition_2}, \ref{eqn:relation_between_C^y_and_C^ytilde}:
\begin{align*}
    c_y(p)&=\sum_{n=1}^\infty\frac{C^{(y)}_{8y^2n,p}}{8y^2n^2}\sum_{a,(2q_1n,a)=1}\frac{\mu(a)R_{a,p}}{a^2}+\sum_{n,q_1\nmid n}\frac{C_{a_y^2\cdot8y^2n,p}^{(y)}}{8y^2n^2}\sum_{a,q_1\Vert a,(2n,a)=1}\frac{\mu(a)R_{a/q_1,p}}{a^2}\\
    &=\begin{multlined}[t]
        \sum_{n=1}^\infty\frac{C^{(\ty)}_{8\ty^2n,p}}{8\ty^2n^2}\frac{\sigma_I(q_1,\nu_1,0,f_1)}{q_1^{2\nu_1}\sigma_{\RomanII}(q_1,0,0,f_1)}\sum_{a,(2q_1n,a)=1}\frac{\mu(a)R_{a,p}}{a^2}\\+\sum_{n,q_1\nmid n}\frac{C_{a_{\ty}^2\cdot8\ty^2n,p}^{(\ty)}}{8\ty^2n^2}\frac{\sigma_I(q_1,\nu_1,0,0)}{q_1^{2\nu_1}\sigma_{\RomanII}(q_1,0,0,0)}\left(-\frac{1}{q_1^2}\right)\sum_{\ta,(2q_1n,\ta)=1}\frac{\mu(\ta)R_{\ta,p}}{\ta^2},
    \end{multlined}\\
    \intertext{and similarly}
    c_{\ty}(p)&=\sum_{n=1}^\infty\frac{C^{\ty}_{8\ty^2n,p}}{8\ty^2n^2}\sum_{\substack{a,(2q_1n,a)=1}}\frac{\mu(a)R_{a,p}}{a^2}+\sum_{n,q_1\nmid n}\frac{C^{(y)}_{a_{\ty}^2\cdot 8\ty^2n,p}}{8\ty^2n^2}\left(-\frac{1}{q_1^2}\right)\sum_{\ta,(2q_1n,\ta)=1}\frac{\mu(\ta)R_{\ta,p}}{\ta^2}.
\end{align*} The sum over $n$ also splits multiplicatively, and we see that the local factors for $c_y(p)$ and $c_{\ty}(p)$ are the same except for at $q_1$. Using \ref{eqn:sigma_I_definition} and \ref{eqn:sigma_II_definition}, we can compute that the factor for $c_y(p)$ at $q_1$ is $$\frac{1}{q_1^{2\nu_1}}\left(\sum_{f_1=0}^\infty\frac{\sigma_I(q_1,\nu_1,0,f_1)}{q_1^{2f_1}}-\frac{2}{q_1^2}\right)=\frac{1}{q_1^{2\nu_1}}\frac{2(q_1^3+q_1^2-1)}{q_1^2(q_1+1)},$$ and that for $c_{\ty}(p)$ is $$1+\sum_{f_1=1}^\infty\frac{\sigma_{\RomanII}(q_1,0,0,f_1)}{q_1^{2f_1}}-\frac{2}{q_1^2}=\frac{q_1^4-3q_1^2-2q_1+2}{q_1^2(q_1^2-1)}.$$ Thus \begin{align*}
    c_y(p)&=c_{\ty}(p)\cdot\frac{2(q_1-1)(q_1^3+q_1^2-1)}{q_1^{2\nu_1}(q_1^4-3q_1^2-2q_1+2)}\\
    &=c_{\ty}(p)\cdot\frac{2}{q_1^{2\nu_1}}\left(1+\frac{2q_1^2+q_1-1}{q_1^4-3q_1^2-2q_1+2}\right)\\
    &=c_{\ty}(p)\cdot\frac{{\vartheta(q_1^{\nu_1})}}{q_1^{2\nu_1}}.
\end{align*} 


\noindent Since $q_1^{\nu_1}$ is the $q_1$-part of $y$ and $\vartheta(y)/y^2$ is multiplicative, we immediately have $c_y(p)=\vartheta(\ty)c_{2^\nu}(p)$ where $\ty$ is the odd part of $y$ and $2^\nu$ is the 2-part of $y$. 

Now it suffices to show $c_y(p)=\frac{\vartheta(y)}{y^2}\delta_y(p)c(p)$ in the case where $y=2^\nu$ is a power of 2. Again $\delta_y(p)=0$ implies $c_y(p)=0$; otherwise, the local factor for $c_y(p)$ at 2 is:
\begin{itemize}
    \item $\nu=0$: we recall from Section \ref{section:eval_of_C} that the the local factor for $c(p)$ at 2 is $1/3$.
    \item $\nu=1$: $$\frac{1}{8\cdot 2^{2\nu}}\sum_{e=0}^\infty\frac{\sigma(2,\nu,\mu,e)}{2^{2e}}=\frac{1}{8\cdot 4}\sum_{e\geq0\text{ even}}\frac{2^{e+3}}{2^{2e}}=\frac{1}{3},$$
    \item $\nu\geq2$: $$\frac{1}{8\cdot 2^{2\nu}}\sum_{e=0}^\infty\frac{\sigma(2,\nu,\mu,e)}{2^{2e}}=\frac{1}{8\cdot 2^{2\nu}}\sum_{e\geq0\text{ even}}\frac{2^{e+4}}{2^{2e}}=\frac{1}{3\cdot 2^{2\nu-3}}.$$
\end{itemize} It follows that (assuming $y=2^\nu$, $\delta_y(p)\neq0$) \begin{align*}
    c_y(p)&=\begin{cases}
        c(p) &\text{if $\nu=0,1$,}\\
        2^{-2\nu+3}c(p) &\text{if $\nu\geq2$},
    \end{cases}\\
    &=\frac{2^{\omega(y)}2^{\min(\nu,2)}}{y^2}c(p)\\
    &=\frac{\vartheta(y)}{y^2}c(p),
\end{align*} as desired. This complete the proof of \ref{eqn:c_y(p)_evaluation} and thus of Theorem \ref{thm:thm_y=y}.

\section{Averages over $p$ and the Murmuration Density}
In this section we conduct a second average over $p$ to obtain Theorem \ref{thm:thm2} and the murmuration density as defined in \cite{SarnakLetter}, which requires the murmuration density to be a smooth function of $\xi=p/N$. This also helps us establish the relation to the 1-level density conjecture in the next section. We introduce the following notation: if $f(p)$ is a function on the primes and $[A,B]$ is an interval, we let \begin{equation}
    \Exp_{p\in[A,B]}f(p):=\frac{\sum_{p\in[A,B]}f(p)}{\sum_{p\in[A,B]}1}
\end{equation} denote the average of $f(p)$ over primes in $[A,B]$.

Recall from section \ref{chapter:introduction} that we say that \textit{primes are equidistributed in intervals of length $H=o(X)$} if $$\sum_{\substack{x\in[X,X+H]\\x\equiv a\Mod{q}}}\Lambda(x)=\frac{H}{\varphi(q)}(1+o(1))$$ for all $a,q$ with $(a,q)=1$, where $$\Lambda(n)=\begin{cases}
    \log p &\text{if $n=p^k$ is a prime power,}\\
    0 &\otherwise,
\end{cases}$$ is the von Mandolgt function. This is equivalent to $$\#\{p\in[X,X+H]:p\equiv a\Mod{q}\}=\frac{\li(X+H)-\li(X)}{\varphi(q)}(1+o(1)),$$ where $$\li(X)=\int_0^X\frac{dt}{\log t}$$ is the logarithmic integral. For example, under standard assumptions such as the Generalized Riemann Hypothesis for Dirichlet $L$-functions, we may take $H=X^{1/2+\epsilon}$ (see \cite{Davenport}, p.125); however we will not require the power of GRH beyond the prime number theorem in arithmetic progressions, and we expect the prime number theorem to hold for much smaller $H$.

\subsection{Averages of $c(p)$ over $p$}
We first average $$c(p)=\frac{p+1}{3p}\prod_{\ell>2,\Legendre{p}{\ell}=1}\left(1-2\ell^{-2}-\frac{2\ell^{-3}}{1-\ell^{-2}}\right)$$ over intervals.
\begin{prop}\label{prop:average_of_c(p)}
Assume primes are equidistributed in intervals of length $H=o(X)$. We have $$\Exp_{p\in[X,X+H]}c(p)=\cbarr+o(1),$$ where \begin{equation}
    \cbarr=\frac{1}{3}\prod_{\ell>2}\left(1-\ell^{-2}-\frac{\ell^{-3}}{1-\ell^{-2}}\right)
\end{equation} as given in \ref{eqn:cbarr_definition}.
\end{prop}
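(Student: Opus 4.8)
The plan is to truncate the Euler-type product defining $c(p)$ at a large fixed parameter $L$, average the truncation using the equidistribution hypothesis, and then let $L\to\infty$; the key technical point is that the tail of the product is small \emph{uniformly in $p$}, which is what allows one to interchange the two limits.

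Write $g(\ell):=1-2\ell^{-2}-\frac{2\ell^{-3}}{1-\ell^{-2}}$, so that $c(p)=\frac{p+1}{3p}\prod_{\ell>2,\,\Legendre{p}{\ell}=1}g(\ell)$ and $g(\ell)=1+O(\ell^{-2})$ uniformly. Set $q_L:=\prod_{2<\ell\leq L}\ell$ and split $c(p)=c_L(p)\,r_L(p)$, where
\[
c_L(p):=\frac{p+1}{3p}\prod_{\substack{2<\ell\leq L\\\Legendre{p}{\ell}=1}}g(\ell),\qquad r_L(p):=\prod_{\substack{\ell>L\\\Legendre{p}{\ell}=1}}g(\ell).
\]
Since each factor of $r_L(p)$ is $1+O(\ell^{-2})$ uniformly in $p$, we get $r_L(p)=1+O(L^{-1})$ and $c(p)=O(1)$, both uniformly in $p$; hence $|c(p)-c_L(p)|\ll L^{-1}$ uniformly in $p$.

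Next I would compute $\Exp_{p\in[X,X+H]}c_L(p)$ for fixed $L$. Once $X>L$ every prime in $[X,X+H]$ is coprime to $q_L$, and there $\frac{p+1}{3p}=\frac13+O(X^{-1})$, while the remaining factor of $c_L(p)$ depends only on $p\bmod q_L$. Grouping primes by residue classes modulo the fixed modulus $q_L$ and applying the equidistribution hypothesis,
\[
\Exp_{p\in[X,X+H]}c_L(p)=\frac{1}{3\varphi(q_L)}\sum_{\substack{a\bmod q_L\\(a,q_L)=1}}\ \prod_{\substack{2<\ell\leq L\\\Legendre{a}{\ell}=1}}g(\ell)\ +\ o(1)
\]
as $X\to\infty$ with $L$ fixed. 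By the Chinese Remainder Theorem the sum over $a$ factors over the primes $\ell\leq L$, and for each such $\ell$ exactly $(\ell-1)/2$ of the $\ell-1$ nonzero residues are quadratic residues, so the $\ell$-local average equals $\tfrac12\bigl(g(\ell)+1\bigr)=1-\ell^{-2}-\frac{\ell^{-3}}{1-\ell^{-2}}$. Therefore
\[
\Exp_{p\in[X,X+H]}c_L(p)\ \longrightarrow\ \frac13\prod_{2<\ell\leq L}\Bigl(1-\ell^{-2}-\frac{\ell^{-3}}{1-\ell^{-2}}\Bigr),
\]
which is the $L$-truncation of $\cbarr$.

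Finally I would combine the two estimates by a three-parameter argument. Given $\varepsilon>0$, choose $L$ so large that $|c(p)-c_L(p)|<\varepsilon/3$ for all $p$ (tail bound) and $\bigl|\tfrac13\prod_{2<\ell\leq L}(\cdots)-\cbarr\bigr|<\varepsilon/3$ (convergence of the product defining $\cbarr$); then take $X$ large enough that $\bigl|\Exp_{p\in[X,X+H]}c_L(p)-\tfrac13\prod_{2<\ell\leq L}(\cdots)\bigr|<\varepsilon/3$. The triangle inequality gives $\bigl|\Exp_{p\in[X,X+H]}c(p)-\cbarr\bigr|<\varepsilon$, and since $\varepsilon$ was arbitrary this proves $\Exp_{p\in[X,X+H]}c(p)=\cbarr+o(1)$. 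No step is deep; the only real subtlety is the order of the limits — one must fix $L$ \emph{before} letting $X\to\infty$, so that the (possibly $q_L$-dependent) error in the equidistribution hypothesis is harmless — and the uniform-in-$p$ bound $|c(p)-c_L(p)|\ll L^{-1}$ is precisely what legitimizes this.
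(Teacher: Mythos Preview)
Your proof is correct and follows essentially the same approach as the paper: truncate the product defining $c(p)$ at a finite level, average the truncation using the equidistribution hypothesis together with the Chinese Remainder Theorem to obtain the local factor $\tfrac12(g(\ell)+1)=1-\ell^{-2}-\frac{\ell^{-3}}{1-\ell^{-2}}$, and then use the uniform-in-$p$ tail bound to pass to the limit. The only cosmetic difference is that the paper lets the truncation parameter go to infinity with $X$, whereas you run an $\varepsilon/3$ argument with $L$ fixed first; both handle the double limit correctly.
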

\begin{proof}
Define $$c_M(p):=\frac{1}{3}\prod_{2<\ell<M,\Legendre{p}{\ell}=1}\left(1-2\ell^{-2}-\frac{2\ell^{-3}}{1-\ell^{-2}}\right)$$ where $M$ is some cutoff parameter. Let $S=\{\ell:2<\ell<M\}$ denote the set of odd primes less than $M$, and let $q=\prod_{\ell\in S}\ell$ denote their product. 

The prime number theorem in arithmetic progressions implies \begin{align*}
    \sum_{p\in[X,X+H]}c_M(p)&=\sum_{a\Mod{q}}\#\{p\in[X,X+H]:p\equiv a\Mod{q}\}\cdot\frac{p+1}{3p}\prod_{\ell\in S,\Legendre{a}{\ell}=1}\left(1-2\ell^{-2}-\frac{2\ell^{-3}}{1-\ell^{-2}}\right)\\
    &=\sum_{a\Mod{q}}\frac{\li(X+H)-\li(X)}{\varphi(q)}\cdot\frac{1}{3}\prod_{\ell\in S,\Legendre{a}{\ell}=1}\left(1-2\ell^{-2}-\frac{2\ell^{-3}}{1-\ell^{-2}}\right)(1+o(1)).
\end{align*} Now by the Chinese remainder theorem, given any sequence $\epsilon=(\epsilon_1,\cdots,\epsilon_k)\in\{\pm1\}^k$, there are exactly $\prod_{\ell\in S}\frac{\ell-1}{2}=2^{-k}\varphi(q)$ primitive residue classes $a$ modulo $q$ such that $\Legendre{a}{\ell_i}=\epsilon_i$ for all $i$. Thus \begin{align}
    \sum_{p\in [X,X+H]}c_M(p)&=\frac{\li(X+H)-\li(X)}{\varphi(q)}\cdot2^{-k}\varphi(q)\sum_{(\epsilon_1,\cdots,\epsilon_k)\in\{\pm1\}^k}\frac{1}{3}\prod_{i\leq k,\epsilon_i=1}\left(1-2\ell_i^{-2}-\frac{2\ell_i^{-3}}{1-\ell_i^{-2}}\right)(1+o(1))\nonumber\\
    &=\frac{\li(X+H)-\li(X)}{3}\prod_{\ell\in S}\left(1-\ell^{-2}-\frac{\ell^{-3}}{1-\ell^{-2}}\right)(1+o(1)).\label{eqn:avg_c(p)_num}
\end{align}  For the denominator, the prime number theorem gives $$\sum_{p\in[X,X+H]}1\ll(\li(X+H)-\li(X))(1+o(1)).$$ Thus $$\Exp_{p\in[X,X+H]}c_M(p)=\frac{1}{3}\prod_{\ell\in S}\left(1-\ell^{-2}-\frac{\ell^{-3}}{1-\ell^{-2}}\right)+o(1).$$

Now let $M$ tend to $\infty$ with $X$ (in arbitrary fashion, in fact), so $$\abs{c(p)-c_M(p)}\ll\sum_{\ell\geq M}\frac{1}{\ell^2}=o(1),$$ and similarly $$\abs{\frac{1}{3}\prod_{\ell\in S}\left(1-\ell^{-2}-\frac{\ell^{-3}}{1-\ell^{-2}}\right)-\cbarr}=o(1).$$ Combining the error bounds establishes the Proposition.
\end{proof}

\subsection{Averages of $G(p,X,Y)$ over $p$}
Recall from Theorem \ref{thm:thm1} that \begin{equation*}
    G(p,X,Y)=c(p)\sum_{1\leq y<2\sqrt{\xi}}\delta_y(p)M_y(\xi)+M_-(\xi)+O_{\delta_\xi}\left(X^{-\delta_0+\epsilon}\right).
\end{equation*} The goal of this section is to average over $p\in[P,P+H]$ assuming primes are equidistributed in intervals of length $H=o(X)$, thereby obtaining Theorem \ref{thm:thm2}.

By the assumption that $\Xi=P/X$ is bounded away from squares of half-integers and $H=o(X)$, Theorem \ref{thm:thm1} applies to all primes $p\in[P,P+H]$ when the parameters are large enough. The average of $M_-(\Xi)$ is simply $M_-(\Xi)$ as it is a smooth function and has bounded derivative. Thus it suffices to show that \begin{equation}\label{eqn:averaging_1}
    \Exp_{p\in[P,P+H]}c(p)\delta_y(p)M_y(\xi)=\cbarr\Mbarr_y(\Xi)+o_{\delta_\Xi}(1)
\end{equation} for each $1\leq y<2\sqrt{\Xi}$. 

The function $$\psi(\xi)=\frac{11\zeta(2)}{4A}\sqrt{\frac{\xi}{4\xi-y^2}}$$ is smooth for $\xi\geq\Xi$, and a derivative bound gives $$\psi(\Xi)-\psi(\xi)\ll_{\delta_\Xi}\frac{H}{X}=o(1)$$ for $p\in[P,P+H]$. Consequently, \begin{align}
    \Exp_{p\in[P,P+H]}c(p)\delta_y(p)M_y(\xi)&=\Exp_{p\in[P,P+H]}c(p)\delta_y(p)\vartheta(y)\psi(\xi)\nonumber\\
    &=\vartheta(y)\psi(\Xi)\Exp_{p\in[P,P+H]}c(p)\delta_y(p)+o_{\delta_\Xi}(1).\label{eqn:averaging_2}
\end{align} 

The average of $c(p)\delta_y(p)$ can be handled as that for $c(p)$. The difference is that we include all primes dividing $y$ to the modulus considered (see proof of Proposition \ref{prop:average_of_c(p)}), and the main term is multiplied by a factor of $$\eta(y):=2^{-\omega(y)+\star}\prod_{q\mid y\text{ odd}}\frac{1-2q^{-2}-\frac{2q^{-3}}{1-q^{-2}}}{1-q^{-2}-\frac{q^{-3}}{1-q^{-2}}},\qquad\star=\begin{cases}0 &\text{if }4\nmid y,\\-1 &\text{if }4\mid y.\end{cases}$$ In fact, for every odd prime $q\mid y$, the factor $1-q^{-2}-\frac{q^{-3}}{1-q^{-2}}$ is replaced by $\frac{1}{2}\left(1-2q^{-2}-\frac{2q^{-3}}{1-q^{-2}}\right)$ since we are only summing over the $p$'s that are squares modulo $q$; at the place 2, we get a factor of $1/2$ if $2\Vert y$ since we are only summing over $p\equiv3\Mod{4}$, and a factor of $1/4$ if $4\mid y$ since we are only summing over $p$ in one primitive residue class modulo 8 (either 1 or 5). Thus \begin{equation}\label{eqn:averaging_3}
    \Exp_{p\in[P,P+H]}c(p)\delta_y(p)=\eta(y)\cbarr+o\left(2^{-\omega(y)}\right).
\end{equation} Finally we put \ref{eqn:averaging_3} into \ref{eqn:averaging_2} to obtain \ref{eqn:averaging_1}; the main term comes from noting $\vartheta(y)\eta(y)=\kappa(y)$, and the error comes from noting $\vartheta(y)\ll 2^{\omega(y)}$ and $\psi(\Xi)\ll_{\delta_\Xi}1$. 

This completes the proof of Theorem \ref{thm:thm2}, and we have thus obtained the murmuration density $$M(\Xi)=\cbarr\sum_{1\leq y<2\sqrt{\Xi}}\Mbarr_y(\Xi)+M_-(\Xi)$$ of the family $\msF$, with $\cbarr,\Mbarr_y(\Xi)$ and $M_-(\Xi)$ given in \ref{eqn:cbarr_definition}, \ref{eqn:Mbarr_y_definition} and \ref{eqn:M_-_definition}, respectively. 
\section{Asymptotics of the Murmuration Function}\label{section:asymptotics}
Throughout this section, $\Phi$ denotes a compactly supported smooth weight function. Recall that the corresponding murmuration function may be obtained from the murmuration density via $$M_\Phi(\Xi)=\frac{\int_0^\infty M(\Xi/u)\Phi(u)u^{\delta}\frac{du}{u}}{\int_0^\infty\Phi(u)u^\delta\frac{du}{u}}$$ as in \ref{eqn:murmuration_function_and_density_relation}. We show that for all such weight function $\Phi$ we obtain $M_\Phi(\Xi)\to-\frac{1}{2}$ as $\Xi\to\infty$, which is consistent with \ref{eqn:1_level_density} and the 1-level density conjecture.

Recall the murmuration density \begin{equation}\label{eqn:murmuration_function_recall}
    M(\Xi)=\cbarr\sum_{1\leq y\leq2\sqrt{\Xi}}\overline{M}_y(\Xi)+M_-(\Xi)=\frac{11\zeta(2)\cbarr}{4A}\sum_{1\leq y\leq2\sqrt{\Xi}}\kappa(y)\sqrt{\frac{\Xi}{4\Xi-y^2}}-\frac{11\pi}{12A}\sqrt{\Xi},
\end{equation} where \begin{align*}
    \cbarr&=\frac{1}{3}\prod_{\ell>2}\left(1-\ell^{-2}-\frac{\ell^{-3}}{1-\ell^{-2}}\right),\\
    A&=\prod_p\left(1+\frac{p}{(p+1)^2(p-1)}\right),\\
    \kappa(y)&=2^{\ONE\{2\mid y\}}\prod_{q\mid y\text{ odd}}\left(1+\frac{q^2}{q^4-2q^2-q+1}\right).
\end{align*} 
First we use Poisson summation to rewrite the murmuration density more cleanly.
\begin{prop}
We have \begin{align*}
    M(\Xi)&=\frac{11\pi\zeta(2)\cbarr}{16A}\sqrt{\Xi}\sum_{d=1}^\infty \frac{Q(d)}{d}\sum_{m=1}^\infty J_0\left(\frac{4\pi m\sqrt{\Xi}}{d}\right)-\frac{1}{2},
\end{align*} where $$Q(d)=\mu^2(d)\prod_{q\mid d\text{ odd}}\frac{q^2}{q^4-2q^2-q+1}$$ and $J_0$ is the Bessel function.
\end{prop}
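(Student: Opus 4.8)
The plan is to collapse the finite sum over $y$ into an arithmetic factor (a Dirichlet convolution) times an archimedean factor, and then to apply Poisson summation to the archimedean factor: the point is that, after rescaling, $\sqrt{\Xi/(4\Xi-y^2)}$ is the arcsine density on an interval, and the Fourier transform of the arcsine density is the Bessel function $J_0$. \emph{Step 1 (write $\kappa$ as a convolution).} The function $\kappa$ is multiplicative with $\kappa(1)=1$, $\kappa(2^a)=2$ for $a\ge1$, and $\kappa(q^a)=1+q^2/(q^4-2q^2-q+1)$ for odd primes $q$ and $a\ge1$; since the value at each higher prime power agrees with the value at the prime, $\kappa=Q*\mathbf{1}$, i.e.\ $\kappa(y)=\sum_{d\mid y}Q(d)$, where $Q$ is supported on squarefree integers with $Q(2)=1$ and $Q(q)=q^2/(q^4-2q^2-q+1)$ for odd $q$ — precisely the $Q$ of the statement. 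Inserting this and setting $y=dk$ gives
\[
\sum_{1\le y\le 2\sqrt\Xi}\kappa(y)\sqrt{\frac{\Xi}{4\Xi-y^2}}=\sum_{d=1}^\infty Q(d)\sum_{1\le k\le 2\sqrt\Xi/d}\sqrt{\frac{\Xi}{4\Xi-d^2k^2}}.
\]

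\emph{Step 2 (Poisson on the inner sum).} Fix $d$ and put $f_d(t)=\sqrt{\Xi/(4\Xi-d^2t^2)}$ for $|t|<2\sqrt\Xi/d$ and $f_d(t)=0$ otherwise; it is even with $f_d(0)=\tfrac12$, so the inner sum equals $\tfrac12\big(\sum_{k\in\ZZ}f_d(k)-\tfrac12\big)$. The only singularities of $f_d$ are the integrable square-root blowups at $t=\pm2\sqrt\Xi/d$, and by the hypothesis that $\Xi$ is bounded away from squares of half-integers these endpoints are never integers (if $2\sqrt\Xi/d\in\ZZ$ then $\Xi=(dk/2)^2$ for an integer $k$, contradicting the hypothesis), while $4\Xi-d^2k^2\gg\delta_\Xi$ throughout the range of summation. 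Hence the periodization $\sum_n f_d(\cdot+n)$ is $C^\infty$ near $0$, so the localization principle (Dini's test on the periodization) applies and $\sum_{k\in\ZZ}f_d(k)=\sum_{m\in\ZZ}\widehat{f_d}(m)$ with the right side read as a symmetric limit. Substituting $t=\tfrac{2\sqrt\Xi}{d}\sin\theta$, $\theta\in[-\tfrac\pi2,\tfrac\pi2]$, turns $\widehat{f_d}(m)=\int f_d(t)e^{-2\pi imt}\,dt$ into $\tfrac{\sqrt\Xi}{d}\int_{-\pi/2}^{\pi/2}e^{-i(4\pi m\sqrt\Xi/d)\sin\theta}\,d\theta$; discarding the odd part and using Bessel's integral $J_0(z)=\tfrac1\pi\int_0^\pi\cos(z\sin\theta)\,d\theta$ yields $\widehat{f_d}(m)=\tfrac{\pi\sqrt\Xi}{d}J_0\!\big(\tfrac{4\pi m\sqrt\Xi}{d}\big)$. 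Therefore
\[
\sum_{1\le k\le 2\sqrt\Xi/d}\sqrt{\frac{\Xi}{4\Xi-d^2k^2}}=\frac{\pi\sqrt\Xi}{2d}+\frac{\pi\sqrt\Xi}{d}\sum_{m=1}^\infty J_0\!\Big(\frac{4\pi m\sqrt\Xi}{d}\Big)-\frac14 .
\]

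\emph{Step 3 (sum over $d$, bookkeep constants).} Summing the last display against $Q(d)$ produces three pieces: a multiple of $\sqrt\Xi$, namely $\tfrac{\pi\sqrt\Xi}{2}\sum_d Q(d)/d$; a genuine constant $-\tfrac14\sum_d Q(d)$; and the Bessel double sum $\pi\sqrt\Xi\sum_d\tfrac{Q(d)}{d}\sum_{m\ge1}J_0(4\pi m\sqrt\Xi/d)$. After multiplying through by the prefactor $\tfrac{11\zeta(2)\cbarr}{4A}$ of $M(\Xi)$, the $\sqrt\Xi$ piece is arranged to cancel $M_-(\Xi)=-\tfrac{11\pi}{12A}\sqrt\Xi$: this is the Euler-product identity $\sum_d Q(d)/d=\tfrac{2}{3\zeta(2)\cbarr}$, which I would verify factor by factor, the local factor $1+Q(q)/q=(q^2-1)^2/(q^4-2q^2-q+1)$ telescoping against those of $\cbarr$ and $\zeta(2)$ at each odd $q$ and at $2$. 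The constant piece then collapses to $-\tfrac12$ by the companion identity $\sum_d Q(d)=\tfrac{8A}{11\zeta(2)\cbarr}$, again a term-by-term comparison, this time using that the factor $p^3+p^2-1$ in $A$ equals $(p^4-p^2-p+1)/(p-1)$, i.e.\ $(p-1)^{-1}$ times the numerator of $\kappa(p)=1+Q(p)$ at odd $p$. Since $Q(q)\asymp q^{-2}$, both $\sum_d Q(d)$ and $\sum_d Q(d)/d$ converge absolutely (legitimizing the interchange in Step 1), and by the exact identity of Step 2 the inner Bessel sum is $O(d)$, so the double sum converges absolutely; what survives the two cancellations is exactly the Bessel double sum of the statement.

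The step I expect to be the real obstacle is the analytic justification in Step 2: $f_d$ lies in $L^1$ but is not of bounded variation, and $\widehat{f_d}(m)$ decays only like $|m|^{-1/2}$, so neither the Schwartz-class form of Poisson summation nor absolute convergence on the frequency side is available. One must instead lean on the localization principle for the Fourier series of the (locally smooth) periodization, together with Dirichlet's test to make sense of $\sum_{m\ge1}J_0(4\pi m\sqrt\Xi/d)$ — which converges conditionally precisely because $4\pi\sqrt\Xi/d$ is not an integer multiple of $2\pi$, the same point at which the half-integer-square hypothesis on $\Xi$ is used. Everything downstream — the convolution identity, the $\sin\theta$ substitution producing $J_0$, and the two Euler-product evaluations — is routine once the local factors of $\kappa$ and of $A$ are written out.
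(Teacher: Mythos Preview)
Your proof is correct and follows essentially the same route as the paper: write $\kappa=Q*\mathbf{1}$, apply Poisson summation to the archimedean sum (whose Fourier transform is $\tfrac{\pi\sqrt\Xi}{d}J_0$), and then invoke the two Euler-product identities $\zeta(2)\cbarr\sum_d Q(d)/d=\tfrac23$ and $\tfrac{\zeta(2)\cbarr}{A}\sum_d Q(d)=\tfrac{8}{11}$ to collapse the constant and $\sqrt\Xi$ terms. Your treatment is in fact more careful than the paper's on the analytic justification of Poisson summation (the paper applies it without comment), and your observation that conditional convergence of the Bessel sum hinges on $2\sqrt\Xi/d\notin\ZZ$ is a genuine point the paper leaves implicit.
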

\begin{proof}
    Note $\kappa(y)=\sum_{d\mid y}Q(d)$. Define $$f_c(t):=\sqrt{\frac{1}{1-c^2t^2}\,}\chi_{[-1/c,1/c]},$$ so \begin{equation}\label{eqn:before_poisson_sum}
    \sum_{1\leq y\leq 2\sqrt{\Xi}}\kappa(y)\sqrt{\frac{\Xi}{4\Xi-y^2}}=\frac{1}{2}{\sum_{d=1}^\infty}Q(d)\sum_{1\leq t\leq\frac{2\sqrt{\Xi}}{d}}\sqrt{\frac{1}{1-d^2t^2/4\Xi}}=\frac{1}{2}\sum_{d=1}^\infty  Q(d)\sum_{t=1}^\infty f_{\frac{d}{2\sqrt{\Xi}}}(t).
\end{equation} The Fourier transform of $f_c(t)$ is $\widehat{f_c}(s)=\frac{\pi}{c}J_0\left(\frac{2\pi s}{c}\right),$ and Poisson summation gives $$2\sum_{t=1}^\infty f_{\frac{d}{2\sqrt{\Xi}}}(t)+1=2\sum_{m=1}^\infty\frac{\pi\sqrt{\Xi}}{d}J_0\left(\frac{4\pi m\sqrt{\Xi}}{d}\right)+\frac{\pi\sqrt{\Xi}}{d}.$$ Solving for the sum over $t$ and plugging this into \ref{eqn:before_poisson_sum} and \ref{eqn:murmuration_function_recall}, we get \begin{align*}
    M(\Xi)&=\frac{11\zeta(2)\cbarr}{4A}\left[\frac{\pi\sqrt{\Xi}}{4}\sum_{d=1}^\infty \frac{Q(d)}{d}\sum_{m=1}^\infty J_0\left(\frac{4\pi m\sqrt{\Xi}}{d}\right)-\frac{1}{4}\sum_{d=1}^\infty Q(d)+\frac{\pi\sqrt{\Xi}}{2}\sum_{d=1}^\infty\frac{Q(d)}{d}\right]-\frac{11\pi}{12A}\sqrt{\Xi}.
\end{align*} Now the Euler products for $\zeta(2),\cbarr,A$ and $Q(d)$ give \begin{equation*}
    \frac{\zeta(2)\cbarr}{A}\sum_{d=1}^\infty Q(d)=\frac{8}{11},\qquad\zeta(2)\cbarr\sum_{d=1}^\infty \frac{Q(d)}{d}=\frac{2}{3},
\end{equation*} which yields the Proposition.
\end{proof}


\begin{thm}
For any compactly supported smooth weight function $\Phi:(0,\infty)\to[0,\infty)$, we have $$M_\Phi(\Xi):=\frac{\int_0^\infty M(\Xi/u)\Phi(u)u^{3/2}\frac{du}{u}}{\int_0^\infty\Phi(u)u^{3/2}\frac{du}{u}}\to-\frac{1}{2}\qquad\text{as $\Xi\to\infty$}.$$
\end{thm}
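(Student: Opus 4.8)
The plan is to leverage the Poisson-summed formula for $M$ from the preceding Proposition, which already isolates the limiting value $-\tfrac12$, so that the whole statement reduces to showing the oscillatory remainder is annihilated by the $\Phi$-average. Writing $W:=\int_0^\infty\Phi(u)u^{3/2}\tfrac{du}{u}>0$ for the (fixed, positive) denominator in \ref{eqn:integrating_murmuration_density}, and noting that the constant $-\tfrac12$ contributes exactly $-\tfrac12$ after dividing by $W$, it suffices to prove
\[
N(\Xi):=\int_0^\infty\Bigl(M(\tfrac{\Xi}{u})+\tfrac12\Bigr)\Phi(u)\,u^{1/2}\,du\ \longrightarrow\ 0\qquad(\Xi\to\infty).
\]
Substituting $\Xi\mapsto\Xi/u$ in the Proposition and using $\sqrt{\Xi/u}\cdot u^{1/2}=\sqrt{\Xi}$, I would integrate term by term (the justification of this step is deferred to the last paragraph) to get
\[
N(\Xi)=\frac{11\pi\zeta(2)\cbarr}{16A}\,\sqrt{\Xi}\sum_{d=1}^{\infty}\frac{Q(d)}{d}\sum_{m=1}^{\infty}I(m,d,\Xi),\qquad I(m,d,\Xi):=\int_0^\infty\Phi(u)\,J_0\!\Bigl(\frac{4\pi m\sqrt{\Xi}}{d\sqrt{u}}\Bigr)du.
\]

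The heart of the matter is a bound on the Bessel integral $I(m,d,\Xi)$ for large frequency. The substitution $v=u^{-1/2}$ turns it into $\int_0^\infty\psi(v)\,J_0(\beta v)\,dv$ with $\beta:=\tfrac{4\pi m\sqrt{\Xi}}{d}$ and $\psi(v):=2v^{-3}\Phi(v^{-2})$, which is a fixed function in $C_c^\infty((0,\infty))$ --- crucially with support \emph{away} from $v=0$, because $\Phi$ is. I would then invoke the elementary lemma that for every $\psi\in C_c^\infty((0,\infty))$ and every $s>0$ one has $\int_0^\infty\psi(v)J_0(\beta v)\,dv\ll_{\psi,s}\beta^{-s}$. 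This follows by iterated integration by parts, alternating the identities $\tfrac{d}{dv}\bigl(vJ_1(\beta v)\bigr)=\beta v\,J_0(\beta v)$ and $\tfrac{d}{dv}J_0(\beta v)=-\beta J_1(\beta v)$: no boundary terms arise since $\psi$ and all the auxiliary amplitudes it produces vanish near $v=0$ (this is exactly where support away from the origin is needed, since the rearrangement $J_0(\beta v)=\tfrac{1}{\beta v}\tfrac{d}{dv}\bigl(vJ_1(\beta v)\bigr)$ introduces factors $1/v$), each pass gains a factor $\beta^{-1}$ and differentiates only $\psi$, so the implied constant depends on $\Phi$ and $s$ but not on $m,d,\Xi$. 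Hence $|I(m,d,\Xi)|\ll_{\Phi,s}\bigl(d/(m\sqrt{\Xi})\bigr)^{s}$ for every $s>0$.

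Summing over $m$ and $d$ is then bookkeeping. Taking $s=\tfrac32$,
\[
|N(\Xi)|\ \ll_{\Phi}\ \sqrt{\Xi}\sum_{d=1}^{\infty}\frac{Q(d)}{d}\sum_{m=1}^{\infty}\Bigl(\frac{d}{m\sqrt{\Xi}}\Bigr)^{3/2}=\Xi^{-1/4}\Bigl(\sum_{d=1}^{\infty}Q(d)\,d^{1/2}\Bigr)\Bigl(\sum_{m=1}^{\infty}m^{-3/2}\Bigr),
\]
where $\sum_m m^{-3/2}=\zeta(\tfrac32)$ and $\sum_d Q(d)d^{1/2}$ is an Euler product over squarefree $d$ with factor $1+\tfrac{q^{5/2}}{q^4-2q^2-q+1}=1+O(q^{-3/2})$ at an odd prime $q$, hence convergent. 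The choice of $s$ is forced into the window $1<s<2$ --- $s>1$ for the convergence of $\sum_m m^{-s}$, and $s<2$ for that of $\sum_d Q(d)d^{s-1}$ (generic factor $1+O(q^{s-3})$) --- which is why one should expect only a power saving, $N(\Xi)\ll_{\Phi}\Xi^{-1/4}$, rather than rapid decay. Dividing by $W$ gives $M_\Phi(\Xi)\to-\tfrac12$.

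The step I expect to be the main obstacle is the rigorous justification of the term-by-term integration, since the inner series $\sum_m J_0(\cdot)$ is only conditionally convergent --- of size $\sum_m m^{-1/2}\cos(\beta_0 m)$ --- so Fubini does not apply directly. I would handle it either by partial summation, using that $\sum_{m\le M}J_0(\beta u^{-1/2})$ is bounded uniformly in $M$ with the needed control locally uniform in $u$ off the measure-zero set where $\beta u^{-1/2}/2\pi$ lies near $\ZZ$, or, more robustly, by first passing via $\kappa(y)=\sum_{d\mid y}Q(d)$ to the \emph{absolutely} convergent pre-Poisson expression
\[
M(\tfrac{\Xi}{u})+\tfrac12=\frac{11\zeta(2)\cbarr}{4A}\sum_{d=1}^{\infty}Q(d)\sum_{1\le t\le 2\sqrt{\Xi/u}/d}\sqrt{\frac{\Xi/u}{4\Xi/u-d^2t^2}}-\frac{11\pi}{12A}\sqrt{\tfrac{\Xi}{u}}+\tfrac12,
\]
interchanging the absolutely convergent sum over $d$ with $\int du$, and only afterward applying Poisson summation (through the transform $\widehat{f_c}$, exactly as in the Proposition) to each now-genuinely-finite inner sum over $t$. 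A routine subsidiary point is that all these integrals converge absolutely despite the square-root singularities of $M$ at $\Xi/u=y^2/4$: these sit at the isolated points $u=4\Xi/y^2$ and are integrable against the smooth, compactly supported $\Phi$.
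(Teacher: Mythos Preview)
Your argument is correct and arrives at the same decay rate as the paper, but the route is genuinely different. The paper, following \cite{Zubrilina}, first makes the substitution $M'(\Xi)=M(\Xi^2)$ to remove the square root, then packages the $d$-sum into the Dirichlet series $L(s)=\sum_d Q(d)d^{-s}$ and the Bessel factor into $\tilde f(s)=2^{-s}\Gamma(\tfrac{1-s}{2})/\Gamma(\tfrac{1+s}{2})$ via Mellin inversion, obtaining $F_\Phi(x)$ as a contour integral on $\Re(s)=\tfrac12$; shifting to $\Re(s)=-\tfrac12$ (no poles, since $L(s)$ converges for $\Re(s)>-1$) yields $F_\Phi(x)\ll x^{-1/2}$, and summing over $m$ with the extracted factor $1/m$ gives $\Xi^{-1/2}$ in the squared variable, i.e.\ $\Xi^{-1/4}$ in the original. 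You instead bound the Bessel integral $I(m,d,\Xi)$ directly by iterated integration by parts through the recursions $\tfrac{d}{dv}(vJ_1(\beta v))=\beta v J_0(\beta v)$ and $\tfrac{d}{dv}J_0(\beta v)=-\beta J_1(\beta v)$, which is more elementary and sidesteps the Mellin machinery entirely. Your observation that the exponent must lie in the window $1<s<2$ --- balancing convergence of $\sum_m m^{-s}$ against $\sum_d Q(d)d^{s-1}$ --- is exactly the real-variable shadow of the paper's contour-shifting constraint $-1<\Re(s)<0$, and your choice $s=\tfrac32$ corresponds to the paper's midpoint $\Re(s)=-\tfrac12$. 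One thing your approach buys is that you confront the conditional convergence of $\sum_m J_0(\cdot)$ head-on and give an explicit workaround (returning to the pre-Poisson finite sums before interchanging), whereas the paper's Mellin framework absorbs this issue into the rapid decay of $\tilde\Phi$ but leaves the interchange of $\sum_m$ with the contour integral somewhat implicit.
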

\begin{proof}
First we define $M'(\Xi):=M(\Xi^2)$ and $\Phi'(v):=2v^{3/2}\Phi(v^2).$ A simple change of variables shows that $M'_\Phi(\Xi^2)=M_{\Phi'}'(\Xi)$, so it suffices to prove the statement for the function $M'.$


Setting $x_m=4\pi m\Xi$, we have \begin{align*}
    M'(\Xi/u)+\frac{1}{2}&=\frac{11\pi\zeta(2)\cbarr}{16A}\frac{\Xi}{u}\sum_{d=1}^\infty \frac{Q(d)}{d}\sum_{m=1}^\infty J_0\left(\frac{4\pi m\Xi}{du}\right)\\
    &=\frac{11\zeta(2)\cbarr}{64A}\sum_{d,m}\frac{1}{m}\frac{Q(d)}{d}\frac{x_m}{u}J_0\left(\frac{x_m}{du}\right)\\
    &=\frac{11\zeta(2)\cbarr}{64A}\sum_{m=1}^\infty\frac{1}{m}F\left(\frac{x_m}{u}\right)
\end{align*} where $$F(x):=\sum_{d=1}^\infty\frac{Q(d)}{d}xJ_0\left(\frac{x}{d}\right)=\sum_{d=1}^\infty Q(d)f\left(\frac{d}{x}\right)$$ and $f(x):=x^{-1}J_0(1/x)$ is the same as that on \cite{Zubrilina}, p.23. The Mellin transform of $f$ is given by $$\tilde{f}(s)=\frac{1}{2^s}\frac{\Gamma\left(\frac{1-s}{2}\right)}{\Gamma\left(\frac{1+s}{2}\right)}$$ on the strip $-\frac{1}{2}<\Re(s)<1$. Mellin inversion yields $$F(x)=\sum_{d=1}^\infty Q(d)\frac{1}{2\pi i}\int_{\left(1/2\right)}\ftilde(s)\left(\frac{d}{x}\right)^{-s}\d s=\frac{1}{2\pi i}\int_{(1/2)}L(s)\ftilde(s)x^s\d x$$ where \begin{align*}
    L(s)&:=\sum_{d=1}^\infty Q(d)d^{-s}=\frac{\zeta(s+2)}{\zeta(2s+4)}\frac{1+2^{-s}}{1+\frac{4}{7}\cdot2^{-s}}\prod_{p>2}\left(1+\frac{2p^2+p-1}{(p^4-2p^2-p+1)(p^{s+2}+1)}\right).
\end{align*}\footnote{This differs from that of \cite{Zubrilina} by a factor due to the omission of the factor at 2 in our definition of $Q(d)$.} Thus $$\tPhi\left(\tfrac{3}{2}\right)F_\Phi(x)=\frac{1}{2\pi i}\int_{(1/2)}L(s)\ftilde(s)\tPhi\left(\tfrac{3}{2}-s\right)x^s\d s$$ Since $\Phi$ is smooth and compactly supported, $\tPhi$ is entire and has rapid decay with respect to $t=\Im(s)$. The $L$-function converges uniformly on $\Re(s)>-\frac{1}{2}-\epsilon$ for any small $\epsilon>0$, and we can shift the contour to the line $\Re(s)=-\frac{1}{2}$ picking up no residues, giving $$F_\Phi(x)\ll\int_{(-1/2)}L(s)\ftilde(s)\tPhi\left(\tfrac{3}{2}-s\right)x^s\d s\ll x^{-1/2}.$$ Therefore, $$M_\Phi'(\Xi/u)+\frac{1}{2}=\frac{11\zeta(2)\cbarr}{64A}\sum_{m=1}^\infty\frac{1}{m}F_\Phi(x_m)\ll\sum_{m=1}^\infty\frac{1}{m}(m\Xi)^{-1/2}\ll\Xi^{-1/2}.$$
\end{proof}

\renewcommand{\thesection}{\Alph{section}}
\renewcommand{\theequation}{(\Alph{section}.\arabic{equation})}
\section*{Appendix}

\setcounter{section}{1}
\setcounter{subsection}{0}
\setcounter{equation}{0}

\addcontentsline{toc}{section}{Appendix}

\counterwithin{equation}{section}
\renewcommand{\theequation}{(\Alph{section}.\arabic{equation})}

\renewcommand{\thesubsection}{\Alph{section}.\arabic{subsection}}
\renewcommand{\thesubsubsection}{\Alph{section}.\arabic{subsection}.\arabic{subsubsection}}

\subsection{Lemmas}\label{A:lemmas}
\subsubsection{Truncation of $L(1,\chi_D)$}
\counterwithout{thm}{subsection}
\counterwithin{thm}{section}

Recall that $\chi_{D}=\Legendre{D}{\cdot}$ is the Kronecker symbol. The following estimate is based on Bombieri's density theorem (see \cite{Bombieri}, p.204, Theorem 5).
\setcounter{thm}{0}
\begin{lem}\label{lem:density_theorem}
Let $\msD$ be a finite set of fundamental discriminants. Let $T$ be an arbitrary cutoff parameter and $\frac{1}{2}\leq\sigma\leq1$. Then $$\sum_{D\in\msD}\abs{L(1,\chi_D)-\sum_{n\leq T}\frac{\chi_D(n)}{n}}\ll T^{-1}UX^{\frac{8(1-\sigma)}{3-2\sigma}+\frac{1}{2}+\epsilon}+\abs{\msD}\left(T^{\sigma-1}+\frac{1}{U}\right)(TX)^\epsilon,$$ where $X=\max_{D\in\msD}\abs{D}$ and $U\ll X$ is another cutoff parameter.
\end{lem}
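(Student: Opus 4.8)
The plan is to exhibit $L(1,\chi_D)$ as a residue in a Perron-type contour integral and then estimate the leftover integral \emph{on average} over $D\in\msD$, the averaging being exactly where Bombieri's density theorem is needed. Fixing $T$, $U$, $\sigma$, I would start from the truncated Perron formula (in a form that handles the range $n\asymp T$ carefully, so that it contributes only $O(U^{-1})$ rather than $O(1)$),
\[
\sum_{n\le T}\frac{\chi_D(n)}{n}=\frac{1}{2\pi i}\int_{c-iU}^{c+iU}L(s+1,\chi_D)\,\frac{T^{s}}{s}\,ds+O\!\left(\frac{(TX)^{\epsilon}}{U}\right),\qquad c=\frac{1}{\log T},
\]
and move the contour to the line $\Re s=\sigma-1$ (we may assume $\sigma<1$). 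Since $L(s+1,\chi_D)$ is entire, the only pole crossed is the simple pole of $1/s$ at $s=0$, with residue $L(1,\chi_D)$. Hence, for each $D$,
\[
\Bigl|\,L(1,\chi_D)-\sum_{n\le T}\frac{\chi_D(n)}{n}\,\Bigr|\ \ll_{\sigma}\ T^{\sigma-1}\!\int_{-U}^{U}\frac{\bigl|L(\sigma+it,\chi_D)\bigr|}{1+|t|}\,dt\ +\ \frac1U\!\!\sup_{\sigma-1\le u\le c}\bigl|L(u+1+iU,\chi_D)\bigr|\ +\ \frac{(TX)^{\epsilon}}{U},
\]
the second and third terms coming from the horizontal segments at height $U$ and the Perron tail.

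Summing over $D\in\msD$, I would split the family according to whether $L(\cdot,\chi_D)$ is zero-free to the right of a slightly shifted line: fix a small $\delta>0$ and call $D$ \emph{exceptional} if $L(s,\chi_D)$ has a zero with $\Re s\ge\sigma-\delta$ and $|\Im s|\le U+1$. For a non-exceptional $D$, a Borel--Carath\'eodory estimate in the zero-free half-strip gives $\bigl|L(w,\chi_D)\bigr|\ll(XU)^{\epsilon}$ uniformly for $\Re w\ge\sigma$, $|\Im w|\le U+1$; the crucial point is that the weight $1/(1+|t|)$ (inherited from the $1/s$ in Perron's formula) turns the $U$-long $t$-integral into a mere $\log U$, so the non-exceptional $D$ contribute only $\ll\abs{\msD}(T^{\sigma-1}+U^{-1})(TX)^{\epsilon}$ --- the second term of the lemma (using $U\ll X$ to absorb the $\log$'s and the $(XU)^{\epsilon}$ into $(TX)^{\epsilon}$). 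For the exceptional $D$ I would bound $|L|$ on the relevant segments by convexity, $\ll(X(1+|t|))^{(1-\sigma)/2+\epsilon}$, so that each such $D$ contributes $\ll T^{\sigma-1}(XU)^{(1-\sigma)/2+\epsilon}$, and bound their \emph{number} by Bombieri's zero-density theorem for real characters (\cite{Bombieri}, p.\,204, Theorem~5); with the appropriate conductor and height normalization this yields the first term $T^{-1}UX^{\frac{8(1-\sigma)}{3-2\sigma}+\frac12+\epsilon}$, in which $\frac{8(1-\sigma)}{3-2\sigma}$ is precisely Bombieri's density exponent and the buffer $\delta$ is swallowed by the $\epsilon$-powers.

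The one genuinely delicate step is the last one: invoking Bombieri's density theorem with the conductor/height normalization suited to the quadratic family $\{\chi_D:D\in\msD\}$, passing from a count of zeros to a count of exceptional $D$, and checking that the combination of this count with the convexity bound and the weighted $t$-integrals produces exactly the exponent $\frac{8(1-\sigma)}{3-2\sigma}+\frac12$. By contrast I expect the Perron formula, the contour shift, the residue computation, and the absorption of the secondary errors (the sharp-cutoff tail, the horizontal segments, the buffer $\delta$) to be routine bookkeeping, and the whole argument to run parallel to Bombieri's. As usual, the free parameters $\sigma$, $T$, $U$ are then optimized in each application --- as is done for $G_{\denom}$ in Section~\ref{section:denom_and_num_minus}, where the optimal choice makes $T^{\sigma-1}$ and $U^{-1}$ coincide.
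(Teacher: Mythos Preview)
Your overall architecture matches the paper's: split $\msD$ into exceptional and non-exceptional discriminants according to whether $L(s,\chi_D)$ has a zero in the box $\{\sigma\le\Re s\le1,\ |\Im s|\le U\}$, handle the non-exceptional ones by Perron plus a Lindel\"of-type bound in the zero-free strip, and count the exceptional ones via Bombieri's density theorem. The non-exceptional part is essentially what the paper does.

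The gap is in your treatment of the exceptional $D$. Bounding $|L(\sigma+it,\chi_D)|$ by convexity and integrating against $1/(1+|t|)$ gives, per exceptional $D$, a contribution $\ll T^{\sigma-1}(XU)^{(1-\sigma)/2+\epsilon}$; multiplying by the density count $\ll UX^{8(1-\sigma)/(3-2\sigma)+\epsilon}$ yields
\[
T^{\sigma-1}\,U^{1+(1-\sigma)/2}\,X^{\frac{8(1-\sigma)}{3-2\sigma}+\frac{1-\sigma}{2}+\epsilon},
\]
which is \emph{not} the first term $T^{-1}UX^{\frac{8(1-\sigma)}{3-2\sigma}+\frac12+\epsilon}$ of the lemma: neither the $T$-exponent ($\sigma-1$ versus $-1$) nor the $X$-exponent ($(1-\sigma)/2$ versus $1/2$) matches, and no amount of bookkeeping with $\delta$ or $\epsilon$ repairs this. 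The shape of the stated bound --- a clean $T^{-1}$ and $X^{1/2}$ --- is a tell: these exponents do not come from anything on the shifted contour, but from the elementary P\'olya--Vinogradov inequality applied directly to the Dirichlet-series tail. The paper abandons Perron entirely for the exceptional $D$ and uses partial summation together with $\bigl|\sum_{n\le t}\chi_D(n)\bigr|\ll|D|^{1/2}\log|D|$ to obtain
\[
\Bigl|L(1,\chi_D)-\sum_{n\le T}\frac{\chi_D(n)}{n}\Bigr|\ll T^{-1}X^{1/2+\epsilon}
\]
uniformly, and only then multiplies by the Bombieri count $UX^{8(1-\sigma)/(3-2\sigma)+\epsilon}$. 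Replace your convexity step by this, and the rest of your outline goes through.
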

\begin{proof}
Choose $U\geq2$ and near a half integer. Let $\chi$ be a character modulo $q=\abs{D}$ and $N(\sigma,U;\chi)$ be the number of zeroes of the Dirichlet $L$-function $L(s,\chi)$ in the box $\sigma\leq\Re(s)\leq1,\abs{\Im(s)}\leq U$. Then the abovementioned density theorem implies that \begin{align}
    \sum_{D\in\msD}N(\sigma,U;\chi_D)&\leq\sum_{D\in\msD}\frac{1}{\phi(q)}\sum_{\text{$\chi$ mod $q$}}\abs{\tau(\chi)}^2N(\sigma,U;\chi)\nonumber\\
    &\ll U(X^2+UX)^{4(1-\sigma)/(3-2\sigma)}(UX)^\epsilon\nonumber\\
    &\ll UX^{8(1-\sigma)/(3-2\sigma)+\epsilon},\label{eqn:density_theorem}
\end{align} where $\tau(\chi)=\sum_{a=1}^{q}\chi(a)e(a/q)$ is the Gauss sum and $\abs{\tau(\chi_D)}^2=q$ since $\chi_D$ is primitive (see \cite{Davenport}, pp.66-67). Let $M=UX^{8(1-\sigma)/(3-2\sigma)+\epsilon}$ denote the bound in \ref{eqn:density_theorem}, and this is the upper bound for the number of those $D$ with $N(\sigma,U;\chi_D)>0$. For any such $D$, partial summation and P\'{o}lya-Vinogradov gives (see \cite{Ayoub}, p.321, Theorem 5.2): \begin{equation}\label{eqn:Polya_Vinogradov_bound_for_exceptional_L}
    L(1,\chi_D)-\sum_{n\leq T}\frac{\chi_D(n)}{n}\ll T^{-1}\abs{D}^{1/2+\epsilon}.
\end{equation} 

For all $D$ such that $N(\sigma,U;\chi_D)=0$, Perron's Formula (see \cite{Titchmarsh}, p.301) implies $$\sum_{n\leq T}\frac{\chi_D(n)}{n}=\frac{1}{2\pi i}\int_{\epsilon-iU}^{\epsilon+iU}L(1+s,\chi_D)T^s\frac{ds}{s}+O\left(\frac{T^\epsilon}{U}\right).$$ Shifting the contour of integration to the line $\Re(s)=\sigma-1+\epsilon$ gives \begin{equation}\label{eqn:Lemma_A2_after_shifting_contour}
    \sum_{n\leq T}\frac{\chi_D(n)}{n}=L(1,\chi_D)+\frac{1}{2\pi i}\int_{\sigma-1+\epsilon-iU}^{\sigma-1+\epsilon+iU}L(1+s,\chi_D)T^s\frac{ds}{s}+O\left(\frac{T^\epsilon}{U}\right).
\end{equation} Indeed, since $N(\sigma,U;\chi_D)=0$ the integrand has only a simple pole at $s=0$ with residue $L(1,\chi_D).$ Now we need an estimate on $L(1+s,\chi_D)$ along $\Re(s)=\sigma-1+\epsilon$. Following the proof of the Lindel\"{o}f Hypothesis from GRH in \cite{Iwaniec-Kowalski}, pp.114-116, we obtain (in the following, $\mfq:=\mfq(\chi_D,s)=\abs{D}(\abs{s}+3)$ is the ``analytic conductor'') $$-\frac{L'}{L}(s,\chi_D)\ll\frac{\log\mfq}{\sigma-\sigma}X^{\sigma-\sigma}+X^{1-\sigma}+\log X\ll\frac{(\log\mfq)^{(1-\sigma)/(1-\sigma)}}{\sigma-\sigma}$$ for any $s$ with $\sigma<\sigma\leq\frac{5}{4}$, and we chose $X=(\log\mfq)^{1/(1-\sigma)}$. This further implies $$\log L(s,\chi_D)\ll\frac{(\log\mfq)^{(1-\sigma)/(1-\sigma)}}{(\sigma-\sigma)(\log\log\mfq)}+\log\log\mfq,$$ and thus $$L(s,\chi_D)\ll\mfq^\epsilon\ll(\abs{Ds})^\epsilon$$ for $\sigma\geq\sigma+\epsilon$ (the implied constant depend on $\epsilon$ only). Putting this bound into \ref{eqn:Lemma_A2_after_shifting_contour} gives \begin{equation}\label{eqn:bound_for_L_where_GRH_holds}
    \sum_{n\leq T}\frac{\chi_D(n)}{n}-L(1,\chi_D)\ll(UX)^\epsilon T^{\sigma-1}+\frac{T^\epsilon}{U}.
\end{equation}

Finally, using \ref{eqn:Polya_Vinogradov_bound_for_exceptional_L} for those $D\in\msD$ with $N(\sigma,U;\chi_D)>0$ and \ref{eqn:bound_for_L_where_GRH_holds} for the rest, we obtain \begin{align*}
    \sum_{D\in\msD}\abs{L(1,\chi_D)-\sum_{n\leq T}\frac{\chi_D(n)}{n}}&\ll MT^{-1}X^{1/2+\epsilon}+\abs{\msD}\left((UX)^\epsilon T^{\sigma-1}+\frac{T^\epsilon}{U}\right)\\
    &\ll T^{-1}UX^{\frac{8(1-\sigma)}{3-2\sigma}+\frac{1}{2}+\epsilon}+\abs{\msD}\left(T^{\sigma-1}+\frac{1}{U}\right)(TX)^\epsilon,
\end{align*} as desired.
\end{proof}

\begin{cor}\label{cor:truncation_assuming_GRH}
Let $\msD$ and $X$ be as above. Assuming GRH (specifically the Riemann Hypothesis for Dirichlet $L$-functions), we have $$\sum_{D\in\msD}\abs{L(1,\chi_D)-\sum_{n\leq T}\frac{\chi_D(n)}{n}}\ll\abs{\msD}T^{-1/2}(TX)^\epsilon.$$
\end{cor}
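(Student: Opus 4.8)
The plan is to obtain this by specializing the proof of Lemma~\ref{lem:density_theorem} to the situation where the Riemann Hypothesis holds for every $L(s,\chi_D)$. The key point is that under GRH one has $N(\sigma,U;\chi_D)=0$ for all fundamental discriminants $D$ and all $\sigma>1/2$, so the ``exceptional'' set of $D$ handled via P\'olya--Vinogradov in \ref{eqn:Polya_Vinogradov_bound_for_exceptional_L} becomes empty: \emph{every} $D\in\msD$ now falls under the zero-free case that leads to \ref{eqn:bound_for_L_where_GRH_holds}. Moreover, the bound $L(s,\chi_D)\ll(\abs{Ds})^\epsilon$ used in deriving \ref{eqn:bound_for_L_where_GRH_holds} is just the Lindel\"{o}f bound, an immediate consequence of GRH, and it now holds on the entire half-plane $\Re(s)\geq 1/2+\epsilon$. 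Thus the contour-shift argument of Lemma~\ref{lem:density_theorem} goes through verbatim with $\sigma=1/2+\epsilon$.

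Concretely, I would fix $\sigma=1/2+\epsilon$ and take the auxiliary parameter $U$ of size $\asymp T^{1/2}$, which is permissible whenever $T\ll X^2$ (the only constraint from Lemma~\ref{lem:density_theorem} being $U\ll X$); in all applications in this paper $T$ is a small power of $X$, so this costs nothing. Then \ref{eqn:bound_for_L_where_GRH_holds} applies to every $D\in\msD$ and gives
\[
\abs{L(1,\chi_D)-\sum_{n\leq T}\frac{\chi_D(n)}{n}}\ll(UX)^\epsilon T^{\sigma-1}+\frac{T^\epsilon}{U}\ll T^{-1/2}(TX)^\epsilon,
\]
and summing over the $\abs{\msD}$ discriminants in $\msD$ yields the claimed bound, with no contribution from an exceptional set.

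For the remaining range $T\gg X^2$ (and in fact for all $T$), there is an even cleaner route that bypasses Perron's formula: under GRH one has the uniform estimate $\sum_{n\leq t}\chi_D(n)\ll t^{1/2+\epsilon}X^\epsilon$, so partial summation applied to the tail $L(1,\chi_D)-\sum_{n\leq T}\chi_D(n)/n=\sum_{n>T}\chi_D(n)/n$ gives $\ll T^{-1/2}(TX)^\epsilon$ directly, after which one again sums over $D\in\msD$. Either way there is no substantive obstacle: the content of the corollary is simply the observation that GRH collapses the exceptional set of Lemma~\ref{lem:density_theorem} to $\emptyset$ and lets one push the cutoff exponent $\sigma$ down to $1/2$, after which the quantitative bookkeeping is exactly the one already carried out there.
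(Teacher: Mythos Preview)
Your approach is essentially the paper's: set $\sigma=\tfrac12+\epsilon$, observe that GRH empties the exceptional set so that \ref{eqn:bound_for_L_where_GRH_holds} applies to every $D\in\msD$, and then choose $U$ to balance. The only difference is that the paper notes the constraint $U\ll X$ is no longer needed under GRH (it came solely from the density-theorem step, which is now vacuous) and simply takes $U$ a large power of $T$, so your case split for $T\gg X^2$ and the alternative partial-summation route are unnecessary.
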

\begin{proof}
We take $\sigma=\frac{1}{2}+\epsilon$ and apply the previous Lemma. On GRH the first error term vanishes since it comes from exceptional $L$-functions with $N(\sigma,U;\chi_D)>0$, and taking $U$ a large power of $T$ (we also no longer need $U\ll X$) gives the bound.
\end{proof}

\subsubsection{Arithmetic Functions}
\begin{lem}[Lemma 6.7 of \cite{Zubrilina}, p.33]\label{lem:partial_sum_characters_over_squarefree_numbers}
Let $m\in\NN$, and let $\chi$ be a real character modulo $m$ coming from a primitive character modulo $m_0$. Let $$\sigma_\chi(N)=\sum_{n\leq N}\mu^2(n)\chi(n).$$ Then \begin{equation*}
    \sigma_{\chi}(N)=\begin{dcases}
        N\cdot\frac{\eta(m)}{\zeta(2)}+O\left(N^{3/5+\epsilon}m^\epsilon\right) &\text{if $\chi$ is the principal character,}\\
        O\left(N^{3/5+\epsilon}m^\epsilon m_0^{1/5+\epsilon}\right) &\text{otherwise}.
    \end{dcases}
\end{equation*} Here $\eta(m)=\prod_{p\mid m}\frac{p}{p+1}$.
\end{lem}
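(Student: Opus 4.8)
The plan is to strip the square-free weight using the classical identity $\mu^2(n)=\sum_{d^2\mid n}\mu(d)$ and thereby reduce the whole statement to incomplete sums of $\chi$ over intervals. Writing $n=d^2k$ and using that $\chi$ is real, so that $\chi(d^2)=\chi(d)^2=\ONE\{(d,m)=1\}$, one obtains
\begin{equation*}
    \sigma_\chi(N)=\sum_{\substack{d\leq\sqrt N\\(d,m)=1}}\mu(d)\,S_\chi\!\left(\frac{N}{d^2}\right),\qquad S_\chi(x):=\sum_{k\leq x}\chi(k).
\end{equation*}
Everything then follows from a suitable estimate for $S_\chi(x)$ plugged into this sum over $d$.

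For the principal character I would use the standard count $S_{\chi_0}(x)=\#\{k\leq x:(k,m)=1\}=\frac{\varphi(m)}{m}x+O(2^{\omega(m)})$. Substituting this, extending the $d$-sum from $d\leq\sqrt N$ to all of $\NN$ at the cost of an error $O(2^{\omega(m)}\sqrt N)$, and simplifying the resulting Euler product through
\begin{equation*}
    \frac{\varphi(m)}{m}\sum_{\substack{d\geq1\\(d,m)=1}}\frac{\mu(d)}{d^2}=\frac{1}{\zeta(2)}\cdot\frac{\varphi(m)}{m}\prod_{p\mid m}\frac{1}{1-p^{-2}}=\frac{1}{\zeta(2)}\prod_{p\mid m}\frac{p}{p+1}=\frac{\eta(m)}{\zeta(2)},
\end{equation*}
gives $\sigma_{\chi_0}(N)=\frac{\eta(m)}{\zeta(2)}N+O(2^{\omega(m)}\sqrt N)$; since $2^{\omega(m)}\ll m^\epsilon$ and $\sqrt N\ll N^{3/5}$, this is the claimed formula.

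For a non-principal $\chi$ induced by the primitive character $\chi^\ast$ modulo $m_0$, I would first bound $S_\chi(x)$. Since $\chi(k)=\chi^\ast(k)\ONE\{(k,m)=1\}$, expanding the coprimality condition by M\"obius over the (squarefree) divisors of $\rad(m)$ gives $S_\chi(x)=\sum_{e\mid\rad(m)}\mu(e)\chi^\ast(e)\sum_{j\leq x/e}\chi^\ast(j)$, and each inner sum is $\ll\sqrt{m_0}\log m_0$ by P\'olya-Vinogradov applied to $\chi^\ast$, so $S_\chi(x)\ll 2^{\omega(m)}\sqrt{m_0}\log m_0\ll m^\epsilon m_0^{1/2+\epsilon}$. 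Then I would feed this, together with the trivial bound $|S_\chi(x)|\leq x$, into the $d$-sum above, splitting the range of $d$ at $D:=N^{1/2}m_0^{-1/4}$: the part $d\leq D$ contributes $\ll D\cdot m^\epsilon m_0^{1/2+\epsilon}=N^{1/2}m_0^{1/4+\epsilon}m^\epsilon$ via P\'olya-Vinogradov, and the part $D<d\leq\sqrt N$ contributes $\ll\sum_{d>D}N/d^2\ll N/D=N^{1/2}m_0^{1/4}$ trivially. This is legitimate precisely when $1\leq D\leq\sqrt N$, i.e.\ $m_0\leq N^2$, and in that range $N^{1/2}m_0^{1/4}=N^{3/5}m_0^{1/5}\cdot(m_0/N^2)^{1/20}\leq N^{3/5}m_0^{1/5}$, so the total is $\ll N^{3/5+\epsilon}m^\epsilon m_0^{1/5+\epsilon}$. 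In the complementary range $m_0>N^2$ the trivial bound $|\sigma_\chi(N)|\leq\sum_{n\leq N}1\leq N\leq N^{3/5}m_0^{1/5}$ already suffices, completing the case.

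I do not anticipate a genuine obstacle: this is the standard square-free-sieve-plus-P\'olya-Vinogradov argument. The only steps needing a little care are the reduction of $S_\chi$ from modulus $m$ down to the primitive modulus $m_0$ (so that the conductor, not the modulus, appears in the bound), and the dichotomy on the size of $m_0$ relative to $N$, which is exactly what is needed to keep the P\'olya-Vinogradov contribution $N^{1/2}m_0^{1/4}$ underneath the target exponent $N^{3/5}m_0^{1/5}$. One could replace P\'olya-Vinogradov by the Burgess bound for $\chi^\ast$ to sharpen the exponents, but this weaker form is all that is used elsewhere in the paper.
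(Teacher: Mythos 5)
Your argument is correct: the square-free sieve $\mu^2(n)=\sum_{d^2\mid n}\mu(d)$, the reduction of the incomplete character sum to the primitive modulus $m_0$, P\'olya--Vinogradov, and the dichotomy $m_0\lessgtr N^2$ all check out, and in the principal case you even obtain the sharper error $O(N^{1/2}m^\epsilon)$. The paper itself gives no proof here --- it imports the lemma verbatim from Zubrilina (Lemma 6.7) --- and your proof is essentially the standard argument used there, so there is nothing further to reconcile.
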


\begin{lem}[Lemma 6.5 of \cite{Zubrilina}, p.29]\label{lem:sum_of_eta(2m)}
We have $$\sum_{m\leq T}\frac{\eta(2m)}{m^2}=\frac{8A}{11}+O\left(T^{-1}\right),$$ where $$A=\prod_p\left(1+\frac{p}{(p+1)^2(p-1)}\right)$$ as in \ref{eqn:A_definition}.
\end{lem}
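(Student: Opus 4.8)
The plan is to replace the partial sum by the complete Dirichlet series plus a trivially bounded tail, and then evaluate the complete series through its Euler product. The starting observation is that $m\mapsto\eta(2m)$ is itself \emph{not} multiplicative---the factor at $2$ is present regardless of $m$---but it becomes so once that factor is pulled out: since $2\mid 2m$ always,
$$\eta(2m)=\frac{2}{3}\prod_{\substack{p\mid m\\ p\ \mathrm{odd}}}\frac{p}{p+1}=\frac{2}{3}\,h(m),$$
where $h$ is the multiplicative function determined by $h(2^k)=1$ for all $k\ge 0$ and $h(p^k)=\frac{p}{p+1}$ for odd primes $p$ and $k\ge 1$.

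First I would dispose of the tail: since $0\le\eta(2m)\le 1$, one has $\sum_{m>T}\eta(2m)m^{-2}\ll\sum_{m>T}m^{-2}\ll T^{-1}$, so that $\sum_{m\le T}\eta(2m)m^{-2}=\sum_{m=1}^\infty\eta(2m)m^{-2}+O(T^{-1})$. Next I would evaluate the complete series $\sum_{m=1}^\infty\eta(2m)m^{-2}=\frac{2}{3}\sum_{m=1}^\infty h(m)m^{-2}$, which converges absolutely because $h$ is bounded, via its Euler product. The local factor at $2$ is $\sum_{k\ge 0}4^{-k}=\frac{4}{3}$, and the local factor at an odd prime $p$ is
$$1+\frac{p}{p+1}\sum_{k\ge 1}\frac{1}{p^{2k}}=1+\frac{p}{p+1}\cdot\frac{1}{(p-1)(p+1)}=1+\frac{p}{(p+1)^2(p-1)},$$
so $\sum_{m=1}^\infty h(m)m^{-2}=\frac{4}{3}\prod_{p>2}\bigl(1+\frac{p}{(p+1)^2(p-1)}\bigr)$.

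Finally I would reconcile the restricted product with the product defining $A$: the $p=2$ factor of $A$ equals $1+\frac{2}{9}=\frac{11}{9}$, hence $\prod_{p>2}\bigl(1+\frac{p}{(p+1)^2(p-1)}\bigr)=\frac{9A}{11}$, and therefore
$$\sum_{m=1}^\infty\frac{\eta(2m)}{m^2}=\frac{2}{3}\cdot\frac{4}{3}\cdot\frac{9A}{11}=\frac{8A}{11}.$$
Combined with the tail bound this gives the lemma. There is no real obstacle: the whole argument is a bounded multiplicative-function computation, and the only steps deserving attention are the non-multiplicativity of $\eta(2\cdot)$, which forces one to isolate the prime $2$ before taking the Euler product, and the bookkeeping that turns the product over $p>2$ back into the full product $A$.
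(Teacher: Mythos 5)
Your proof is correct, and since the paper simply quotes this as Lemma 6.5 of Zubrilina without reproducing a proof, your self-contained Euler-product computation is exactly the standard argument one would expect: isolate the prime $2$ to restore multiplicativity, bound the tail trivially by $\sum_{m>T}m^{-2}\ll T^{-1}$, and reconcile the odd-prime product with $A$ via the factor $\frac{11}{9}$ at $p=2$. All the local factors and the final bookkeeping $\frac{2}{3}\cdot\frac{4}{3}\cdot\frac{9}{11}=\frac{8}{11}$ check out.
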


\subsubsection{Miscellaneous}
\begin{lem}\label{lem:sum_of_(x^2-a/p)_over_residue_class}
Let $p$ be an odd prime and $a$ an integer with $p\nmid a$. Then $$\sum_{x\,\mathrm{mod}\,p}\Legendre{x^2-a}{p}=-1.$$
\end{lem}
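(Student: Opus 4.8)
The plan is to prove this by counting solutions to a congruence. First I would invoke the elementary identity that, for every integer $n$, the number of $y\bmod p$ with $y^2\equiv n\pmod{p}$ equals $1+\Legendre{n}{p}$, where we use the convention $\Legendre{0}{p}:=0$; this is immediate from the definition of the Legendre symbol when $p\nmid n$, and holds trivially (with both sides equal to $1$) when $p\mid n$. Applying this with $n=x^2-a$ and summing over $x\bmod p$ converts the character sum into a point count:
\[
\sum_{x\bmod p}\Legendre{x^2-a}{p}=\#\bigl\{(x,y)\in(\ZZ/p\ZZ)^2:y^2-x^2\equiv-a\pmod{p}\bigr\}-p.
\]

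Next I would factor $y^2-x^2=(y-x)(y+x)$ and perform the linear change of variables $u=y-x$, $v=y+x$. Since $p$ is odd, $2$ is invertible modulo $p$, so $(x,y)\mapsto(u,v)$ is a bijection of $(\ZZ/p\ZZ)^2$ onto itself; hence the point count above equals $\#\{(u,v)\in(\ZZ/p\ZZ)^2:uv\equiv-a\pmod{p}\}$. Because $p\nmid a$, the residue $-a$ is a unit, so $u$ must be one of the $p-1$ units and then $v\equiv-au^{-1}$ is uniquely determined; thus there are exactly $p-1$ such pairs. Therefore the character sum equals $(p-1)-p=-1$, which is the claim.

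I do not expect a genuine obstacle: this is the standard evaluation of a quadratic character sum $\sum_{x\bmod p}\Legendre{\alpha x^2+\beta x+\gamma}{p}$, which equals $-\Legendre{\alpha}{p}$ precisely when the discriminant $\beta^2-4\alpha\gamma$ is a unit modulo $p$. The only points warranting a word of care are the convention $\Legendre{0}{p}=0$ used in the counting identity, and the observation that $p$ being odd is exactly what makes $2$ invertible for the substitution (and also forces $-a\not\equiv0$); both are immediate.
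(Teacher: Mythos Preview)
Your proof is correct. The approach differs from the paper's: the paper first observes that the sum depends only on the square class of $a$, evaluates the case $a=1$ directly via the substitution $x=y^{-1}+1$ (which turns $\Legendre{(x-1)(x+1)}{p}$ into a full character sum minus one term), and then handles non-square $a$ by exploiting the vanishing of the double sum $\sum_a\sum_x\Legendre{x^2-a}{p}=0$. Your route---inflating the character sum to a point count on the affine conic $y^2-x^2=-a$ and then diagonalizing via $(u,v)=(y-x,y+x)$---treats all units $a$ uniformly in one stroke, with no square/non-square case split; it is the standard textbook argument and arguably the cleanest. The paper's method avoids introducing a second variable but pays for it with a two-step case analysis. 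Both arguments use that $p$ is odd in an essential way: you need $2^{-1}$ for the change of variables, and the paper needs it in its substitution step.
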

\begin{proof}
Note that $\sum_x\Legendre{x^2-a}{p}=0$ only depends on square class of $a$ mod $p$ by change of variables, and \begin{align*}
    \sum_x\left(\frac{x^2-1}{p}\right)&=\sum_{x\neq 1}\Legendre{(x+1)(x-1)^{-1}}{p}\\
    &=\sum_{y\neq 0}\Legendre{1+2y}{p} &&(x=y^{-1}+1)\\
    &=\sum_{z\neq1}\Legendre{z}{p} &&(y=2^{-1}(z-1))\\
    &=-1.
\end{align*} Thus $\sum_x\Legendre{x^2-\square}{p}=-1$. Moreover, \begin{align*}
    0&=\sum_x\sum_a\Legendre{x^2-a}{p}\\
    &=\sum_a\sum_x\Legendre{x^2-a}{p}\\
    &=\sum_{a=0}\sum_x\Legendre{x^2-a}{p}+\sum_{a=\square}\sum_x\Legendre{x^2-a}{p}+\sum_{a=\notsquare}\sum_x\Legendre{x^2-a}{p}\\
    &=(p-1)+\frac{p-1}{2}(-1)+\frac{p-1}{2}\sum_x\Legendre{x^2-\notsquare}{p},
\end{align*} so $\sum_x\Legendre{x^2-\notsquare}{p}=-1$ as well.
\end{proof}

\subsection{Numerics}\label{B:numerics}
Using \ref{eqn:murmuration_function_and_density_relation}, one can integrate the murmuration density $M(\Xi)$ given in \ref{eqn:murmuration_density} to obtain murmuration functions $M_\Phi(\Xi)$ for any weight function $\Phi$: $$M_\Phi(\Xi)=\frac{\int_0^\infty M(\Xi/u)\Phi(u)u^{1/2}\d u}{\int_0^\infty\Phi(u)u^{1/2}\d u}.$$ Computer numerics can then be conducted to verify the correctness of our computations. This is most easily done for the characteristic function $\Phi=\chi_{[1,2]}$, for which we numerically compute the average $$G(p,N)=\frac{\sum_{\substack{f\in\msF\\N_f\in[N,2N]}}a_f(p)\sqrt{p}}{\sum_{\substack{f\in\msF\\N_f\in[N,2N]}}1},$$ as well as with a second average over $p$. We present the resulting plots in this section.

\subsubsection{Plots of the Vertical Average $G(p,N)$ for Large $N$}
First we simply plot $G(p,N)$ as a function of $\xi=p/N$ for large $N$ (here $N=2^{19}$) up to some range. As one can see in the following images, there is arithmetic dependence of $G(p,N)$ on $p$, both in the sense of $\delta_y(p)$ and of $c(p)$, which appear in our analysis.

\begin{figure}[H]
    \centering
    \begin{subfigure}{\textwidth}
        \includegraphics[width=\textwidth]{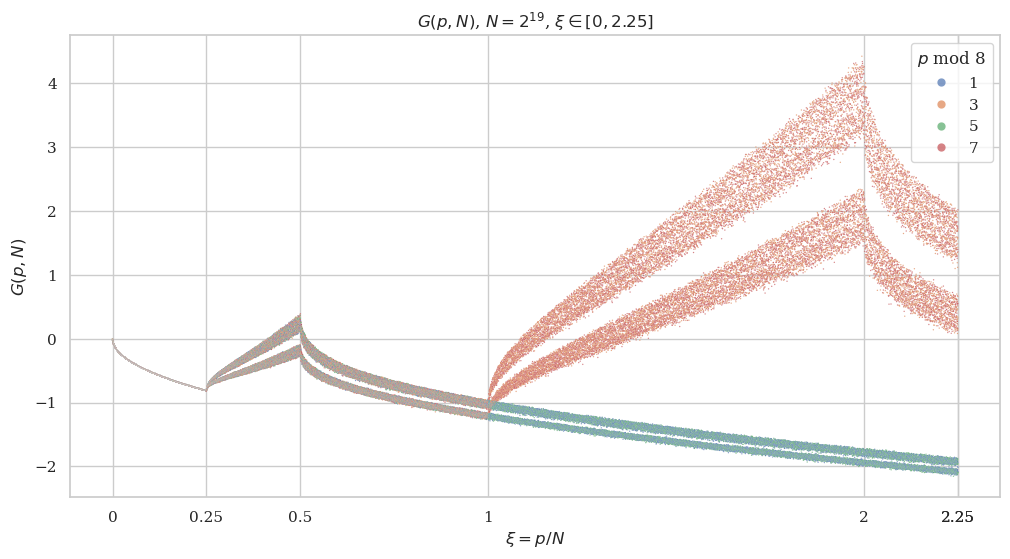}
        \caption{Primes colored modulo 8. One can see that the 3-mod-4 primes have $G_{\num,2}^+$ contribution while the 1-mod-4 primes do not; this corresponds to $\delta_2(p)=\ONE\{p\equiv3\Mod{4}\}$ in our analysis.}
    \end{subfigure}
\end{figure}

\begin{figure}[H]
    \centering
    \begin{subfigure}{\textwidth}
        \includegraphics[width=\textwidth]{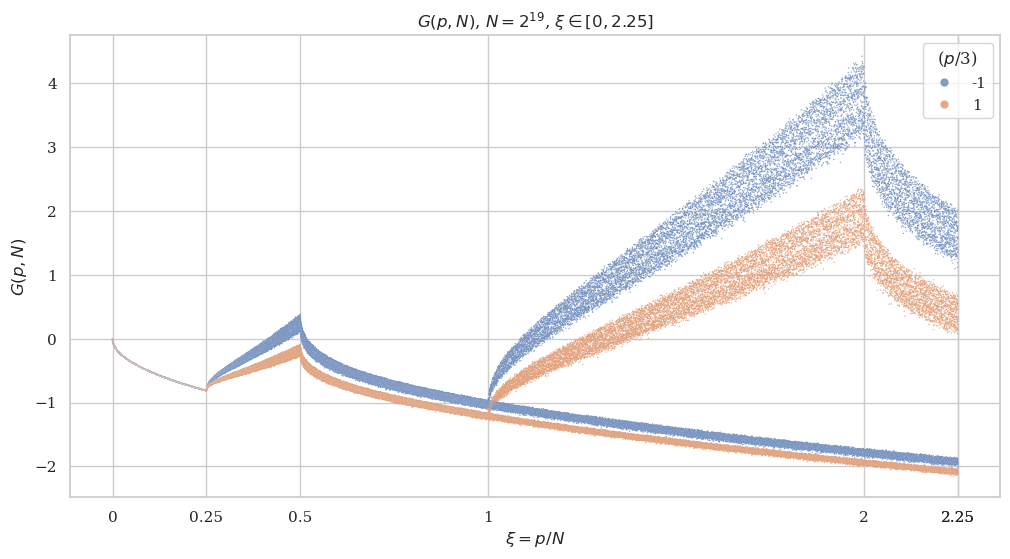}
        \setcounter{subfigure}{1}
        \caption{Primes colored modulo 3. One can see that $G_{\num}^+$ is larger for the non-squares modulo 3; this corresponds to the local factor $1-2\ell^{-3}-\frac{2\ell^{-3}}{1-\ell^{-2}}$ of $c(p)$ at $\ell=3$.}
    \end{subfigure}
    \begin{subfigure}{\textwidth}
        \includegraphics[width=\textwidth]{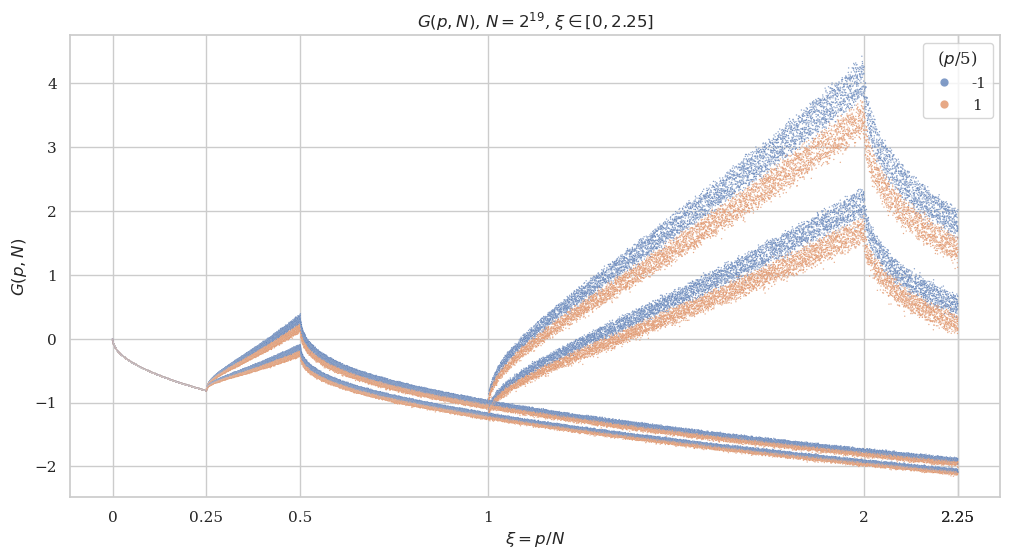}
        \setcounter{subfigure}{2}
        \caption{Primes colored modulo 5. One can further see the effect of the local factor $1-2\ell^{-3}-\frac{2\ell^{-3}}{1-\ell^{-2}}$ of $c(p)$ at $\ell=5$.}
    \end{subfigure}
    \setcounter{figure}{2}
    \caption{$G(p,N)$ vs $\xi=p/N$ up to $\xi=9/4$ for $N=2^{19}$, with $p$ colored according to different moduli.}
\end{figure}

\begin{figure}[H]
    \centering
    \begin{subfigure}{\textwidth}
        \includegraphics[width=\textwidth]{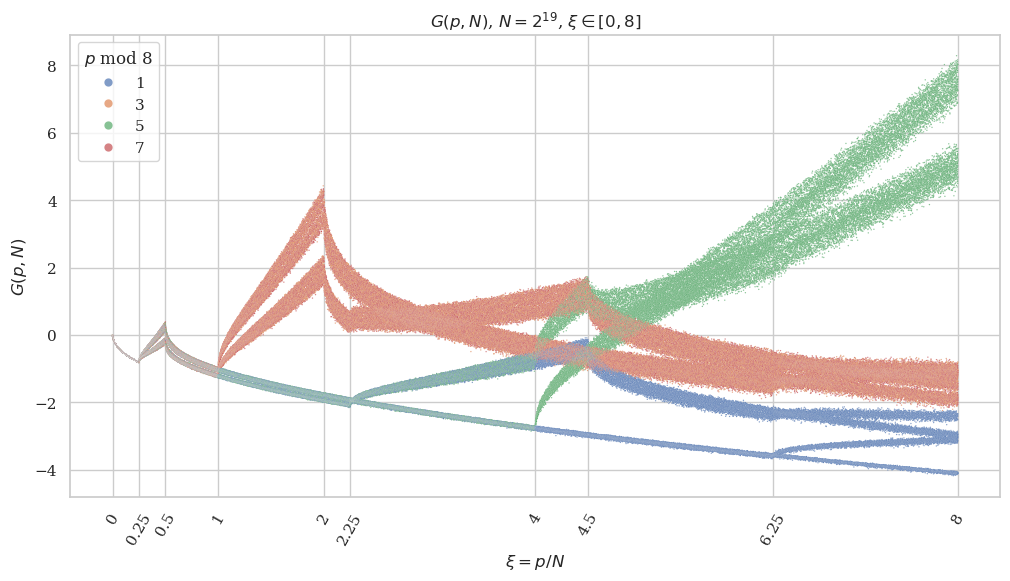}
        \setcounter{subfigure}{0}
        \caption{Primes colored modulo 8. One can further see the effect of $\delta_4(y)=\ONE\{p\equiv5\Mod{8}\}$ for the term entering at $\xi=4$.}
    \end{subfigure}
    \begin{subfigure}{\textwidth}
        \includegraphics[width=\textwidth]{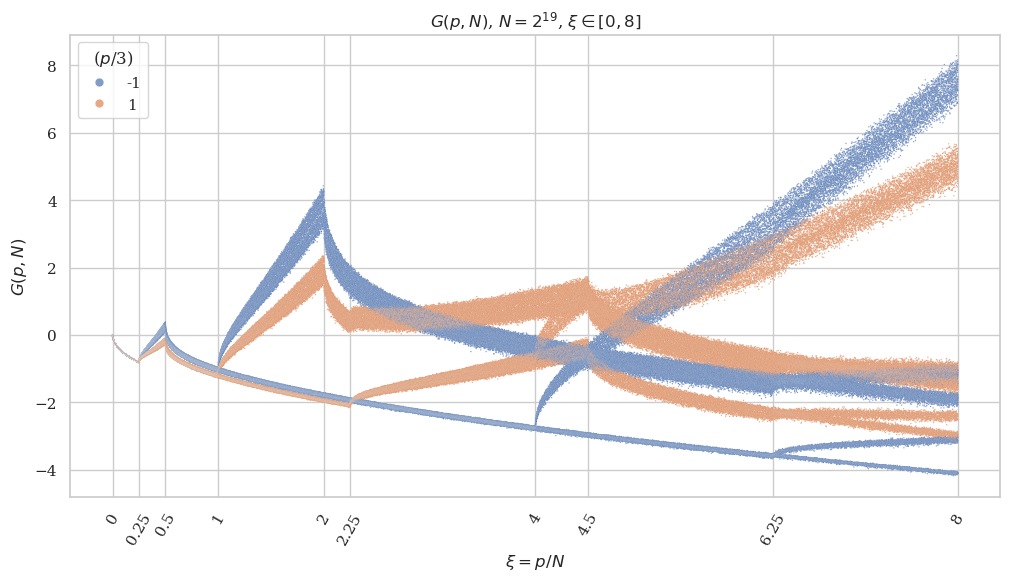}
        \setcounter{subfigure}{1}
        \caption{Primes colored modulo 3. One can further see the effect of $\delta_3(y)=\ONE\{(p/3)=1\}$ for the term entering at $\xi=9/4$.}
    \end{subfigure}
\end{figure}

\begin{figure}[H]
    \centering
    \begin{subfigure}{\textwidth}
        \includegraphics[width=\textwidth]{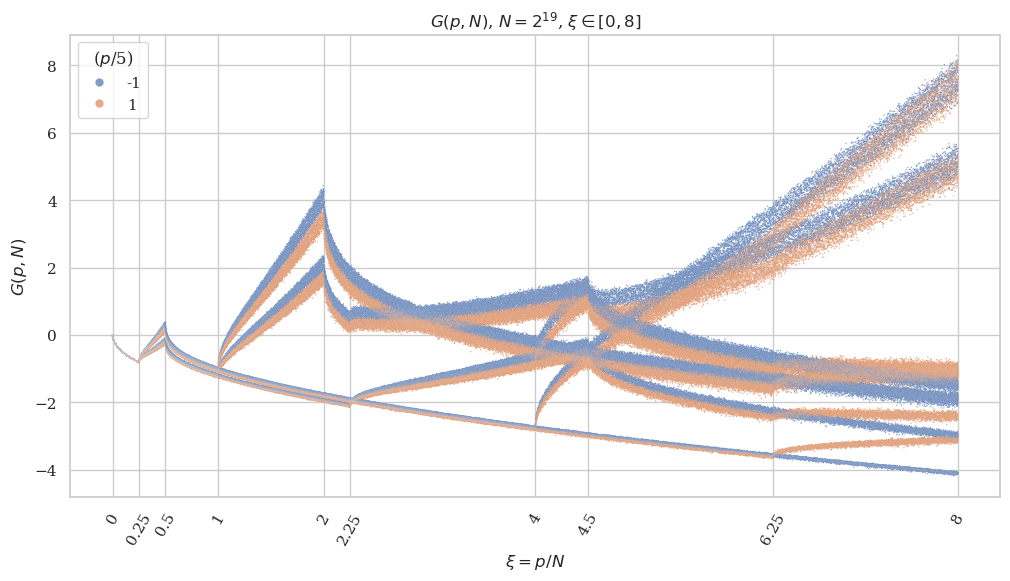}
        \setcounter{subfigure}{2}
        \caption{Primes colored modulo 5. One can further see the effect of $\delta_5(y)=\ONE\{(p/5)=1\}$ for the term entering at $\xi=25/4$.}
    \end{subfigure}
    \setcounter{figure}{3}
    \caption{$G(p,N)$ vs $\xi=p/N$ up to $\xi=8$ for $N=2^{19}$, with $p$ colored according to different moduli.}
\end{figure}

\subsubsection{Plots of the Vertical Average $G(p,N)$ for Different $N$}
Modulo the arithmetic independence of $G(p,N)$ on $p$ (no pun intended), it is a function of $\xi=p/N$ only. That is, we should observe ``scale invariance'' as we vary $N$. Indeed this phenomenon is very pronounced, as shown in the next few pages.

\begin{figure}[H]
    \centering
    \begin{subfigure}{\textwidth}
        \includegraphics[width=\textwidth]{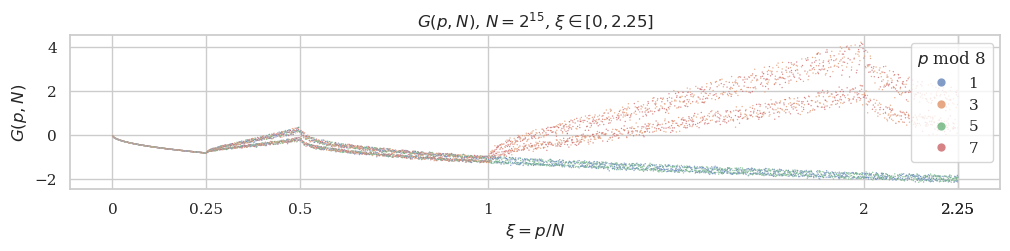}
    \end{subfigure}
    \begin{subfigure}{\textwidth}
        \includegraphics[width=\textwidth]{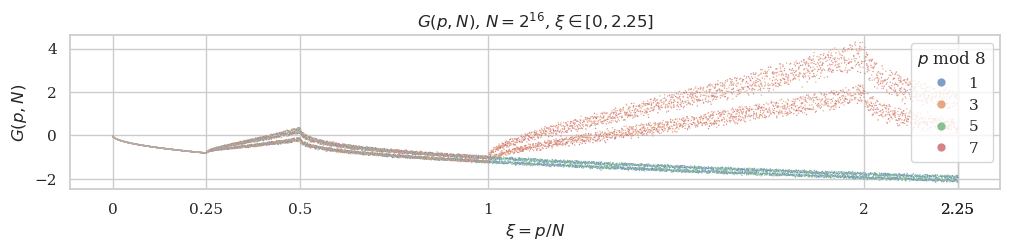}
    \end{subfigure}
    \begin{subfigure}{\textwidth}
        \includegraphics[width=\textwidth]{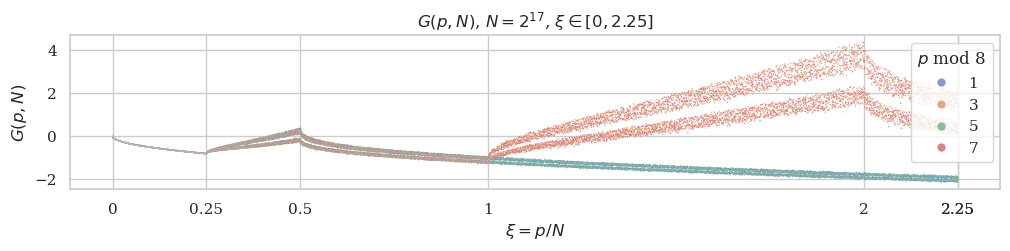}
    \end{subfigure}
    \begin{subfigure}{\textwidth}
        \includegraphics[width=\textwidth]{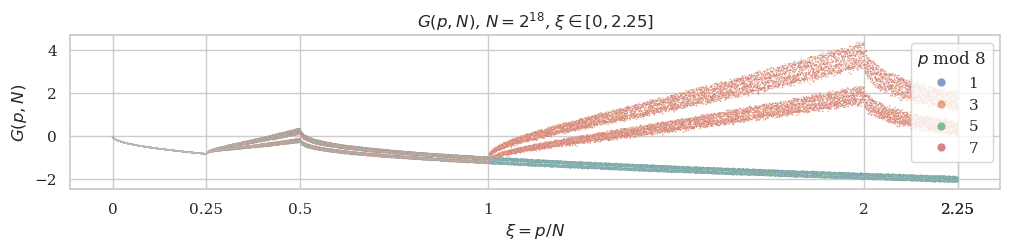}
    \end{subfigure}
    \begin{subfigure}{\textwidth}
        \includegraphics[width=\textwidth]{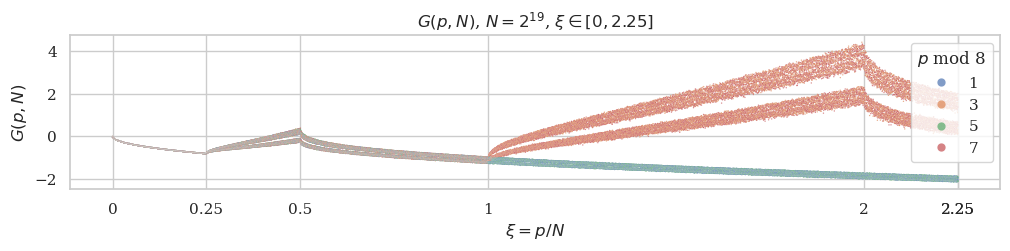}
    \end{subfigure}
    \caption{$G(p,N)$ vs $\xi=p/N$ up to $\xi=9/4$ for $N=2^k$ and $15\leq k\leq 19$, with $p$ colored modulo 8.}
\end{figure}

\begin{figure}[H]
    \centering
    \begin{subfigure}{\textwidth}
        \includegraphics[width=\textwidth]{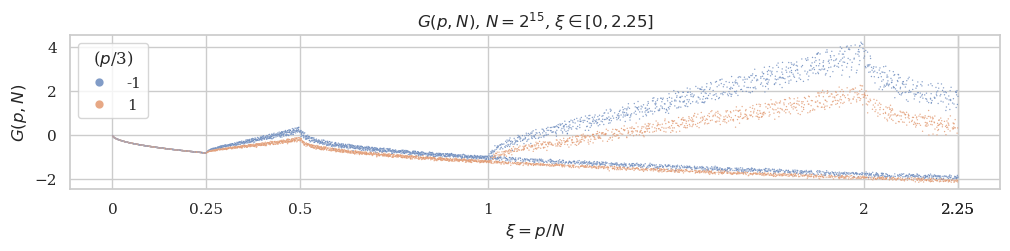}
    \end{subfigure}
    \begin{subfigure}{\textwidth}
        \includegraphics[width=\textwidth]{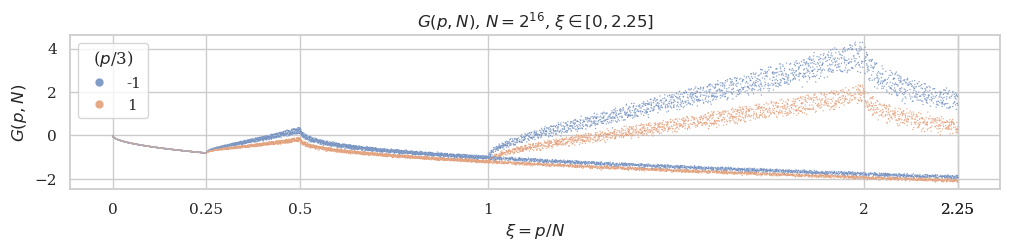}
    \end{subfigure}
    \begin{subfigure}{\textwidth}
        \includegraphics[width=\textwidth]{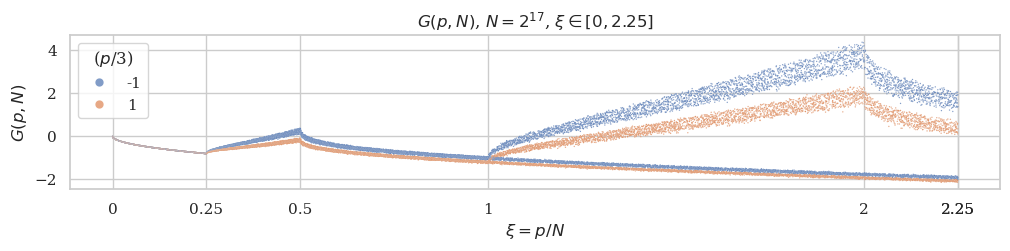}
    \end{subfigure}
    \begin{subfigure}{\textwidth}
        \includegraphics[width=\textwidth]{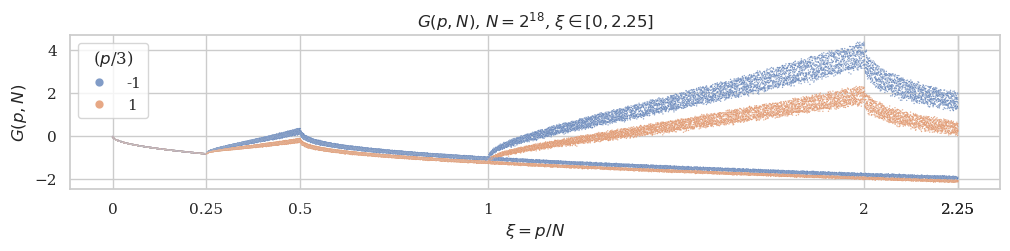}
    \end{subfigure}
    \begin{subfigure}{\textwidth}
        \includegraphics[width=\textwidth]{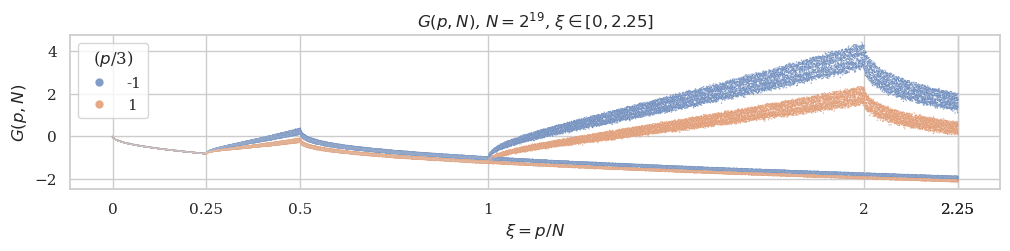}
    \end{subfigure}
    \caption{$G(p,N)$ vs $\xi=p/N$ up to $\xi=9/4$ for $N=2^k$ and $15\leq k\leq 19$, with $p$ colored modulo 3.}
\end{figure}

\subsubsection{Averaging over $p$}
Now we plot $$G_{\avg}(P,N)=\Exp_{p\in[P,P+H]}G(p,N)$$ with $H=P^{0.51}$ (in our analysis we require $H\gg P^{1/2+\delta}$ for arbitrary $\delta>0$ \TODO). We expect this to match the murmuration function $M_\Phi(\Xi)$ obtained from integrating the murmuration density, which is indeed verified by numerics. 



\begin{figure}[H]
    \centering
    \begin{subfigure}{0.95\textwidth}
        \includegraphics[width=\textwidth]{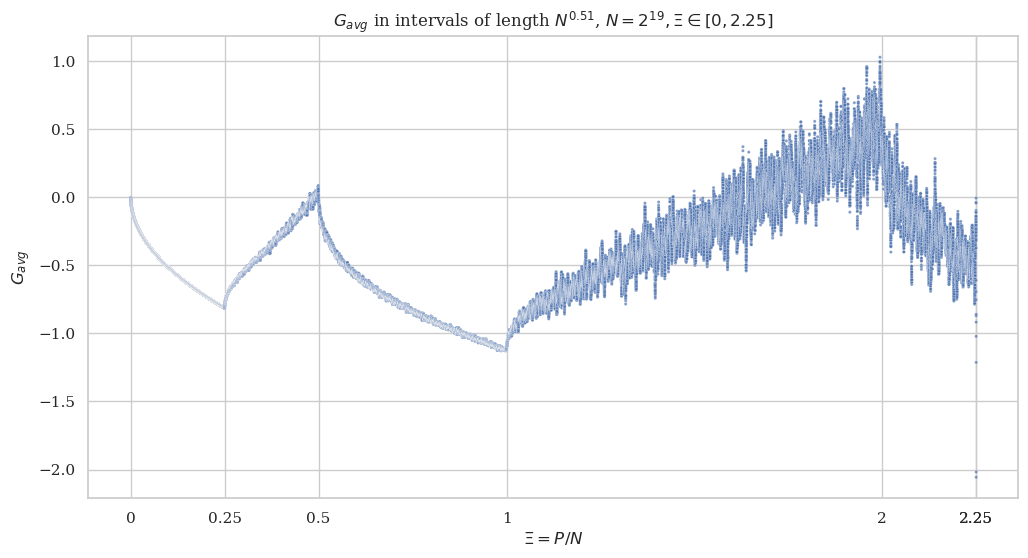}
        \caption{$G_{\avg}(P,N)$ only.}
    \end{subfigure}
    \begin{subfigure}{0.95\textwidth}
        \includegraphics[width=\textwidth]{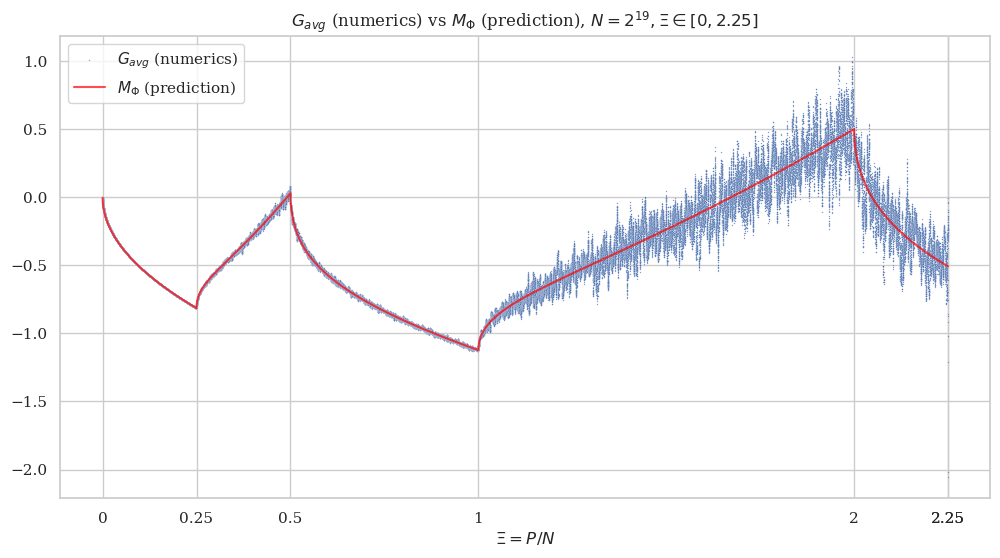}
        \caption{$G_{\avg}(P,N)$ with $M_\Phi(\Xi)$}
    \end{subfigure}
    \caption{$G_{\avg}(P,N)$ vs $\Xi=P/N$ up to $\Xi=9/4$ for $N=2^{19}$.}
\end{figure}

\begin{figure}[htbp]
    \centering
    \begin{subfigure}{\textwidth}
        \includegraphics[width=\textwidth]{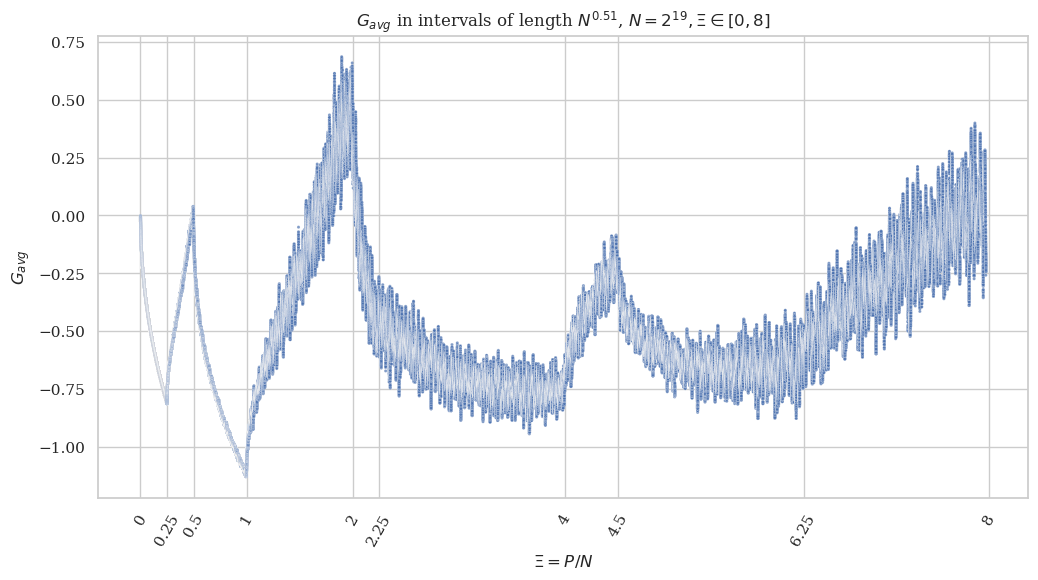}
        \caption{$G_{\avg}(P,N)$ only.}
    \end{subfigure}
    \begin{subfigure}{\textwidth}
        \includegraphics[width=\textwidth]{pics/murmuration_function/murmuration_function,gamma_to_8_with_pred.png}
        \caption{$G_{\avg}(P,N)$ with $M_\Phi(\Xi)$}
    \end{subfigure}
    \caption{$G_{\avg}(P,N)$ vs $\Xi=P/N$ up to $\Xi=8$ for $N=2^{19}$.}
\end{figure}

\clearpage
\subsubsection{Asymptotic Behavior of $M_\Phi(\Xi)$}
Plotting $M_\Phi(\Xi)$ shows that it converges to $-\frac{1}{2}$ as $\Xi\to\infty$, consistent with the 1-level density conjecture.

\begin{figure}[H]
    \centering
    \includegraphics[width=\textwidth]{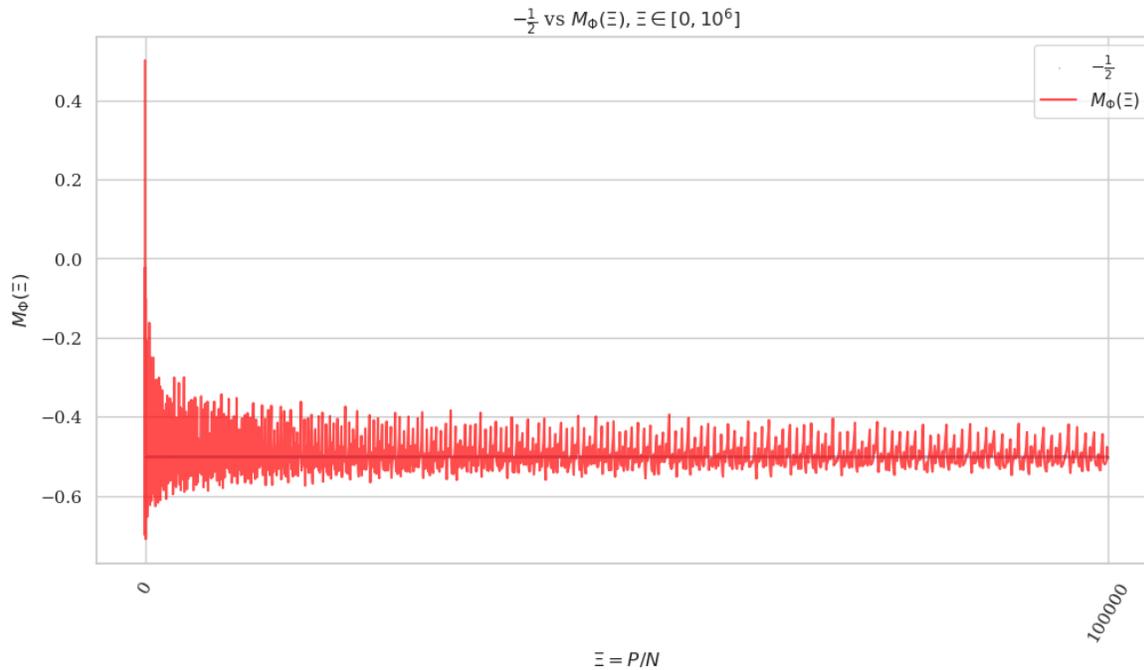}
    \caption{$M_\Phi(\Xi)$ for $\Xi=P/N$ up to $10^6$.}
\end{figure}

\clearpage
\printbibliography

@misc{Zubrilina,
      title={Murmurations}, 
      author={Nina Zubrilina},
      year={2023},
      eprint={2310.07681},
      archivePrefix={arXiv},
      primaryClass={math.NT}
}

@article{Sarnak,
title = {Class numbers of indefinite binary quadratic forms II},
journal = {Journal of Number Theory},
volume = {21},
number = {3},
pages = {333-346},
year = {1985},
issn = {0022-314X},
doi = {https://doi.org/10.1016/0022-314X(85)90060-5},
url = {https://www.sciencedirect.com/science/article/pii/0022314X85900605},
author = {Peter C. Sarnak},
abstract = {We obtain an asymptotic formula for the averages of class numbers of indefinite binary quadratic forms taken along certain polynomial sequences. It is found that along these sequences the class number is essentially as large as possible.}
}

@book{Ayoub,
    author = {Raymond Ayoub},
    title = {An Introduction to the Analytic Theory of Numbers},
    publisher = {American Mathematical Society},
    year = {1963},
}

@book{Davenport,
    author = {Harold Davenport},
    title = {Multiplicative Number Theory},
    publisher = {Springer New York, NY},
    year = {1980},
    doi = {https://doi.org/10.1007/978-1-4757-5927-3}, 
    issn = {0072-5285}
}

@book{Iwaniec-Kowalski,
    author = {Henryk Iwaniec and Emmanuel Kowalski},
    title = {Analytic Number Theory},
    publisher = {American Mathematical Society Colloquium Publications},
    year = {2004},
    doi = {https://doi.org/10.1090/coll/053}, 
}

@book{Titchmarsh,
    author = {Edward C. Titchmarsh},
    title = {The Theory of Functions},
    publisher = {Oxford University Press},
    year = {1939}
}

@misc{HLOP,
      title={Murmurations of elliptic curves}, 
      author={Yang-Hui He and Kyu-Hwan Lee and Thomas Oliver and Alexey Pozdnyakov},
      year={2022},
      eprint={2204.10140},
      archivePrefix={arXiv},
      primaryClass={math.NT}
}

@misc{HLOPS,
      title={In preparation}, 
      author={Yang-Hui He and Kyu-Hwan Lee and Thomas Oliver and Alexey Pozdnyakov and Andrew Sutherland},
}

@misc{SarnakLetter,
    title={Letter to Drew Sutherland and Nina Zubrilina on murmurations and root numbers},
    author={Peter C. Sarnak},
    url={https://publications.ias.edu/sarnak/paper/2726},
    year={2023},
}

@article{Katz-Sarnak,
  title={Zeroes of zeta functions and symmetry},
  author={Nicholas M. Katz and Peter C. Sarnak},
  journal={Bulletin of the American Mathematical Society},
  year={1999},
  volume={36},
  pages={1-26},
  url={https://api.semanticscholar.org/CorpusID:16213113}
}

@misc{Serre,
    title={Algebraic number fields (L-functions and Galois properties)},
    author={Jean-Pierre Serre},
    year={1975},
}

@article{Friedlander-Iwaniec,
  title={Square-free values of quadratic polynomials},
  author={John B. Friedlander and Henryk Iwaniec},
  journal={Proceedings of the Edinburgh Mathematical Society},
  year={2010},
  volume={53},
  pages={385 - 392},
  url={https://api.semanticscholar.org/CorpusID:122348656}
}

@article{Fouvry-Iwaniec,
author = {Etienne Fouvry and Henryk Iwaniec},
title = {{Low-lying zeros of dihedral L-functions}},
volume = {116},
journal = {Duke Mathematical Journal},
number = {2},
publisher = {Duke University Press},
pages = {189 -- 217},
year = {2003},
doi = {10.1215/S0012-7094-03-11621-X},
URL = {https://doi.org/10.1215/S0012-7094-03-11621-X}
}

@article{Bombieri,
author = {Bombieri, E.},
title = {On the large sieve},
journal = {Mathematika},
volume = {12},
number = {2},
pages = {201-225},
doi = {https://doi.org/10.1112/S0025579300005313},
url = {https://londmathsoc.onlinelibrary.wiley.com/doi/abs/10.1112/S0025579300005313},
year = {1965}
}

@article{LOP_dirichlet,
      title={Murmurations of Dirichlet characters}, 
      author={Kyu-Hwan Lee and Thomas Oliver and Alexey Pozdnyakov},
      year={2024},
      eprint={2307.00256},
      archivePrefix={arXiv},
      primaryClass={math.NT},
      url={https://arxiv.org/abs/2307.00256}, 
}

@article{BBLLD_modular_forms,
      title={Murmurations of modular forms in the weight aspect}, 
      author={Jonathan Bober and Andrew R. Booker and Min Lee and David Lowry-Duda},
      year={2023},
      eprint={2310.07746},
      archivePrefix={arXiv},
      primaryClass={math.NT},
      url={https://arxiv.org/abs/2310.07746}, 
}

@article{BLLDSHZ_Maass,
      title={Murmurations of Maass forms}, 
      author={Andrew R. Booker and Min Lee and David Lowry-Duda and Andrei Seymour-Howell and Nina Zubrilina},
      year={2024},
      eprint={2409.00765},
      archivePrefix={arXiv},
      primaryClass={math.NT},
      url={https://arxiv.org/abs/2409.00765}, 
}

\end{document}